\newcommand{\op}{\ensuremath{^{\mathrm{op}}}}
\newcommand{\bA}{\mathbb{A}}
\newcommand{\bZ}{\mathbb{Z}}
\newcommand{\sfS}{\mathsf{S}}
\newcommand{\cA}{{\mathcal A}}
\newcommand{\cB}{{\mathcal B}}
\newcommand{\cC}{{\mathcal C}}
\newcommand{\cD}{{\mathcal D}}
\newcommand{\cE}{{\mathcal E}}
\newcommand{\cF}{{\mathcal F}}
\newcommand{\cG}{{\mathcal G}}
\newcommand{\cP}{{\mathcal P}}
\newcommand{\cX}{{\mathcal X}}
\newcommand{\cY}{{\mathcal Y}}
\newcommand{\ob}{\operatorname{ob}}
\newcommand{\md}{\operatorname{mod}}
\newcommand{\Md}{\operatorname{Mod}}
\newcommand{\im}{\operatorname{im}}
\newcommand{\Proj}{\operatorname{Proj}}
\newcommand{\Gproj}{\cG\cP}
\newcommand{\Mon}{\operatorname{Mon}}
\newcommand{\res}{\operatorname{res}}
\newcommand{\unit}[2]{\alpha^{#1\dashv #2}}
\newcommand{\counit}[2]{\beta^{#1\dashv #2}}
\newcommand{\adjiso}[2]{\phi^{#1\dashv #2}}
\newcommand{\relGproj}[2]{\cG\cP_{#1}(#2 )}
\newcommand{\Discr}[2]{\operatorname{Discr}_{#1}(#2)}
\newcommand{\Mor}{\operatorname{Mor}}
\newcommand{\emphbf}[1]{\emph{\textbf{#1}}}
\newcommand{\pdim}{\operatorname{pdim}}
\newcommand{\Gpdim}{\operatorname{G.pdim}}
\newcommand{\glGpdim}{\operatorname{gl.Gpdim}}
\newcommand{\glGidim}{\operatorname{gl.Gidim}}
\newcommand{\Hom}{\operatorname{Hom}}
\newcommand{\Ext}{\operatorname{Ext}}
\newcommand{\Tor}{\operatorname{Tor}}
\newcommand{\Ker}{\operatorname{Ker}}
\newcommand{\Coker}{\operatorname{Coker}}
\newcommand{\bul}{\bullet}
\title{Gorenstein projective objects in functor categories}
\date{\today}
\keywords{Abelian category; Gorenstein homological algebra; Gorenstein projective modules; Iwanaga-Gorenstein ring}
\author{Sondre Kvamme}
\address{Laboratoire de Math\'ematiques d'Orsay, Univ. Paris-Sud, CNRS, Universit\'e Paris-Saclay, 91405 Orsay, France}
\email{sondre.kvamme@u-psud.fr}
\begin{document}

\newtheorem{Theorem}[equation]{Theorem}
\newtheorem{Lemma}[equation]{Lemma}
\newtheorem{Corollary}[equation]{Corollary}
\newtheorem{Proposition}[equation]{Proposition}

\theoremstyle{definition}
\newtheorem{Definition}[equation]{Definition}
\newtheorem{Example}[equation]{Example}
\newtheorem{Remark}[equation]{Remark}
\newtheorem{Setting}[equation]{Setting}

\thanks{This is part of the authors PhD thesis. The author thanks Jan Geuenich and Julian K\"ulshammer for helpful comments on a previous version of this paper, and the anonymous referee for useful suggestions and comments. The work was made possible by the funding provided by the \emph{Bonn International Graduate School in Mathematics}.}

\subjclass[2010]{18E10, 18G25, 16E65;}

\begin{abstract}
Let $k$ be a commutative ring, let $\cC$ be a small, $k$-linear, Hom-finite, locally bounded category, and let $\cB$ be a $k$-linear abelian category. We construct a Frobenius exact subcategory $\Gproj(\relGproj{P}{\cB^{\cC}})$ of the functor category $\cB^{\cC}$, and we show that it is a subcategory of the Gorenstein projective objects $\Gproj(\cB^{\cC})$ in $\cB^{\cC}$. Furthermore, we obtain criteria for when $\Gproj(\relGproj{P}{\cB^{\cC}})=\Gproj(\cB^{\cC})$. We show in examples that this can be used to compute $\Gproj(\cB^{\cC})$ explicitly.
\end{abstract}

\maketitle

\setcounter{tocdepth}{2}
\numberwithin{equation}{section}
\tableofcontents

\section{Introduction}\label{Introduction}
In homological algebra the projective and injective modules play a central role. The analogue in Gorenstein homological algebra are the Gorenstein projective and Gorenstein injective modules. These were defined by Auslander and Bridger in \cite{AB69} for a two sided Noetherian ring, and were later extended to a general ring in \cite{EJ95}. Nowadays, the field of Gorenstein homological algebra has turned into a well-developed subject and an active area of research, see \cite{EJ11,EJ11a}. Some examples of other papers are \cite{AM02,Bel05,Bel11,BK08,BR07,BM07,EEG08,Hol04,J07,JZ00}. It has also found applications in other areas, see for example \cite{DSS17}. In particular, the Gorenstein projective modules are used when categorifying cluster algebras \cite{JKS16,NCha17,Pre17a}, and being able to describe them is therefore important.

Let $k$ be a commutative ring, let $\cB$ be a $k$-linear abelian category with enough projectives, and let $\cC$ be a small $k$-linear category. Furthermore, let $\cB^{\cC}$ denote the category of $k$-linear functors from $\cC$ to $\cB$.
\begin{Example}\label{Example:I1}
Let $\cC=k\bA_2$ where $k\bA_2$ is the $k$-linearization of the category $\bullet \to \bullet$. The category $\cB^{k\bA_2}$ can then be identified with the morphism category $\Mor(\cB)$ of $\cB$. Since $\Mor(\cB)$ is abelian and has enough projectives, it also has Gorenstein projective objects.  By Corollary 3.6 in \cite{JK11}, a morphism $B_1\xrightarrow{f}B_2$ is Gorenstein projective in $\Mor(\cB)$ if and only if $f$ is a monomorphism and $\Coker f$, $B_1$, and $B_2$ are Gorenstein projective in $\cB$. Since Gorenstein projective objects are closed under kernels of epimorphisms, this is equivalent to only requiring  $\Coker f$ and $B_2$ to be Gorenstein projective. 
\end{Example}
Motivated by this example, one can hope to describe the Gorenstein projective objects in $\cB^{\cC}$ more generally.  Several authors \cite{EEG09,EHS13,HLXZ17, LZ13,LZ17,She16} have studied this problem. However, their descriptions only hold in special cases. In \cite{HLXZ17,LZ13,LZ17,She16} they assume $k$ is a field and $\cC$ is either $kQ$ where $Q$ is a finite acyclic quiver, $kQ/I$ where $I$ is generated by monomial relations, or a finite-dimensional Iwanaga-Gorenstein algebra, while in \cite{EEG09,EHS13} they assume $k=\bZ$ and $\cC=\bZ Q$ for a left rooted quiver $Q$. The latter results has motivated Holm and J\o rgensen to give a description of cotorsion pairs in $\cB^{\bZ Q}$ from cotorsion pairs in $\cB$, see \cite{HJ16}.

We give a more systematic description of the Gorenstein projective objects in $\cB^{\cC}$, which works for any commutative base ring $k$. Since $(\cB^{\cC})\op=(\cB\op)^{\cC\op}$, the dual results for Gorenstein injective objects are obtained by considering the opposite category. We leave the explicit statements of these results to the reader.
 
The first step is to give a suitable generalization of what it means for $f$ to be a monomorphism in Example \ref{Example:I1}. For this we need to assume that $\cC$ is a locally bounded and Hom-finite category, see Definition \ref{Definition:12,5}. The evaluation functor
\[
i^*\colon \cB^{\cC} \to \prod_{c\in \cC}\cB \quad F\to (F(c))_{c\in\cC}
\]
then has a left adjoint $i_!\colon \prod_{c\in \cC}\cB\to \cB^{\cC}$. In \cite{Kva17} it was shown that there exists a \emphbf{Nakayama functor} $\nu\colon \cB^{\cC}\to \cB^{\cC}$ \emphbf{relative to} $i_!\dashv i^*$, see Definition \ref{Nakayama functor for adjoint pair}. This means that the following holds:
\begin{enumerate}
\item $\nu$ has a right adjoint $\nu^-$;
\item The composite $\nu\circ i_!$ is right adjoint to $i^*$;
\item The unit $\lambda$ of the adjunction $\nu\dashv \nu^-$ induces an isomorphism 
\[
\lambda_{i_!((B_c)_{c\in \cC})}\colon i_!((B_c)_{c\in \cC}) \to \nu^-\nu i_!((B_c)_{c\in  \cC})
\]
for all objects $(B_c)_{c\in \cC}\in \prod_{c\in \cC}\cB$. 
\end{enumerate} 

Explicitly, the Nakayama functor is given by the weighted colimit $\nu(F)= \Hom_k(\cC,k)\otimes_{\cC}F$, and in Example \ref{Example:I1} it is just the cokernel functor $\nu(B_1\xrightarrow{f}B_2)= B_2\to \Coker f$. We give another example to illustrate this definition.

\begin{Example}[Example 3.2.6 in \cite{Kva17}]\label{Example introduction}
Let $k$ be a commutative ring, let $\Lambda_1$ be a $k$-algebra which is finitely generated projective as a $k$-module, and let $\Lambda_2$ be a $k$-algebra. If we consider $\Lambda_1$ as a $k$-linear category with one object and with endomorphism ring $\Lambda_1$, we get the identification
\[
(\Lambda_1\otimes_k \Lambda_2)\text{-}\Md = (\Lambda_2\text{-}\Md)^{\Lambda_1}.
\]
In particular, we have an adjoint pair $i_!\dashv i^*$ on $(\Lambda_1\otimes_k \Lambda_2)\text{-}\Md$ and a Nakayama functor $\nu$ relative to $i_!\dashv i^*$. Explicitly, 
\begin{align*}
& i^*\colon (\Lambda_1\otimes_k \Lambda_2)\text{-}\Md\to \Lambda_2\text{-}\Md \quad i^*(M)={}_{\Lambda_2}M \\
& i_!\colon \Lambda_2\text{-}\Md\to (\Lambda_1\otimes_k \Lambda_2)\text{-}\Md \quad i_!(M)=\Lambda_1\otimes_k M \\
& \nu\colon (\Lambda_1\otimes_k \Lambda_2)\text{-}\Md\to (\Lambda_1\otimes_k \Lambda_2)\text{-}\Md \quad \nu(M)=\Hom_k(\Lambda_1,k)\otimes_{\Lambda_1}M
\end{align*}
Note that if $k$ is a field and $\Lambda_2=k$, then we just obtain the classical Nakayama functor for a finite-dimensional algebra.
\end{Example} 
We can now apply the machinery developed in \cite{Kva17}. In particular, we can define the category $\relGproj{P}{\cB^{\cC}}$ of Gorenstein $P$-projective objects where $P=i_!\circ i^*$. Explicitly, $A\in \relGproj{P}{\cB^{\cC}}$ if and only if 
\begin{enumerate}
\item The $i$th left derived functor $L_i\nu(A)$ is $0$ for all $i>0$;
\item The $i$th right derived functor $R^i\nu^-(\nu(A))$ is $0$ for all $i>0$;
\item the unit $\lambda_A\colon A\to \nu^-\nu(A)$ of the adjunction $\nu\dashv \nu^-$ is an isomorphism on $A$.
\end{enumerate} 
See Definition \ref{Definition:7} and Theorem \ref{Theorem:2.5}. In Example \ref{Example introduction} with $k$ a field and $\Lambda_2=k$ the objects in $\relGproj{P}{\cB^{\cC}}$ are precisely the ordinary Gorenstein projective modules. Also, it turns out that for $\cC=k\bA_2$ the Gorenstein $P$-projective objects are precisely the monomorphisms. More generally, for $\cC=kQ$ where $Q$ is a locally bounded acyclic quiver, the Gorenstein $P$-projective objects are precisely the monic representations, see Definition \ref{Definition:13} and Proposition \ref{Proposition:9} part \ref{Proposition:9,2}. 

The next step is to generalize the requirement in Example \ref{Example:I1} that $B_2$ and $\Coker f$ are Gorenstein projective. Since $i^* \nu(B_1\xrightarrow{f} B_2) = (B_2,\Coker f)$, a natural guess would be that the image of $i^*\circ \nu$ must be Gorenstein projective, i.e. that we should consider the category 
\[
\{F\in \cB^{\cC}\mid F\in \relGproj{P}{\cB^{\cC}} \text{ and } i^* \nu(F)\in \prod_{c\in \cC}\Gproj(\cB)\} 
\]
which we denote by $\Gproj(\relGproj{P}{\cB^{\cC}})$. We obtain the following result for this subcategory.

\begin{Theorem}[Theorem \ref{Theorem:3}]\label{Theorem:I2}
Assume $\cB$ is a $k$-linear abelian category with enough projectives and $\cC$ is a small, $k$-linear, locally bounded, and Hom-finite category. Then the subcategory $\Gproj(\relGproj{P}{\cB^{\cC}})$ is an admissible subcategory of $\Gproj(\cB^{\cC})$.
\end{Theorem}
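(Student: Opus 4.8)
The goal has two parts: first, that $\cX:=\Gproj(\relGproj{P}{\cB^{\cC}})$ is contained in $\Gproj(\cB^{\cC})$, and second, that $\cX$ is a Frobenius exact subcategory whose projective-injective objects are exactly the genuine projectives of $\cB^{\cC}$ lying in $\cX$. Throughout I would use the facts already available: by the machinery of \cite{Kva17} (see Theorem \ref{Theorem:2.5}) the category $\relGproj{P}{\cB^{\cC}}$ is itself Frobenius exact with projective-injective objects the $P$-projectives $\add(\im i_!)$; the functor $\nu$ is exact on conflations in $\relGproj{P}{\cB^{\cC}}$ because $L_i\nu$ vanishes there by condition (i); $i^*$ is exact; in the present locally bounded, Hom-finite setting $i_!$ is exact and, being left adjoint to the exact functor $i^*$, preserves projectives; and $\Gproj(\cB)$ is closed under extensions, direct summands, kernels of epimorphisms, and cosyzygies.

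For the containment, let $F\in\cX$ and choose a complete $P$-resolution $T^{\bullet}$ of $F$, whose terms are $P$-projectives $i_!(X^{n})$ with $X^{n}\in\prod_{c\in\cC}\cB$. The plan is to replace $T^{\bullet}$ by a totally acyclic complex of \emph{genuine} projectives: resolve each $X^{n}$ by projectives of $\prod_{c\in\cC}\cB$, apply the exact functor $i_!$, and pass to the total complex, producing an acyclic complex $R^{\bullet}$ of genuine projectives of $\cB^{\cC}$ with $F$ as a cocycle. The only nonformal point is that $R^{\bullet}$ remains $\Hom_{\cB^{\cC}}(-,\proj)$-acyclic. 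This is exactly where the hypothesis $i^*\nu(F)\in\prod_{c\in\cC}\Gproj(\cB)$ enters: together with the adjunction isomorphism for $\Hom_{\cB^{\cC}}(i_!(-),-)$ and the Nakayama identity $\nu^-\nu i_!\cong i_!$, it controls the cosyzygies of $F$, forces $\Ext^{>0}_{\cB^{\cC}}(F,\proj)=0$, and shows that each cosyzygy again lies in $\cX$. I expect this verification of total acyclicity to be the main obstacle, as it is the step that genuinely couples the relative ($P$-) structure with the absolute Gorenstein-projective structure on $\cB$.

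The closure properties are then comparatively formal. Given a conflation $0\to F'\to F\to F''\to0$ in $\cX$, extension-closedness of $\relGproj{P}{\cB^{\cC}}$ puts $F$ in $\relGproj{P}{\cB^{\cC}}$; applying the (here exact) functor $\nu$ and then the exact functor $i^*$ yields a short exact sequence $0\to i^*\nu(F')\to i^*\nu(F)\to i^*\nu(F'')\to0$ in $\prod_{c\in\cC}\cB$ whose outer terms lie in $\prod_{c\in\cC}\Gproj(\cB)$, so the middle term does too by extension-closedness of $\Gproj(\cB)$; hence $F\in\cX$. Closure under direct summands follows the same way from additivity of $\nu$ and $i^*$ and summand-closedness of $\Gproj(\cB)$ and of $\relGproj{P}{\cB^{\cC}}$. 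Thus $\cX$ is an extension- and summand-closed, hence exact, subcategory of $\cB^{\cC}$.

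Finally, to see that $\cX$ is Frobenius with the asserted projective-injectives, I would first check that the genuine projectives $i_!(Q)$ with $Q$ projective lie in $\cX$: they are $P$-projective, and $i^*\nu i_!(Q)$ is projective in $\prod_{c\in\cC}\cB$, hence Gorenstein projective. For enough relative projectives, cover $F\in\cX$ by a genuine projective $P_F\twoheadrightarrow F$ and show the syzygy $\Omega F$ lies in $\cX$, using that $\relGproj{P}{\cB^{\cC}}$ is closed under syzygies and that, after applying $\nu$ and $i^*$ to $0\to\Omega F\to P_F\to F\to0$, the category $\Gproj(\cB)$ is closed under kernels of epimorphisms. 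Dually, since $F$ is now known to lie in $\Gproj(\cB^{\cC})$, its cosyzygy $0\to F\to P^{0}\to F^{1}\to0$ by a genuine projective has $F^{1}\in\Gproj(\cB^{\cC})$, and the same transfer via $\nu$ and $i^*$ together with closure of $\Gproj(\cB)$ under cosyzygies gives $F^{1}\in\cX$, providing enough relative injectives. Since in both cases the (co)syzygies are taken by the same class of genuine projectives, the relative projectives and injectives of $\cX$ coincide, so $\cX$ is Frobenius, completing the proof that it is an admissible subcategory of $\Gproj(\cB^{\cC})$.
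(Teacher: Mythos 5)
Your overall architecture is sound, but there is a genuine gap at the single step that carries the real content of the theorem: the cosyzygy step (condition (iv) of Definition \ref{Definition:1.5}). You propose to take an arbitrary monomorphism $0\to F\to P^{0}\to F^{1}\to 0$ into a genuine projective with $F^{1}\in \Gproj(\cB^{\cC})$ and then ``transfer via $\nu$ and $i^*$''. But to apply $\nu$ to this sequence and get a short exact sequence you need $L_1\nu(F^{1})=0$, i.e.\ essentially $F^{1}\in \relGproj{P}{\cB^{\cC}}$ --- which is exactly what is to be proved. Knowing $F^{1}\in\Gproj(\cB^{\cC})$ does not give this: the paper's Example \ref{Example:11} exhibits a Gorenstein projective object that is \emph{not} Gorenstein $P$-projective, so your transfer is circular, and indeed for an arbitrary embedding the cokernel can leave $\relGproj{P}{\cB^{\cC}}$ outright (the category is resolving, hence closed under syzygies, but nothing gives closure under cosyzygies along arbitrary monos into projectives). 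The paper's Lemma \ref{Lemma:16} resolves this non-formally by \emph{choosing} the embedding: start from a monomorphism $i\colon i^*\nu(F)\to Q'$ in $\prod_{c\in\cC}\cB$ with cokernel in $\prod_{c\in\cC}\Gproj(\cB)$, and set $j:=\adjiso{i^*\circ \nu}{i_!}(i)\colon F\to i_!(Q')$. That $j$ is a monomorphism with $\Coker j\in \relGproj{P}{\cB^{\cC}}$ rests on Lemmas \ref{Lemma:6} and \ref{Lemma:10} (the unit $\unit{i^*\circ \nu}{i_!}_F$ is mono with cokernel in $\relGproj{P}{\cB^{\cC}}$, via a pushout square), and a second pushout square then shows $i^*\nu(\Coker j)\in \prod_{c\in\cC}\Gproj(\cB)$. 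Your proposal contains no substitute for this construction. (Your syzygy step, by contrast, is fine, precisely because $\relGproj{P}{\cB^{\cC}}$ is resolving and $L_1\nu(F)=0$ for $F$ in it.)

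A secondary problem is your containment step. The bicomplex totalization is both unnecessary and broken in this generality: in a fixed total degree the direct-sum totalization over the doubly infinite complex is an infinite coproduct, and $\cB$ is an arbitrary $k$-linear abelian category with enough projectives (e.g.\ finite-dimensional modules), where such coproducts need not exist; moreover the total acyclicity of $R^{\bullet}$, which you explicitly defer, is the entire content. The paper avoids all of this by proving the admissibility axioms directly (Lemmas \ref{Lemma:12}, \ref{Lemma:13}, \ref{Lemma:15}, \ref{Lemma:14}, \ref{Lemma:16}); containment in $\Gproj(\cB^{\cC})$ is then formal (Proposition \ref{Proposition:2}\ref{Proposition:2,2}): splice the special sequences from conditions \ref{Definition:1.5,3} and \ref{Definition:1.5,4}, with total acyclicity supplied by condition \ref{Definition:1.5,2}. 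That $\Ext$-vanishing is Lemma \ref{Lemma:15}, via $\Ext^i_{\cB^{\cC}}(F,i_!(Q'))\cong \Ext^i(i^*\nu(F),Q')$, which holds because projective resolutions of $F$ stay exact under $i^*\circ\nu$ (as $L_i\nu(F)=0$) and $i_!$ is exact --- note this uses the adjunction $i^*\circ\nu\dashv i_!$, not $\Hom_{\cB^{\cC}}(i_!(-),-)$ as in your sketch. If you replace your step one by this formal argument and your cosyzygy step by the adjunction-transported embedding of Lemma \ref{Lemma:16}, your outline becomes the paper's proof.
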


We refer to Definition \ref{Definition:1.5} for our definition of admissible subcategory. It implies that
\[
\Gproj(\relGproj{P}{\cB^{\cC}})\subset \Gproj(\cB^{\cC})
\]
where $\Gproj(\cB^{\cC})$ denotes the category of Gorenstein projective objects in $\cB^{\cC}$. It also implies that $\Gproj(\relGproj{P}{\cB^{\cC}})$ is a Frobenius exact subcategory of $\cB^{\cC}$. In fact, Theorem \ref{Theorem:3} holds more generally for any admissible subcategory of $\prod_{c\in \cC}\Gproj(\cB)$ and any $P$-admissible subcategory of $\relGproj{P}{\cB^{\cC}}$, see Definition \ref{Definition:10}. This gives examples of other Frobenius exact categories, see Example \ref{Example:7} and \ref{Example:8}.

It remains to determine when $\Gproj(\relGproj{P}{\cB^{\cC}})=\Gproj(\cB^{\cC})$. In general, this is not true, see Example \ref{Example:11}. However, under some mild conditions the equality holds.

\begin{Theorem}[Theorem \ref{Theorem:4}]\label{Theorem:I3}
Assume $\cB$ is a $k$-linear abelian category with enough projectives and $\cC$ is a small, $k$-linear, locally bounded and Hom-finite category. If either of the following conditions hold, then $\Gproj(\relGproj{P}{\cB^{\cC}})= \Gproj(\cB^{\cC})$:
\begin{enumerate}
\item\label{Theorem:I3,1} For any long exact sequence in $\cB^{\cC}$
	\[
	0\to K\to Q_0\to Q_1\to \cdots
	\]
	with $Q_i$ projective for $i\geq 0$, we have $K\in \relGproj{P}{\cB^{\cC}}$;
	\item\label{Theorem:I3,2} If $B\in \cB$ satisfy $\Ext^1_{\cB}(B,B')=0$ for all $B'$ of finite projective dimension, then $B\in \Gproj(\cB)$.
\end{enumerate}
\end{Theorem}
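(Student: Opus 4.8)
By Theorem \ref{Theorem:3} we already have $\Gproj(\relGproj{P}{\cB^{\cC}})\subseteq \Gproj(\cB^{\cC})$, so the entire content is the reverse inclusion. The plan is to fix $F\in \Gproj(\cB^{\cC})$ together with an acyclic complex of projectives
\[
P^{\bullet}\colon \cdots \to P_1\to P_0\to P^0\to P^1\to \cdots
\]
which stays exact under $\Hom_{\cB^{\cC}}(-,Q)$ for every projective $Q$, with $F=\Ima(P_0\to P^0)$. Write $F_j$ for the cocycles of $P^{\bullet}$, so each $F_j$ is again Gorenstein projective and sits in a short exact sequence $0\to F_{j+1}\to P^j\to F_j\to 0$. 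Unwinding the definition of $\Gproj(\relGproj{P}{\cB^{\cC}})$, I must establish two facts: that $F\in \relGproj{P}{\cB^{\cC}}$, and that $\nu(F)(c)\in \Gproj(\cB)$ for every $c\in \cC$. My first move is to reduce both to the single inclusion
\[
\Gproj(\cB^{\cC})\subseteq \relGproj{P}{\cB^{\cC}}, \qquad (\ast)
\]
which I will then check separately under hypotheses \ref{Theorem:I3,1} and \ref{Theorem:I3,2}.

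Granting $(\ast)$ for $F$ and all its cocycles $F_j$, the vertexwise statement follows cleanly. Since each $F_j\in \relGproj{P}{\cB^{\cC}}$, condition (i) of Definition \ref{Definition:7} gives $L_{i}\nu(F_j)=0$ for $i>0$, so applying $\nu$ to each $0\to F_{j+1}\to P^j\to F_j\to 0$ keeps it short exact; splicing these and using that $i^{*}\nu$ sends projectives to tuples of projectives, I obtain for each $c$ an acyclic complex of projectives $\nu(P^{\bullet})(c)$ in $\cB$ having $\nu(F)(c)$ as a cocycle. To upgrade this to total acyclicity I would use the natural isomorphisms, valid for any $G\in\cB^{\cC}$ and any projective $Q'\in\cB$,
\[
\Hom_{\cB}(\nu(G)(c),Q')\cong \Hom_{\cB^{\cC}}(\nu G,\, i_{*}\iota_c Q')\cong \Hom_{\cB^{\cC}}(G,\, \nu^{-}i_{*}\iota_c Q'),
\]
where $\iota_c$ is the insertion $\cB\to \prod_{c\in\cC}\cB$ at the object $c$ and $i_{*}=\nu i_{!}$ is the right adjoint of $i^{*}$ supplied by property (2) of the Nakayama functor; the first isomorphism is the defining property of the product category together with $i^{*}\dashv i_{*}$, the second is $\nu\dashv\nu^{-}$. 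By property (3) the target simplifies to $\nu^{-}i_{*}\iota_c Q'=\nu^{-}\nu i_{!}\iota_c Q'\cong i_{!}\iota_c Q'$, which is projective in $\cB^{\cC}$. Hence $\Hom_{\cB^{\cC}}(P^{\bullet},\nu^{-}i_{*}\iota_c Q')$ is exact by the choice of $P^{\bullet}$, so $\Hom_{\cB}(\nu(P^{\bullet})(c),Q')$ is exact and $\nu(F)(c)\in \Gproj(\cB)$.

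It therefore remains only to prove $(\ast)$. Under hypothesis \ref{Theorem:I3,1} this is immediate and uniform in all cocycles: in a complete resolution every $F_j$ is an infinite cosyzygy of projectives, i.e.\ it fits into an exact sequence $0\to F_j\to P^j\to P^{j+1}\to \cdots$ with all terms projective, so $F_j\in \relGproj{P}{\cB^{\cC}}$ directly by the hypothesis.

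Under hypothesis \ref{Theorem:I3,2} the inclusion $(\ast)$ is the delicate point, and this is where I expect the main obstacle. Here I would verify the three defining conditions of $\relGproj{P}{\cB^{\cC}}$—vanishing of $L_{i}\nu(F)$ and of $R^{i}\nu^{-}\nu(F)$ for $i>0$, and bijectivity of the unit $\lambda_{F}$—by translating each into an $\Ext$-orthogonality statement over $\cB$ through adjunction isomorphisms of the type displayed above, and then feeding in \ref{Theorem:I3,2}. Concretely, a Gorenstein projective $F$ satisfies $\Ext^{i}_{\cB^{\cC}}(F,W)=0$ for $i>0$ and every $W$ of finite projective dimension; transporting this across the isomorphism $\Ext^{i}_{\cB}(\nu(F)(c),B')\cong \Ext^{i}_{\cB^{\cC}}(F,\nu^{-}i_{*}\iota_c B')$ should force $\Ext^{1}_{\cB}(\nu(F)(c),B')=0$ for all $B'$ of finite projective dimension in $\cB$, whence $\nu(F)(c)\in \Gproj(\cB)$ by \ref{Theorem:I3,2}, which in turn yields the reflexivity and derived-functor vanishing defining membership in $\relGproj{P}{\cB^{\cC}}$. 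The crux—and the step I would spend the most care on—is controlling the projective dimension of the objects $\nu^{-}i_{*}\iota_c B'$ in $\cB^{\cC}$ in terms of that of $B'$ in $\cB$, since it is precisely finiteness of this dimension that licenses the $\Ext$-transport and lets the orthogonality of a Gorenstein projective object be exported to the vertices and pulled back through $(\ast)$.
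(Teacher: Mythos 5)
Your hypothesis-(i) branch is correct and matches the paper (it is exactly Proposition \ref{Proposition:4}\ref{Proposition:4,2} applied to the right half of a complete resolution), and your ``granting $(\ast)$'' computation of vertexwise Gorenstein projectivity is essentially the converse direction of Proposition \ref{Proposition:3}: your chain of adjunctions collapses, via $\nu^{-}i_{*}\iota_c Q'\cong i_{!}\iota_c Q'$, to the single adjunction $i^{*}\circ\nu\dashv i_{!}$ used there. The genuine gap is in the hypothesis-(ii) branch, at the asserted isomorphism $\Ext^{i}_{\cB}(\nu(F)(c),B')\cong\Ext^{i}_{\cB^{\cC}}(F,\nu^{-}i_{*}\iota_c B')$. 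The functor $i^{*}\circ\nu$ is only right exact, and this Ext-transport (the mechanism of Lemma \ref{Lemma:15}, via \cite[Lemma 6.1]{HJ16}) is valid precisely when $L_{i}\nu(F)=0$ for all $i>0$ --- which, under hypothesis \ref{Theorem:I3,2}, is part of what you are trying to prove, since you do not yet know $F\in\relGproj{P}{\cB^{\cC}}$. As stated, your argument is circular. What is true unconditionally is only a monomorphism $\Ext^{1}_{\cB}(\nu(F)(c),B')\hookrightarrow\Ext^{1}_{\cB^{\cC}}(F,i_{!}\iota_c B')$, and establishing it is exactly the content of the paper's proof of Theorem \ref{Theorem:4}: from $0\to A'\xrightarrow{s}Q\to A\to 0$ one only gets a right exact sequence $f^{*}\nu(A')\to f^{*}\nu(Q)\to f^{*}\nu(A)\to 0$, whose kernel $K$ is merely an epimorphic image $p\colon f^{*}\nu(A')\twoheadrightarrow K$; the Hom-level adjunction together with $\Ext^{1}(A,f_{!}(B'))=0$ (for $\pdim B'<\infty$) makes $-\circ f^{*}\nu(s)$ surjective, and since it factors through the injective map $-\circ p$, the map $-\circ i$ is surjective, giving $\Ext^{1}_{\cB}(f^{*}\nu(A),B')=0$. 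Incidentally, the step you single out as the crux is the trivial one: $\nu^{-}i_{*}\iota_c B'\cong i_{!}\iota_c B'$ always (property (3) of the Nakayama functor), and $i_{!}$ is exact and preserves projectives, so $\pdim i_{!}\iota_c B'\leq\pdim B'$ immediately. You worried about the easy step and elided the hard one.

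There is a second unproven assertion in the same branch: that $\nu(F)(c)\in\Gproj(\cB)$ for all $c$ ``in turn yields'' the three conditions of Theorem \ref{Theorem:2.5}\ref{Theorem:2.5:1}. Deducing $R^{i}\nu^{-}\nu(F)=0$ and bijectivity of $\lambda_{F}$ from vertexwise Gorenstein projectivity is not automatic, and the paper never attempts it. Instead, once $f^{*}\circ\nu$ is known to preserve Gorenstein projectives, Proposition \ref{Proposition:4} (the implication \ref{Proposition:4,3}$\implies$\ref{Proposition:4,1}) closes the loop differently: for $A\in\Gproj(\cA)$ with syzygy sequence as above, Lemma \ref{Lemma:2} --- applied to $f^{*}\nu(s)$, which requires $f^{*}\nu(A')\in\Gproj(\cB)$, i.e.\ the preservation statement applied to the syzygy rather than to $A$ itself --- shows $f^{*}\nu(s)$ is a monomorphism, hence by faithfulness of $f^{*}$ that $L_{1}\nu(A)=0$ for every Gorenstein projective $A$. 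Consequently $\nu$ keeps every totally acyclic complex exact, so its cocycles lie in $\relGproj{P}{\cB^{\cC}}$ directly from Definition \ref{Definition:7} (projectives are $P$-projective), and Proposition \ref{Proposition:3} concludes; no separate verification of the $R^{i}\nu^{-}$-vanishing or of the unit being an isomorphism is ever needed. To repair your proposal, replace the asserted Ext isomorphism by the one-degree-at-a-time syzygy argument above, and route the final membership in $\relGproj{P}{\cB^{\cC}}$ through Propositions \ref{Proposition:4} and \ref{Proposition:3} rather than through Theorem \ref{Theorem:2.5}.
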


Condition \ref{Theorem:I3,1} holds when $P$ is Iwanaga-Gorenstein, see Definition \ref{Definition:9} and Corollary \ref{Gorenstein adjoint pairs lifts Gorenstein projectives}.  In this case 
\[
A\in \relGproj{P}{\cB^{\cC}} \quad \quad  \text{if and only if} \quad \quad  L_i\nu(A)=0 \text{ for all }i>0
\]
 and $\relGproj{P}{\cB^{\cC}}$ is therefore particularly easy to compute. 
 
\begin{Example}\label{Example:I1 computation}
Consider $\cC=k\bA_2$ as in Example \ref{Example:I1}. In this case, $P$ is Iwanaga-Gorenstein of dimension $1$. This implies that $L_i\nu(A)=0$ for $i>1$, and hence $A\in \relGproj{P}{\cB^{k\bA_2}}$ if and only if $L_1\nu(A)=0$. If we let $A=(B_1\xrightarrow{f}B_2)$, then a simple computation shows that 
\[
L_1\nu(B_1\xrightarrow{f}B_2)= 0\to \ker f.
\]
In particular, $(B_1\xrightarrow{f}B_2)\in \relGproj{P}{\cB^{k\bA_2}}$ if and only if $f$ is a monomorphism. Since $\nu(B_1\xrightarrow{f}B_2)=B_2\to \Coker f$, we recover the description in Example \ref{Example:I1}
\end{Example} 

More generally, for any locally bounded quiver, $P$ is Iwanaga-Gorenstein of dimension less than or equal $1$. Using this, we recover the description in \cite{EHS13} and \cite{LZ13}, see Proposition \ref{Proposition:10}. We also illustrate how to compute the Gorenstein projectives for quivers with relations in Examples \ref{Example:12}, \ref{Example:13} and \ref{Example:14}. Finally, note that Condition \ref{Theorem:I3,2} of Theorem \ref{Theorem:I3} holds when $\Gpdim B<\infty$ for all $B\in \cB$, see Lemma \ref{Proj Gorenstein}. In particular, $\Gproj(\relGproj{P}{\cB^{\cC}})= \Gproj(\cB^{\cC})$ if $\cB=\md\text{-}\Lambda$ or $\Md\text{-}\Lambda$ for an Iwanaga-Gorenstein algebra $\Lambda$.

Applying Theorem \ref{Theorem:I3} to Example \ref{Example introduction} with $k$ a field, we obtain the following result. 

\begin{Theorem}[Example \ref{Example:10}]\label{Theorem:I4}
Let $k$ be a field, let $\Lambda_1$ be a finite-dimensional $k$-algebra, and let $\Lambda_2$ be a $k$-algebra. If $\Omega^{\infty}(\Lambda_1\text{-}\Md)\subset \Gproj (\Lambda_1\text{-}\Md)$ or
\begin{multline*}\Gproj(\Lambda_2\text{-}\Md)=\{M\in \Lambda_2\text{-}\Md\mid \Ext^1_{\Lambda}(M,M')=0 \\ 
\text{ for all } M' \text{ of finite projective dimension} \}
\end{multline*} then
\begin{align*}
\Gproj((\Lambda_1\otimes_k \Lambda_2)\text{-}\Md) 
= & \{M\in (\Lambda_1\otimes_k \Lambda_2)\text{-}\Md \mid \text{ } _{\Lambda_1}M\in \Gproj(\Lambda_1\text{-}\Md) \\ &\text{ and } _{\Lambda_2}(\Hom_k(\Lambda_1,k)\otimes_{\Lambda_1} M)\in \Gproj(\Lambda_2\text{-}\Md)\}.
\end{align*} 
Hence, this equality holds in particular if $\Lambda_1$ or $\Lambda_2$ is Iwanaga-Gorenstein.
\end{Theorem}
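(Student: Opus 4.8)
The plan is to realize this statement as a direct application of Theorem \ref{Theorem:4} to the setup of Example \ref{Example introduction}, where $\cB=\Lambda_2\text{-}\Md$ and $\cC=\Lambda_1$, regarded as a one-object $k$-linear category. Since $k$ is a field and $\Lambda_1$ is finite-dimensional, $\cC$ is Hom-finite and locally bounded and $\cB$ has enough projectives, so the hypotheses of Theorems \ref{Theorem:3} and \ref{Theorem:4} are satisfied. Here $\cB^{\cC}=(\Lambda_1\otimes_k\Lambda_2)\text{-}\Md$, the functor $i^*$ is restriction along $\Lambda_2\hookrightarrow\Lambda_1\otimes_k\Lambda_2$, and the Nakayama functor and its right adjoint are $\nu(M)=\Hom_k(\Lambda_1,k)\otimes_{\Lambda_1}M$ and $\nu^-(N)=\Hom_{\Lambda_1}(\Hom_k(\Lambda_1,k),N)$.

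First I would identify the subcategory $\Gproj(\relGproj{P}{\cB^{\cC}})$ with the right-hand side of the asserted equality. The second defining condition, $i^*\nu(M)\in\prod_{c\in\cC}\Gproj(\cB)$, unwinds immediately to ${}_{\Lambda_2}(\Hom_k(\Lambda_1,k)\otimes_{\Lambda_1}M)\in\Gproj(\Lambda_2\text{-}\Md)$, since $\cC$ has a single object. For the first condition I claim that
\[
\relGproj{P}{\cB^{\cC}}=\{M\in(\Lambda_1\otimes_k\Lambda_2)\text{-}\Md\mid {}_{\Lambda_1}M\in\Gproj(\Lambda_1\text{-}\Md)\}.
\]
This is the heart of the translation. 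I would check it by examining the three conditions of Definition \ref{Definition:7} and Theorem \ref{Theorem:2.5} one at a time: $L_i\nu(M)=\Tor^{\Lambda_1}_i(\Hom_k(\Lambda_1,k),M)$, the groups $R^i\nu^-(\nu(M))=\Ext^i_{\Lambda_1}(\Hom_k(\Lambda_1,k),\nu(M))$, and the unit $\lambda_M\colon M\to\Hom_{\Lambda_1}(\Hom_k(\Lambda_1,k),\Hom_k(\Lambda_1,k)\otimes_{\Lambda_1}M)$ are all computed over $\Lambda_1$, the extra $\Lambda_2$-action merely being carried along. Hence each of the three conditions holds for $M$ as a $\Lambda_1\otimes_k\Lambda_2$-module if and only if it holds for ${}_{\Lambda_1}M$ as a $\Lambda_1$-module, and by the $\Lambda_2=k$ case recorded after Theorem \ref{Theorem:2.5} these are exactly the conditions defining $\Gproj(\Lambda_1\text{-}\Md)$. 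The key input is that, $k$ being a field, $\Lambda_1\otimes_k\Lambda_2$ is free as a left $\Lambda_1$-module, so restriction along $\Lambda_1\hookrightarrow\Lambda_1\otimes_k\Lambda_2$ sends projectives to projectives and a projective resolution over $\Lambda_1\otimes_k\Lambda_2$ restricts to one over $\Lambda_1$. Combining the two conditions identifies $\Gproj(\relGproj{P}{\cB^{\cC}})$ with the right-hand side of the theorem.

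It then remains to verify that each of the two displayed hypotheses forces one of the two alternatives of Theorem \ref{Theorem:4}. The hypothesis on $\Lambda_2$ is, by the standard fact that Gorenstein projectives never have $\Ext^1$ into modules of finite projective dimension, simply the statement that condition \ref{Theorem:I3,2} holds for $\cB=\Lambda_2\text{-}\Md$. For the hypothesis on $\Lambda_1$ I would prove condition \ref{Theorem:I3,1}: given an exact sequence $0\to K\to Q_0\to Q_1\to\cdots$ in $\cB^{\cC}$ with each $Q_i$ projective, restriction along $\Lambda_1$ (which is exact) produces an exact sequence $0\to{}_{\Lambda_1}K\to{}_{\Lambda_1}Q_0\to{}_{\Lambda_1}Q_1\to\cdots$ with each ${}_{\Lambda_1}Q_i$ projective over $\Lambda_1$. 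Reading off the successive cokernels exhibits ${}_{\Lambda_1}K$ as an $n$-th syzygy for every $n$, so ${}_{\Lambda_1}K\in\Omega^{\infty}(\Lambda_1\text{-}\Md)$; the assumption $\Omega^{\infty}(\Lambda_1\text{-}\Md)\subset\Gproj(\Lambda_1\text{-}\Md)$ then gives ${}_{\Lambda_1}K\in\Gproj(\Lambda_1\text{-}\Md)$, which by the identification above means $K\in\relGproj{P}{\cB^{\cC}}$. Either way Theorem \ref{Theorem:4} yields $\Gproj(\relGproj{P}{\cB^{\cC}})=\Gproj(\cB^{\cC})=\Gproj((\Lambda_1\otimes_k\Lambda_2)\text{-}\Md)$, which combined with the first step is the desired equality.

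Finally, the last sentence follows from two standard characterizations of Iwanaga-Gorenstein algebras: if $\Lambda_1$ is Iwanaga-Gorenstein then $\Omega^{\infty}(\Lambda_1\text{-}\Md)\subset\Gproj(\Lambda_1\text{-}\Md)$, and if $\Lambda_2$ is Iwanaga-Gorenstein then its Gorenstein projectives form the left-hand class of the associated cotorsion pair, i.e. the second hypothesis holds. I expect the main obstacle to be the first step, namely making the identification $\relGproj{P}{\cB^{\cC}}=\{M\mid {}_{\Lambda_1}M\in\Gproj(\Lambda_1\text{-}\Md)\}$ fully rigorous: one must check that the derived functors of $\nu$ and $\nu^-$, a priori computed by resolutions in $(\Lambda_1\otimes_k\Lambda_2)\text{-}\Md$, agree with the corresponding $\Lambda_1$-derived functors, and that the unit $\lambda_M$ is the base change of the $\Lambda_1$-unit; the freeness of $\Lambda_1\otimes_k\Lambda_2$ over $\Lambda_1$ is what makes this work.
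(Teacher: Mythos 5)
Your proposal is correct, and its overall skeleton is exactly the paper's: the paper proves this statement as Example \ref{Example:10}, i.e.\ by feeding the identification of Example \ref{Example:6} into Theorem \ref{Theorem:4}, with the Iwanaga-Gorenstein clause handled just as you do (the syzygy fact for $\Lambda_1$, and Lemma \ref{Proj Gorenstein} for $\Lambda_2$). The one place where you genuinely diverge is the key identification $\relGproj{P}{(\Lambda_1\otimes_k\Lambda_2)\text{-}\Md}=\{M\mid {}_{\Lambda_1}M\in \Gproj(\Lambda_1\text{-}\Md)\}$. The paper (Examples \ref{Example:5} and \ref{Example:6}) proves the inclusion ``$\supseteq$'' by verifying that the right-hand class is a $P$-admissible subcategory (using Lemma \ref{Lemma:6} for condition \ref{Definition:10,4}), and proves ``$\subseteq$'' by restricting the bi-acyclic complex $A_{\bullet}$ of Definition \ref{Definition:7} to $\Lambda_1$ and dualizing, $\Hom_k(\nu(A_{\bullet}),k)\cong \Hom_{\Lambda_1}(A_{\bullet},\Lambda_1)$, together with the product trick to upgrade this to total acyclicity. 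You instead transfer the three conditions of Theorem \ref{Theorem:2.5}\ref{Theorem:2.5:1} along the extension $\Lambda_1\to \Lambda_1\otimes_k\Lambda_2$ and invoke the $\Lambda_2=k$ case (Example \ref{Example:4}). This works and is arguably cleaner, but note it is not literally ``projective resolutions restrict'': $L_i\nu$ is computed via $P$-projective resolutions and $R^i\nu^-$ via coresolutions by $I$-injectives, so what you actually need is that $P$-projectives restrict to projective $\Lambda_1$-modules and that $I$-injectives restrict to direct sums of $D(\Lambda_1)$, which are injective because $\Lambda_1$ is finite-dimensional (hence Noetherian); both follow from the freeness of $\Lambda_1\otimes_k\Lambda_2$ over $\Lambda_1$ that you correctly identify as the crucial input, and restriction being exact and reflecting isomorphisms then handles the unit condition. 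So: same decomposition as the paper, a different (and legitimate) proof of the key lemma; the paper's duality-plus-products argument has the advantage of also covering the finitely presented setting of Example \ref{Example:9} where one cannot simply quote Example \ref{Example:4} verbatim, while yours buys a uniform condition-by-condition transfer with no complex bookkeeping.
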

We have an analogous statement for finitely presented modules, see Example \ref{Example:9}. Finally, using the explicit description of the Gorenstein projective objects in Theorem \ref{Theorem:I3} we also obtain a partly generalization of \cite[Theorem 4.6]{DSS17}, see Theorem \ref{Theorem:6} and Remark \ref{Remark:3}.

The paper is organized as follows. In Section \ref{Section Preliminaries} we recall the notion of Nakayama functors relative to adjoint pairs and the necessary notions in Gorenstein homological algebra. We introduce $P$-admissible subcategories of $\relGproj{P}{\cA}$ in Subsection \ref{Admissible subcategories}. In Subsection \ref{Lifting admissible subcategories} we show that adjoint pairs with Nakayama functor lift admissible subcategories of Gorenstein projectives, see Theorem \ref{Theorem:3}. In Subsection \ref{Lifting Gorenstein projectives} we use Theorem \ref{Theorem:3} to lift Gorenstein projective objects, and we provide sufficient criteria for when all Gorenstein projective objects are obtained, see Theorem \ref{Theorem:4}. In Section \ref{Application to functor categories} we study the functor category $\cB^{\cC}$ in detail.  In Subsection \ref{Monic representations of a quiver} we use Theorem \ref{Theorem:4} to recover the known description of $\Gproj(\cB^{kQ})$ for $Q$ a finite acyclic quiver, and in Subsection \ref{More examples} we compute $\Gproj(\cB^{\cC})$ for other examples of $\cC$.

\subsection{Conventions}\label{Coventions}
For a ring $\Lambda$ we let $\Lambda\text{-}\Md$ ($\Lambda\text{-}\md$) denote the category of (finitely presented) left $\Lambda$-modules. We fix $k$ to be a commutative ring.  All categories are assumed to be preadditive and all functors are assumed to be additive. $\cA$ and $\cB$ always denote abelian categories, and $\cD$ and $\cE$ always denote additive categories. We let $\Proj(\cA)$ denote the category of projective objects in $\cA$.  The projective dimension of an object $A\in \cA$ is denote by $\pdim A$. If $\cB$ and $\cC$ are $k$-linear categories, then $\cB^{\cC}$ denotes the category of $k$-linear functors from $\cC$ to $\cB$. We write $F\dashv G\colon \cD\to \cE$ to denote that we have a functor $F\colon \cD\to \cE$ with right adjoint $G\colon \cE\to \cD$. In this case we let $\unit{F}{G}$ and $\counit{F}{G}$ denote the unit and counit of the adjunction, respectively. Furthermore, $\adjiso{F}{G}\colon \cE(F(D),E)\to \cD(D,G(E))$ denotes the adjunction isomorphism.  If $\sigma\colon F_1\to F_2$ is a natural transformations, then $\sigma_G\colon F_1\circ G\to F_2\circ G$ denotes the natural transformation obtained by precomposing with $G$.

\section{Preliminaries}\label{Section Preliminaries}

\subsection{Gorenstein projective objects}\label{Gorenstein projective objects}
Let $\cA$ be an abelian category. We say that $\cA$ has \emphbf{enough projectives} if for any object $A\in \cA$ there exists an object $Q\in \Proj(\cA)$ and an epimorphism $Q\to A$. 
\begin{Definition}\label{Definition:1} Assume $\cA$ has enough projectives: 
\begin{enumerate}
\item An acyclic complex of projective objects in $\cA$
\[
Q_{\bul} = \cdots \xrightarrow{f_2} Q_1\xrightarrow{f_1} Q_0 \xrightarrow{f_{0}} \cdots
\]
is called \emphbf{totally acyclic} if the complex 
\[
\cA(Q_{\bul},Q) = \cdots \xrightarrow{-\circ f_0} \cA(Q_0,Q)\xrightarrow{-\circ f_1} \cA(Q_1,Q)\xrightarrow{-\circ f_2} \cdots
\]
is acyclic for all $Q\in \Proj (\cA)$.
\item An object $A\in \cA$ is called \emphbf{Gorenstein projective} if there exists a totally acyclic complex $Q_{\bul}$ with $A=Z_0(Q_{\bullet})=\Ker f_0$. We denote the full subcategory of Gorenstein projective objects in $\cA$ by $\Gproj(\cA)$.
\end{enumerate}
\end{Definition} 

\begin{Lemma}\label{Lemma:0.1}
If $\cA$ has enough projectives, then the subcategory $\Gproj (\cA)$ is closed under extensions and direct summands
\end{Lemma}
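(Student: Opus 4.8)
The plan is to read off, from the totally acyclic complexes witnessing Gorenstein projectivity, new totally acyclic complexes for the extension and for the summand, building them out of the given ones. Throughout I use that an object is Gorenstein projective precisely when it is the $0$-cycle $Z_0$ of a totally acyclic complex of projectives.

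\emph{Closure under extensions.} Suppose $0\to A'\to A\to A''\to 0$ is exact with $A',A''\in\Gproj(\cA)$, witnessed by totally acyclic complexes $Q'_\bul$ and $Q''_\bul$ with $Z_0=A'$ and $Z_0=A''$. Reading off the nonnegative and nonpositive halves gives projective resolutions $\cdots\to Q'_1\to A'\to 0$, $\cdots\to Q''_1\to A''\to 0$ together with projective coresolutions $0\to A'\to Q'_0\to Q'_{-1}\to\cdots$ and $0\to A''\to Q''_0\to\cdots$ that stay exact after $\cA(-,Q)$. Applying the Horseshoe Lemma to the resolutions and its dual to the coresolutions, and splicing at degree $0$, I would produce an acyclic complex $Q_\bul$ with $Q_i=Q'_i\oplus Q''_i$ and $Z_0(Q_\bul)=A$, together with a degreewise split short exact sequence of complexes $0\to Q'_\bul\to Q_\bul\to Q''_\bul\to 0$. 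It then remains to check total acyclicity: for $Q\in\Proj(\cA)$, applying the contravariant functor $\cA(-,Q)$ to this degreewise split sequence yields a short exact sequence of complexes $0\to \cA(Q''_\bul,Q)\to \cA(Q_\bul,Q)\to \cA(Q'_\bul,Q)\to 0$ whose outer terms are acyclic, so the long exact homology sequence forces $\cA(Q_\bul,Q)$ to be acyclic. Hence $Q_\bul$ is totally acyclic and $A\in\Gproj(\cA)$.

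\emph{Closure under direct summands.} Suppose $G=A\oplus B\in\Gproj(\cA)$ with totally acyclic complex $T_\bul$, $Z_0(T_\bul)=G$. First, for $i>0$ and $Q\in\Proj(\cA)$ the group $\Ext^i_\cA(A,Q)$ is a direct summand of $\Ext^i_\cA(G,Q)$, and the latter vanishes because $\cA(T_\bul,Q)$ is acyclic; thus $\Ext^i_\cA(A,Q)=0$ for all $i>0$, which guarantees that any projective resolution of $A$ is exact after applying $\cA(-,Q)$. The remaining task is to produce the matching $\cA(-,Q)$-exact projective coresolution of $A$, equivalently a totally acyclic complex with $A$ as its $0$-cycle. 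For this I would lift the idempotent $e\colon G\to G$ projecting onto $A$ to a chain endomorphism $\varphi\colon T_\bul\to T_\bul$ — possible by projectivity on the resolution half and by the $\cA(-,Q)$-exactness on the coresolution half — inducing $e$ on $Z_0$. Since $e^2=e$, both $\varphi^2$ and $\varphi$ lift $e$, so $\varphi$ is idempotent up to homotopy; splitting this homotopy idempotent yields a direct summand $S_\bul$ of $T_\bul$ in the homotopy category, which is again totally acyclic since the acyclicity of $S_\bul$ and of each $\cA(S_\bul,Q)$ is inherited additively from $T_\bul$, and which satisfies $Z_0(S_\bul)=\Ima e=A$. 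Therefore $A\in\Gproj(\cA)$.

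I expect the main obstacle to be the summand case, and precisely the splitting of the homotopy idempotent $\varphi$: realizing $A$ as the $0$-cycle of an \emph{honest} totally acyclic complex requires that the class of totally acyclic complexes be closed under summands in $K(\Proj\cA)$, i.e.\ that one may split homotopy idempotents there. When $\cA$ has countable coproducts this can be made explicit via an Eilenberg swindle; in general one argues that the totally acyclic complexes form a thick subcategory of $K(\Proj\cA)$ and splits the idempotent inside it. By contrast, the extension case is routine once the Horseshoe Lemma is combined with the long exact sequence coming from the degreewise split $\cA(-,Q)$-complexes.
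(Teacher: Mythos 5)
Your extension argument is correct and is essentially the standard one in the literature: the Horseshoe Lemma on the resolution halves, its dual on the coresolution halves (where the needed surjectivity of $\cA(A,Q'_0)\to\cA(A',Q'_0)$ comes from $\Ext^1_{\cA}(A'',Q'_0)=0$, available since $A''$ is Gorenstein projective and $Q'_0$ is projective), and total acyclicity of the glued complex via the degreewise split sequence and the long exact homology sequence. Note that the paper does not argue this at all; it simply cites Proposition 2.13(1) of Beligiannis [Bel00], so here you have supplied a proof where the paper gives a reference.

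The summand half, however, has a genuine gap exactly where you flag it, and the patch you propose does not close it. Splitting the homotopy idempotent $\varphi$ in $K(\Proj(\cA))$ via the mapping telescope (B\"okstedt--Neeman) requires countable coproducts, but the lemma is stated for an arbitrary abelian category with enough projectives --- for instance $\Lambda\text{-}\md$ over a coherent ring --- where no such coproducts exist; and the remark that totally acyclic complexes ``form a thick subcategory of $K(\Proj(\cA))$'' is not an argument: thickness means closure under summands that already exist, whereas what you need is idempotent completeness of $K(\Proj(\cA))$ itself, which is precisely the unproven point. There is also a second, subtler problem even granting a splitting $T_{\bullet}\simeq S_{\bullet}\oplus S'_{\bullet}$ in the homotopy category: $Z_0$ is not a homotopy-invariant functor. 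Homotopic chain maps agree on $0$-cycles only up to a map factoring through $d\colon S_1\to Z_0(S_{\bullet})$, i.e.\ through a projective, so your retraction data only identify $A$ with $Z_0(S_{\bullet})$ up to projective direct summands --- and concluding $A\in\Gproj(\cA)$ from ``$A$ is a summand of $Z_0(S_{\bullet})\oplus Q$ with $Q$ projective'' presupposes the very closure under summands you are trying to prove, so the argument is circular at that point. This is exactly why the paper delegates this half to Theorem 1.4(2) of Huang [Hua13]: the content of that result is a direct, swindle-free construction of a totally acyclic complex for the summand, avoiding both countable coproducts and any idempotent splitting in homotopy categories, and some such device is genuinely needed at the lemma's level of generality.
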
  

\begin{proof}
The fact that $\cA$ is closed under direct summands follows from Theorem 1.4(2) in \cite{Hua13}. The fact that $\Gproj(\cA)$ is closed under extensions follows from Proposition 2.13 (1) in \cite{Bel00}
\end{proof}

\begin{Definition}\label{Definition:1.5}
Assume $\cA$ has enough projectives. A full subcategory $\cF\subset \cA$ is called an \emphbf{admissible subcategory of} $\Gproj(\cA)$ if it is closed under extensions, direct summands, and satisfies the following properties:
\begin{enumerate}
	\item\label{Definition:1.5,1} $\cF$ contains the projective objects in $\cA$;
	\item\label{Definition:1.5,2} $\Ext^1(A,Q)=0$ for all $A\in \cF$ and $Q\in \Proj(\cA)$;
	\item \label{Definition:1.5,3} For all $A\in \cF$ there exists an exact sequence $0\to A'\to Q\to A\to 0$ with $A'\in \cF$ and $Q\in \Proj(\cA)$;
	\item \label{Definition:1.5,4} For all $A\in \cF$ there exists an exact sequence $0\to A\to Q\to A'\to 0$ with $A'\in \cF$ and $Q\in \Proj(\cA)$.
\end{enumerate} 
\end{Definition}

Assume $\cF$ is an admissible subcategory of $\Gproj(\cA)$. Since $\cF$ is closed under extensions, it inherits an exact structure from $\cA$ (see \cite{Bue10} for the theory of exact categories). In fact, under this exact structure $\cF$ becomes a Frobenius exact category, and the projective objects in $\cF$ are precisely the projective objects in $\cA$. 

The following result is immediate from the definition. 

\begin{Proposition}\label{Proposition:2}
Assume $\cA$ has enough projectives. The following holds:
\begin{enumerate}
	\item\label{Proposition:2,1} $\Gproj(\cA)$ is an admissible subcategory of $\Gproj(\cA)$;
	\item $\Proj(\cA)$ is an admissible subcategory of $\Gproj(\cA)$;
	\item\label{Proposition:2,2} Assume $\cF$ is an admissible subcategory of $\Gproj(\cA)$. Then $\cF\subset \Gproj(\cA)$.
\end{enumerate} 
\end{Proposition}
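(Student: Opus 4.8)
The plan is to prove the three items separately by checking the axioms of Definition~\ref{Definition:1.5} in each case. The first two items, \ref{Proposition:2,1} and the statement for $\Proj(\cA)$, are direct verifications; item \ref{Proposition:2,2} is the one that carries real content, since admissibility is \emph{defined} without any reference to $\Gproj(\cA)$, so the containment $\cF\subset \Gproj(\cA)$ must be genuinely established.

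For item \ref{Proposition:2,1}, closure of $\Gproj(\cA)$ under extensions and direct summands is exactly Lemma~\ref{Lemma:0.1}. Axiom \ref{Definition:1.5,1} holds because any projective $Q$ is Gorenstein projective, witnessed by the contractible totally acyclic complex $\cdots \to 0 \to Q \xrightarrow{\Id} Q \to 0 \to \cdots$ with $Q=Z_0$. For the rest, fix $A\in \Gproj(\cA)$ with defining totally acyclic complex $Q_\bullet$, so $A=Z_0(Q_\bullet)$. The left part $\cdots \to Q_2\to Q_1 \to A\to 0$ is a projective resolution of $A$, and acyclicity of $\cA(Q_\bullet,Q)$ at the relevant spot shows $\Ext^1_{\cA}(A,Q)=0$ for every projective $Q$, giving axiom \ref{Definition:1.5,2}. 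Axioms \ref{Definition:1.5,3} and \ref{Definition:1.5,4} are then read off directly from $Q_\bullet$ via the short exact sequences $0\to Z_1(Q_\bullet)\to Q_1\to A\to 0$ and $0\to A\to Q_0\to Z_{-1}(Q_\bullet)\to 0$, using that every cycle object of a totally acyclic complex is again Gorenstein projective (its shift is totally acyclic). The statement for $\Proj(\cA)$ is immediate: projectives are closed under summands and under extensions (any extension of a projective by a projective splits), they contain the projectives, $\Ext^1_{\cA}(P,Q)=0$ for $P$ projective, and axioms \ref{Definition:1.5,3}--\ref{Definition:1.5,4} hold trivially via $0\to 0\to P\xrightarrow{\Id} P\to 0$ and $0\to P\xrightarrow{\Id}P\to 0\to 0$.

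For item \ref{Proposition:2,2}, let $\cF$ be admissible and fix $A\in \cF$. First I would iterate axiom \ref{Definition:1.5,3} to build an exact sequence $\cdots \to Q_1\to Q_0\to A\to 0$ with all $Q_i$ projective and all syzygies in $\cF$, and iterate axiom \ref{Definition:1.5,4} to build $0\to A\to Q^0\to Q^1\to \cdots$ with all $Q^i$ projective and all cosyzygies in $\cF$. Splicing these along $A$ produces an acyclic complex $Q_\bullet$ of projectives with $A$ as a cycle object and \emph{every} cycle object lying in $\cF$. It then remains to show $Q_\bullet$ is totally acyclic: decomposing it into its short exact sequences $0\to Z_n\to Q_n\to Z_{n-1}\to 0$ with $Z_n\in \cF$ and applying $\cA(-,Q)$ for $Q$ projective, axiom \ref{Definition:1.5,2} gives $\Ext^1_{\cA}(Z_{n-1},Q)=0$, so each sequence stays short exact; reassembling these short exact sequences shows $\cA(Q_\bullet,Q)$ is acyclic, whence $A\in \Gproj(\cA)$.

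The hard part will be the total-acyclicity verification in item \ref{Proposition:2,2}, which is where the containment is actually proven rather than assumed; the only delicate point is the bookkeeping of the cycle objects so that axiom \ref{Definition:1.5,2} applies to every splicing sequence, after which everything reduces to the long exact sequence for $\Ext$.
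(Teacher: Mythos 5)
Your proof is correct, and it takes the only natural route: the paper itself offers no argument for Proposition~\ref{Proposition:2}, stating merely that it is ``immediate from the definition,'' and your verification --- Lemma~\ref{Lemma:0.1} plus reading axioms \ref{Definition:1.5,2}--\ref{Definition:1.5,4} off a totally acyclic complex for items \ref{Proposition:2,1} and (ii), and for item \ref{Proposition:2,2} splicing the resolutions from axioms \ref{Definition:1.5,3}--\ref{Definition:1.5,4} into an acyclic complex of projectives whose total acyclicity follows from $\Ext^1_{\cA}(Z,Q)=0$ for the cycle objects $Z\in\cF$ --- is exactly the standard argument the authors leave implicit. In particular you correctly identified that item \ref{Proposition:2,2} is where the real (if routine) content lies, and your bookkeeping of the cycle objects is sound.
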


Recall that a full subcategory $\cX \subset\cA$ is called \emphbf{generating} if for any $A\in \cA$ there exists an object $X\in \cX$ and an epimorphism $X\to A$. A full subcategory $\cX \subset\cA$ is called \emphbf{resolving} if it is generating and closed under direct summands, extensions, and kernels of epimorphisms. Here we follow the same conventions as in \cite{Sto14}. Note that a resolving subcategory contains all the projective objects in $\cA$. 

\begin{Lemma}\label{Lemma:1} 
Assume $\cA$ has enough projectives, and let $\cF$ be an admissible subcategory of $\Gproj(\cA)$. Then $\cF$ is a resolving subcategory of $\cA$. 
\end{Lemma}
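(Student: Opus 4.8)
The plan is to verify the four conditions in the definition of a resolving subcategory. Two of them---closure under direct summands and under extensions---are built into the hypothesis that $\cF$ is an admissible subcategory of $\Gproj(\cA)$, so nothing needs to be done there. For the generating property, I would invoke condition \ref{Definition:1.5,1}: given any $A\in\cA$, since $\cA$ has enough projectives there is an epimorphism $Q\to A$ with $Q\in\Proj(\cA)$, and $Q$ lies in $\cF$ by \ref{Definition:1.5,1}. Hence $\cF$ is generating.

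The substantive point is closure under kernels of epimorphisms. Suppose $0\to K\to B\to C\to 0$ is a short exact sequence in $\cA$ with $B,C\in\cF$; I must show $K\in\cF$. The idea is to use condition \ref{Definition:1.5,3} to replace the surjection onto $C$ by a projective one, and then exploit projectivity to split off $K$. Concretely, by \ref{Definition:1.5,3} applied to $C$ there is a short exact sequence $0\to C'\to Q\to C\to 0$ with $C'\in\cF$ and $Q\in\Proj(\cA)$.

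Next I would form the pullback $E$ of $B\to C\leftarrow Q$. Computing the two projections from $E$, this yields two short exact sequences: $0\to C'\to E\to B\to 0$, where the kernel of $E\to B$ is the kernel of $Q\to C$, namely $C'$; and $0\to K\to E\to Q\to 0$, where the kernel of $E\to Q$ is the kernel of $B\to C$, namely $K$. From the first sequence, since $C',B\in\cF$ and $\cF$ is closed under extensions, I conclude $E\in\cF$. In the second sequence $Q$ is projective, so the epimorphism $E\to Q$ splits and $E\cong K\oplus Q$. Finally, since $\cF$ is closed under direct summands and $E\in\cF$, the summand $K$ lies in $\cF$, as desired.

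The main obstacle is organizing the kernels-of-epimorphisms argument so that projectivity can be brought to bear: the key observation is that after pulling back along a projective surjection onto $C$, the kernel $K$ becomes a direct summand of an object of $\cF$. Verifying that the two pullback sequences have the stated kernels and epimorphisms is routine (the universal property, together with the fact that a pullback of an epimorphism is an epimorphism in an abelian category), and notably the argument uses neither condition \ref{Definition:1.5,2} nor \ref{Definition:1.5,4}, nor the a priori inclusion $\cF\subset\Gproj(\cA)$.
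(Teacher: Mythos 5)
Your proof is correct and is essentially the paper's argument: the paper lifts the projective epimorphism $Q\to A_1$ through $g$ using projectivity of $Q$ and invokes a pushout--pullback lemma to obtain the exact sequence $0\to A\to A_3\oplus Q\to A_2\to 0$, which is literally your sequence $0\to C'\to E\to B\to 0$ after your splitting identifies the pullback $E$ with $K\oplus Q$. Both proofs then conclude via closure under extensions and direct summands, using only conditions \ref{Definition:1.5,1} and \ref{Definition:1.5,3}, exactly as you observe.
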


\begin{proof}
We only need to check that it is closed under kernels of epimorphisms. Let $0\to A_3\xrightarrow{f} A_2\xrightarrow{g} A_1\to 0$ be an exact sequence in $\cA$ with $A_2\in \cF$ and $A_1\in \cF$. Choose an exact sequence $0\to A\xrightarrow{i} Q\xrightarrow{p} A_1\to 0$ in $\cA$ with $Q$ projective and $A\in \cF$. Since $Q$ is projective, there exists a morphism $s\colon Q\to A_2$ satisfying $g\circ s = p$. This gives a commutative diagram
\[
\begin{tikzpicture}[description/.style={fill=white,inner sep=2pt}]
\matrix(m) [matrix of math nodes,row sep=2.5em,column sep=5.0em,text height=1.5ex, text depth=0.25ex] 
{0 & A & Q & A_1 & 0\\
 0 & A_3 & A_2 & A_1 & 0 \\};
\path[->]
(m-1-1) edge node[auto] {$$} 	    													    (m-1-2)
(m-1-2) edge node[auto] {$i$} 	    													  (m-1-3)
(m-1-3) edge node[auto] {$p$} 	    													  (m-1-4)
(m-1-4) edge node[auto] {$$} 	    													    (m-1-5)
(m-2-1) edge node[auto] {$$} 	    													    (m-2-2)
(m-2-2) edge node[auto] {$f$} 	    											(m-2-3)
(m-2-3) edge node[auto] {$g$} 	    											(m-2-4)
(m-2-4) edge node[auto] {$$} 	    													    (m-2-5)
(m-1-2) edge node[auto] {$$} 	    								    (m-2-2)
(m-1-3) edge node[auto] {$s$} 	    									  (m-2-3)
(m-1-4) edge node[auto] {$1_{A_1}$} 	    								   	(m-2-4);	
\end{tikzpicture}
\]
with exact rows, where the morphism $A\to A_3$ is induced from the commutativity of the right square. By Lemma 5.2 in \cite{Pop73} the left square is a pushforward and a pullback square, and hence we get an exact sequence
\[
0\to A\to A_3\oplus Q\to A_2\to 0.
\]
Since $\cF$ is closed under extensions and direct summands, it follows that $A_3\in \cF$.
\end{proof}
In particular, it follows that $\Gproj(\cA)$ is a resolving subcategory of $\cA$.

One can define the \emphbf{resolution dimension}  $\dim_{\cX}(A)$ of any object $A\in \cA$ with respect to a resolving subcategory $\cX$ of $\cA$, see \cite{Sto14}. It is the smallest integer $n\geq 0$ such that there exists an exact sequence 
\[
0\to X_n\to \cdots X_1\to X_0\to A\to 0 
\] 
where $X_i\in \cX$ for $0\leq i\leq n$. In this case, if 
\[
0\to X'_n\to \cdots X'_1\to X'_0\to A\to 0 
\]
is another exact sequence with  $X_i'\in \cX$ for all $0\leq i\leq n-1$, then $X'_n\in \cX$, see \cite[Proposition 2.3]{Sto14}. We write $\dim_{\cX}(A)=\infty$ if there doesn't exist such an $n$. The \emphbf{global resolution dimension} $\dim_{\cX}(\cA)$ of $\cA$ with respect to $\cX$ is the supremum of  $\dim_{\cX}(A)$ over all $A\in \cA$. Putting $\cX=\Gproj(\cA)$ we get the \emphbf{Gorenstein projective dimension} 
\[
\Gpdim (A):= \dim_{\Gproj(\cA)}(A) 
\]
and the \emphbf{global Gorenstein projective dimension}
\[
\glGpdim (\cA):= \dim_{\Gproj(\cA)}(\cA).
\]

We need the following lemma later.

\begin{Lemma}\label{Lemma:2}
Let $A_2\xrightarrow{f}A_1$ be a morphism in $\cA$ with $A_2\in \Gproj(\cA )$. Assume 
\[
\cA(A_1,Q)\xrightarrow{-\circ f} \cA(A_2,Q)
\]
is an epimorphism for all projective objects $Q\in \cA$. Then $f$ is a monomorphism.
\end{Lemma}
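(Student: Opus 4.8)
\textbf{Proof proposal for Lemma \ref{Lemma:2}.}

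The plan is to use the characterization of Gorenstein projective objects via totally acyclic complexes together with the epimorphism hypothesis on $\cA(A_1,Q)\xrightarrow{-\circ f}\cA(A_2,Q)$. Since $A_2\in\Gproj(\cA)$, I can fix a totally acyclic complex $Q_\bullet$ of projectives with $A_2=Z_0(Q_\bullet)=\Ker(Q_0\xrightarrow{\partial_0}Q_{-1})$, and in particular there is a canonical monomorphism $\iota\colon A_2\hookrightarrow Q_0$ into a projective object. The idea is that the map $f\colon A_2\to A_1$ factors appropriately so that the monomorphism $\iota$ forces $f$ to be monic as well; the role of the hypothesis is precisely to produce, for the projective target $Q=Q_0$, a morphism through which $\iota$ factors over $f$.

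Concretely, first I would apply the hypothesis with $Q=Q_0$. The monomorphism $\iota\colon A_2\to Q_0$ is an element of $\cA(A_2,Q_0)$, so surjectivity of $\cA(A_1,Q_0)\xrightarrow{-\circ f}\cA(A_2,Q_0)$ yields a morphism $g\colon A_1\to Q_0$ with $g\circ f=\iota$. Now let $a\colon X\to A_2$ be any morphism with $f\circ a=0$; composing with $g$ gives $\iota\circ a=g\circ f\circ a=0$. Since $\iota$ is a monomorphism, $a=0$. Testing against $X=\Ker f$ with $a$ the inclusion shows $\Ker f=0$, which is exactly the statement that $f$ is a monomorphism. (If one prefers to avoid subobject language, the same computation applied to an arbitrary $a$ directly verifies the defining property of a monomorphism, namely that $f\circ a=f\circ a'$ implies $a=a'$, by linearity.)

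The only genuine content beyond formal manipulation is the existence of the monomorphism $\iota\colon A_2\hookrightarrow Q_0$ into a \emph{projective} object, which I expect to be the main point to get right. This is guaranteed by the definition of Gorenstein projective: writing $A_2=Z_0(Q_\bullet)$ for a totally acyclic complex $\cdots\to Q_1\to Q_0\to Q_{-1}\to\cdots$, the object $A_2$ is by construction a kernel $\Ker(Q_0\to Q_{-1})$, hence embeds into $Q_0$ via the inclusion of this kernel. Thus $\iota$ is the required monomorphism into the projective $Q_0$, and the epimorphism hypothesis is applied exactly to this projective target. I do not expect any subtlety in the totally acyclic structure to be needed beyond the existence of this one embedding, so the proof should be short once $\iota$ is in hand.
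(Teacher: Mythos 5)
Your proposal is correct and follows essentially the same route as the paper's proof: both take a monomorphism $\iota\colon A_2\to Q_0$ into a projective (which exists since $A_2=Z_0(Q_\bullet)$ for a totally acyclic complex), lift it through $f$ via the epimorphism hypothesis, and conclude that $f$ is a monomorphism because $\iota$ is. The only difference is cosmetic: the paper states the existence of the embedding without comment, while you spell out its origin from the definition of Gorenstein projective.
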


\begin{proof}
Let $A_2\xrightarrow{i} Q$ be a monomorphism into a projective object $Q$. By assumption, there exists a morphism $h\colon A_1\to Q$ such that $i=h\circ f$. This implies that $f$ is a monomorphism, and we are done. 
\end{proof}

\subsection{Derived functors}\label{Derived functors}

For a functor $F\colon \cD\to \cE$, we let $\im F$ denote the full subcategory of $\cE$ consisting of the objects $F(D)$ for $D\in \cD$. 

\begin{Proposition}[Proposition 3.1.4 in \cite{Kva17}]\label{Right and left derived functors}
Let $\cA$ and $\cB$ be abelian categories, and let $G\colon \cA\to \cB$ be a functor.
\begin{enumerate}
\item\label{Right and left derived functors:1} Assume $G$ is left exact, $L\dashv R\colon \cA\to \cD$ is an adjunction, and $\im R$ is a cogenerating subcategory of $\cA$. If $R\circ L$ and $G\circ R\circ L$ are exact functors, then the $i$th right derived functor $R^iG$ of G exists for all $i>0$, and $R^iG(X)=0$ for all $i>0$ and $X\in \im R$;
\item\label{Right and left derived functors:2} Assume $G$ is right exact, $L'\dashv R'\colon \cD\to \cA$ is an adjunction, and $\im L'$ is a generating subcategory of $\cA$. If $L'\circ R'$ and $G\circ L'\circ R'$ are exact functors, then the $i$th left derived functor $L_iG$ of G exists for all $i>0$, and $L_iG(X)=0$ for all $i>0$ and $X\in \im L'$.
\end{enumerate}
\end{Proposition}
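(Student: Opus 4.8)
The plan is to prove part \ref{Right and left derived functors:1} directly via the monad attached to $L\dashv R$ and to deduce part \ref{Right and left derived functors:2} by passing to opposite categories. Throughout write $T=R\circ L\colon \cA\to\cA$ for this monad, with unit $\eta:=\unit{L}{R}\colon \Id_{\cA}\to T$ and multiplication $\mu:=R(\counit{L}{R})L\colon T^2\to T$. First I would show $\eta_A\colon A\to T(A)$ is a monomorphism for every $A$: since $\im R$ is cogenerating there is a monomorphism $m\colon A\to R(D)$, and writing $m=R(\tilde m)\circ\eta_A$ for the adjoint $\tilde m\colon L(A)\to D$ forces $\eta_A$ to be monic, as the first factor of a monomorphism. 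Using this together with exactness of $T$, I would attach to each $A$ the canonical coresolution obtained by iterating cokernels: set $A^0=A$ and $A^{n+1}=\Coker(\eta_{A^n})$, yielding short exact sequences $0\to A^n\to T(A^n)\to A^{n+1}\to 0$ whose splice is an exact coresolution $0\to A\to T(A^0)\to T(A^1)\to\cdots$ by objects of $\im T\subset\im R$. I would then define $R^iG(A)$ to be the $i$th cohomology of the complex obtained by applying $G$ to this coresolution (equivalently, one may use the cobar complex $T(A)\to T^2(A)\to\cdots$).

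Next I would establish acyclicity of the relevant class. For a cofree object $T(Y)$ the unit $\eta_{T(Y)}$ is a split monomorphism, with retraction $\mu_Y$ supplied by the monad law $\mu\circ\eta T=\id_T$; more generally the monad laws $\mu\circ\eta T=\id=\mu\circ T\eta$ provide a contracting homotopy showing that the canonical coresolution of $T(Y)$ is contractible. Applying the additive functor $G$ to a contractible complex gives again a contractible, hence exact, complex, so $R^iG(T(Y))=0$ for all $i>0$. For an arbitrary $X\in\im R$, say $X=R(D)$, the triangle identity gives $R\big((\counit{L}{R})_D\big)\circ\eta_X=\id_X$, so $\eta_X$ is split monic and $X$ is a retract of the cofree object $T(X)$; by additivity of the construction the vanishing passes to retracts, giving $R^iG(X)=0$ for $i>0$. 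This is the asserted vanishing on $\im R$.

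It remains to check that the functors $R^iG$ are the genuine right derived functors of $G$, and this is where both exactness hypotheses enter and where I expect the main difficulty to lie. I would verify that $\{R^iG\}_{i\geq 0}$ is a cohomological $\delta$-functor: given a short exact sequence $0\to A'\to A\to A''\to 0$, exactness of $T$ (hence of each $T^n$) and exactness of $GT$ together show that $G\circ T^{n}=(GT)\circ T^{n-1}$ preserves the sequence in every cohomological degree, so the cobar complexes sit in a levelwise short exact sequence and the snake lemma produces the long exact sequence. Left-exactness of $G$ applied to $0\to A\to T(A)\to A^1$ identifies $R^0G$ with $G$. Finally, every $A$ embeds via $\eta_A$ into the $G$-acyclic object $T(A)$, so the $\delta$-functor is effaceable in positive degrees, hence universal, and therefore coincides with the usual right derived functor of $G$. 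Combining this identification with the vanishing of the previous paragraph proves part \ref{Right and left derived functors:1}.

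For part \ref{Right and left derived functors:2} I would apply part \ref{Right and left derived functors:1} to $G\op\colon\cA\op\to\cB\op$, which is left exact since $G$ is right exact. The adjunction $L'\dashv R'$ becomes $R'\op\dashv L'\op\colon\cA\op\to\cD\op$, its associated monad is $L'\op\circ R'\op=(L'\circ R')\op$, and $G\op\circ L'\op\circ R'\op=(G\circ L'\circ R')\op$; thus exactness of $L'R'$ and of $GL'R'$ yields exactness of the two composites required by part \ref{Right and left derived functors:1}. Moreover $\im L'$ generating in $\cA$ translates into $\im(L'\op)$ cogenerating in $\cA\op$. Part \ref{Right and left derived functors:1} then gives $R^i(G\op)=(L_iG)\op$ for $i>0$ together with its vanishing on $\im(L'\op)$, which dualizes back to the existence of $L_iG$ and the vanishing $L_iG(X)=0$ for all $i>0$ and $X\in\im L'$.
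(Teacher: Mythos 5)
The paper offers no proof of this proposition at all: it is imported verbatim from \cite{Kva17} (Proposition 3.1.4), where the derived functors are constructed by exactly the relative homological algebra you use, namely (co)resolutions coming from the (co)monad of the adjunction. So your argument is not a different route but a correct reconstruction of the intended one: the unit $\eta$ is monic because $\im R$ is cogenerating, the spliced coresolution $0\to A\to T(A^0)\to T(A^1)\to\cdots$ by objects of $\im T$ defines $R^iG$, the monad laws split the unit on retracts of cofree objects (hence on all of $\im R$, by the triangle identity), which gives the vanishing, and effaceability plus the long exact sequence makes the resulting $\delta$-functor universal with $R^0G\cong G$ by left exactness; the dualization for part (ii) is also handled correctly. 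One point deserves tightening: you define $R^iG$ by the spliced coresolution but then verify the $\delta$-functor axiom using the cobar complex, and the asserted equivalence of the two is not free --- the cobar complex must first be shown exact in $\cA$ (e.g.\ $T$ applied to it is contractible via $\mu$, and then $T$ exact together with $\eta$ monic forces acyclicity, since $T(H^n)=H^n(T(-))=0$ and $\eta_{H^n}$ is monic into $0$). Alternatively, and more economically, you can avoid the cobar complex altogether: given $0\to A'\to A\to A''\to 0$, an induction with the snake lemma (using that $T$ is exact and each $\eta_{A^n}$ is monic) shows $0\to A'^n\to A^n\to A''^n\to 0$ is exact for every $n$, and then exactness of $G\circ T$ makes the $G$-applied spliced coresolutions levelwise short exact, which yields the long exact sequence directly. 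With that repair the proof is complete and matches the construction the paper cites; it is also consistent with how the paper itself later identifies these derived functors with comonad homology in the proof of Lemma \ref{Lemma:5**}.
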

We say that $R$ \emphbf{is adapted to} $G$ or $L'$ \emphbf{is adapted to} $G$ in these two cases, respectively. Note that $\im R$ is cogenerating if and only the unit of the adjunction $L\dashv R$ is a monomorphism. By the dual of \cite[Theorem IV.3.1]{MLan98} this is equivalent to $L$ being faithful. Dually, $\im L'$ is generating if and only if the counit of $L'\dashv R'$ is an epimorphism, and by \cite[Theorem IV.3.1]{MLan98} this is equivalent to $R'$ being faithful.

We need the following result.

\begin{Lemma}\label{Lemma:5**}
Let $\cA$ and $\cB$ be abelian categories, and let $\eta\colon G_1\to G_2$ and $\epsilon\colon G_2\to G_3$ be two natural transformations between functors $\cA\to \cB$.
\begin{enumerate}
\item\label{Lemma:5**:1} Assume $G_i$ is left exact for all $i$. Furthermore, assume there exists an adjunction $L\dashv R\colon \cA\to \cD$ such that $R$ is adapted to $G_i$ for all $i$. If the sequence 
\[
0\to G_1\circ R\circ L \xrightarrow{\eta_{R\circ L}}G_2\circ R\circ L \xrightarrow{\epsilon_{R\circ L}}G_3\circ R\circ L \to 0
\]
is exact, then there exists a long exact sequence
\begin{multline*}
0\to G_1 \xrightarrow{\eta} G_2 \xrightarrow{\epsilon} G_3\to L_1G_1\to L_1G_2\to L_1G_3\to L_2G_1\to \cdots
\end{multline*}
\item\label{Lemma:5**:2} Assume $G_i$ is right exact for all $i$. Furthermore, assume there exists an adjunction $L'\dashv R'\colon \cD\to \cA$ such that $L'$ is adapted to $G_i$ for $1\leq i\leq 3$. If the sequence 
\[
0\to G_1\circ L'\circ R' \xrightarrow{\eta_{L'\circ R'}}G_2\circ L'\circ R' \xrightarrow{\epsilon_{L'\circ R'}}G_3\circ L'\circ R' \to 0
\]
is exact, then there exists a long exact sequence
\begin{multline*}
\cdots \to L_2G_3\to L_1G_1\to L_1G_2\to L_1G_3 \to G_1\xrightarrow{\eta}G_2\xrightarrow{\epsilon}G_3\to 0
\end{multline*}
\end{enumerate} 
\end{Lemma}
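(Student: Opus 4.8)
The plan is to obtain both long exact sequences as the long exact (co)homology sequence of a short exact sequence of complexes, where the complexes are produced by resolving an arbitrary object of $\cA$ by objects built from the adjunction, on which all three functors become acyclic. The point is that the same resolution serves $G_1$, $G_2$ and $G_3$ simultaneously, so that the termwise hypothesis transfers to a short exact sequence of complexes.

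For part \ref{Lemma:5**:1}, fix $A\in\cA$. Since $R$ is adapted to each $G_i$, the subcategory $\im R$ is cogenerating, so by the discussion following Proposition \ref{Right and left derived functors} the unit $\alpha\colon \Id_{\cA}\to R\circ L$ is a monomorphism. First I would build a coresolution of $A$ all of whose terms have the form $RL(-)$: take $\alpha_A\colon A\to RL(A)$, set $C^0=\Coker\alpha_A$, then $\alpha_{C^0}\colon C^0\to RL(C^0)$ with $C^1=\Coker\alpha_{C^0}$, and continue; since each component $\alpha_{C^n}$ is monic, splicing these short exact sequences yields an exact sequence
\[
0\to A\to RL(A)\to RL(C^0)\to RL(C^1)\to\cdots,
\]
which I denote $A\to X^{\bullet}$. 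Each $X^n$ lies in $\im(R\circ L)\subseteq\im R$, hence is acyclic for every $G_i$ by Proposition \ref{Right and left derived functors}\ref{Right and left derived functors:1}; by the comparison theorem for acyclic coresolutions this computes the right derived functors, giving $R^jG_i(A)\cong H^j(G_i(X^{\bullet}))$, together with $H^0(G_i(X^{\bullet}))=G_i(A)$ by left exactness.

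Next I would feed in the hypothesis. As every $X^n$ has the form $RL(W^n)$, the assumed exactness of $0\to G_1\circ R\circ L\to G_2\circ R\circ L\to G_3\circ R\circ L\to 0$ specializes in each degree to a short exact sequence $0\to G_1(X^n)\to G_2(X^n)\to G_3(X^n)\to 0$, so that $0\to G_1(X^{\bullet})\to G_2(X^{\bullet})\to G_3(X^{\bullet})\to 0$ is a short exact sequence of cochain complexes whose maps are induced by $\eta$ and $\epsilon$. Its long exact cohomology sequence, read through the identifications above, is precisely the asserted long exact sequence (with the right derived functors $R^{\bullet}G_i$). Part \ref{Lemma:5**:2} is entirely dual: since $L'$ is adapted to each $G_i$ the counit of $L'\dashv R'$ is an epimorphism, so one builds an exact resolution $X_{\bullet}\to A$ with all terms of the form $L'R'(-)\in\im L'$, which computes the left derived functors $L_{\bullet}G_i$; the hypothesis again yields a short exact sequence of complexes whose long exact homology sequence is the claim.

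The hard part will not be the homological machinery but the bookkeeping: one must verify that the degree-preserving maps in the long exact sequence coincide with those induced by $\eta$ and $\epsilon$, and that the connecting maps are genuinely the snake-lemma homomorphisms. The one structural point that must be pinned down is that an $\im R$-coresolution of the above form really does compute $R^{\bullet}G_i$ (and dually for $L_{\bullet}G_i$); this rests on the acyclicity supplied by Proposition \ref{Right and left derived functors} together with the comparison theorem, and it is exactly what makes the termwise hypothesis on $R\circ L$ (resp. $L'\circ R'$) sufficient to conclude.
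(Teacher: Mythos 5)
Your proposal is correct, but it takes a genuinely different route from the paper. The paper's proof of part \ref{Lemma:5**:2} is essentially a citation: it observes that the adjunction $L'\dashv R'$ induces a comonad $\sfS=L'\circ R'$ on $\cA$, identifies $L_nG_i$ with the comonad (bar-resolution) homology $H_n(-,G_i)$ of \cite{BB69}, and then invokes Section 3.2 of \cite{BB69}, which yields the long exact sequence precisely from your hypothesis that the sequence of functors is exact after precomposition with $\sfS$; part \ref{Lemma:5**:1} is then dual. Your argument replaces the simplicial bar resolution by a reduced resolution built from counit-kernels and reproves the Barr--Beck result by hand via the snake lemma, which makes the proof self-contained. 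Two remarks. First, the structural point you flag is the right one and is where care is needed: since $\cA$ need not have enough projectives, the textbook comparison theorem is not directly available, and the claim that $\im L'$-resolutions (resp.\ $\im R$-coresolutions) compute $L_{\bullet}G_i$ (resp.\ $R^{\bullet}G_i$) must be extracted from the fact that the derived functors of Proposition \ref{Right and left derived functors} come equipped with long exact sequences --- they are $\delta$-functors, as the paper uses elsewhere (e.g.\ in the proof of Proposition \ref{Proposition:4}); granted that, dimension shifting along your resolution $\cdots\to L'R'(K_0)\to L'R'(A)\to A\to 0$, together with the vanishing $L_jG_i(X)=0$ for $X\in \im L'$, gives $H_j(G_i(X_{\bullet}))\cong L_jG_i(A)$, and since kernels are functorial the whole construction is natural in $A$, so your long exact sequence is one of functors, as required. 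Second, you correctly read the terms $L_1G_i, L_2G_1,\dots$ in the displayed sequence of part \ref{Lemma:5**:1} as the right derived functors $R^iG_j$; that appears to be a typo in the statement, and your interpretation matches the dual of part \ref{Lemma:5**:2}. In sum: the paper's proof is two lines but opaque without \cite{BB69}, while yours is longer but elementary and makes explicit exactly where the adaptedness hypothesis and the termwise exactness on $\im(L'\circ R')$ enter.
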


\begin{proof}
We prove part (ii), part (i) follows dually. Let $\sfS$ be the induced comonad on $\cA$ from the adjunction $L'\dashv R'$. There is an obvious natural isomorphism $L_nG_i\cong H_n(-,G_i)$ and $G_i\cong H_0(-,G_i)$ where $H_i(-,G_i)$ is the comonad homology relative to $\sfS$ as defined in Section 1 in \cite{BB69}. The claim follows now from Section 3.2 in \cite{BB69}. 
\end{proof}

\subsection{Nakayama functor}

We need the notion of Nakayama functor relative to adjoint pairs which was introduced in \cite{Kva17}.

\begin{Definition}\label{Nakayama functor for adjoint pair}
Let $f^*\colon \cA\to \cD$ be a faithful functor with left adjoint $f_!\colon \cD\to \cA$. A \emphbf{Nakayama functor} relative to $f_!\dashv f^*$ is a functor $\nu\colon \mathcal{A}\to \mathcal{A}$ with  a right adjoint $\nu^-$ satisfying:
\begin{enumerate}
\item $\nu\circ f_!$ is right adjoint to $f^*$;
\item The unit of $\nu\dashv\nu^-$ induces an isomorphism $f_!\xrightarrow{\cong} \nu^-\circ \nu \circ f_!$ when precomposed with $f_!$.
\end{enumerate}
\end{Definition}
We let $\lambda\colon 1_{\cA}\to \nu^-\circ \nu$ and $\sigma\colon \nu\circ \nu^-\to 1_{\cA}$ denote the unit and counit of the adjunction $\nu\dashv \nu^-$. We also fix the notation $f_*:=\nu\circ f_!$, $P:=f_!\circ f^*$ and $I:=f_*\circ f^*$. Note that we have adjunctions 
\[
f^*\circ \nu \dashv f_!\dashv f^*\dashv f_*\dashv f^*\circ \nu^-. 
\]
We call summands of objects $P(A)$ for $P$-\emphbf{projective} and summands of objects $I(A)$ for $I$-\emphbf{injective}. By the triangle identities the $P$-projectives and $I$-injectives are precisely the summands of objects of the form $f_!(D)$ and $f_*(D)$ for $D\in \cD$, respectively. Since $P$, $\nu\circ P=I$, and $\nu^-\circ I\cong P$ are exact, it follows from Proposition \ref{Right and left derived functors} that $f_!$ is adapted to $\nu$ and $f_*$ is adapted to $\nu^-$. In particular, the derived functors $L_i\nu$ and $R^i\nu^-$ exist for all $i>0$.   

\begin{Definition}[Definition 4.1.1 in \cite{Kva17}]\label{Definition:7}
Assume $\nu$ is a Nakayama functor relative to $f_!\dashv f^*\colon \cD\to \cA$. An object $X\in \cA$ is called \emphbf{Gorenstein} $P$-\emphbf{projective} if there exists an exact sequence 
\[
A_{\bullet}=\cdots \xrightarrow{f_{2}} A_{1}\xrightarrow{f_{1}} A_0\xrightarrow{f_0} A_{-1}\xrightarrow{f_{-1}} \cdots
\] 
with $A_i\in \cA$ being $P$-projective for all $i\in \bZ$, such that the sequence
\[
\nu(A_{\bullet})=\cdots \xrightarrow{\nu(f_{2})} \nu(A_{1})\xrightarrow{\nu(f_{1})} \nu(A_0)\xrightarrow{\nu(f_0)} \nu(A_{-1})\xrightarrow{\nu(f_{-1})} \cdots
\] 
is exact, and with $Z_0(A_{\bullet})=\Ker f_0=X$. The subcategory of $\cA$ consisting of all Gorenstein $P$-projective objects is denoted by $\relGproj{P}{\cA}$.
\end{Definition}

\begin{Proposition}\label{Closure of Gorenstein P-projective}
Assume $\nu$ is a Nakayama functor relative to the adjunction $f_!\dashv f^*\colon \cD\to \cA$. The following holds:
\begin{enumerate}
\item $\relGproj{P}{\cA}$ is a resolving subcategory of $\cA$;
\item Assume $i\colon A_2\to A_1$ is a morphism such that $\nu(i)$ is a monomorphism and $A_1,A_2\in \relGproj{P}{\cA}$. Then $i$ is a monomorphism and $\Coker i\in \relGproj{P}{\cA}$.
\end{enumerate}
\end{Proposition}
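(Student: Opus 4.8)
The engine of both parts will be the homological description of Gorenstein $P$-projectivity: an object $X$ lies in $\relGproj{P}{\cA}$ if and only if $L_i\nu(X)=0$ and $R^i\nu^-(\nu(X))=0$ for all $i>0$ and the unit $\lambda_X\colon X\to \nu^-\nu(X)$ is an isomorphism. This is the content of Theorem \ref{Theorem:2.5}, and it follows from Definition \ref{Definition:7} by observing that a complete resolution $A_{\bul}$ of $X$ is a $P$-projective resolution whose image under $\nu$ is exact. Throughout I use that, because $f^*$ is faithful, the counit $P(A)=f_!f^*(A)\to A$ is an epimorphism, so $\im f_!$ is generating; hence $L_i\nu$ and $R^i\nu^-$ are computed from $P$-projective resolutions and coresolutions, fit into long exact sequences for arbitrary short exact sequences of objects, and satisfy $L_i\nu|_{\im f_!}=0$ and $R^i\nu^-|_{\im f_*}=0$ for $i>0$ by Proposition \ref{Right and left derived functors}.

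For part (i), I first record that $\relGproj{P}{\cA}$ is generating and contains the projectives: the counit $P(A)\to A$ is an epimorphism and $P(A)\in\relGproj{P}{\cA}$, since $P(A)\in\im f_!$ gives $L_i\nu(P(A))=0$ for $i>0$, since $\nu P(A)=f_*f^*(A)\in\im f_*$ gives $R^i\nu^-(\nu P(A))=0$ for $i>0$, and since $\lambda_{f_!(-)}$ is an isomorphism by the second condition in Definition \ref{Nakayama functor for adjoint pair}. Closure under direct summands is then immediate, as all three conditions are additive in $X$ (the derived functors are additive and $\lambda_{X\oplus Y}=\lambda_X\oplus\lambda_Y$). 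For closure under extensions and under kernels of epimorphisms, I take a short exact sequence $0\to A'\to A\to A''\to 0$ with two of the three terms in $\relGproj{P}{\cA}$ and verify the three conditions for the third: the long exact sequence for $L_\bullet\nu$ yields $L_i\nu=0$ in positive degrees; the vanishing of the relevant $L_1\nu$ makes $0\to\nu A'\to\nu A\to\nu A''\to 0$ exact, so the long exact sequence for $R^\bullet\nu^-$ yields the vanishing of $R^i\nu^-\nu$ for $i>0$; and the five lemma, applied to the morphism of short exact sequences supplied by $\lambda$, upgrades the two known unit isomorphisms to the third. Generating together with closure under summands, extensions, and kernels of epimorphisms is precisely the definition of resolving.

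Part (ii) splits cleanly. That $i$ is a monomorphism is formal: $\nu^-$ is a right adjoint, hence left exact, so $\nu^-\nu(i)$ is a monomorphism; naturality of $\lambda$ gives $\lambda_{A_1}\circ i=\nu^-\nu(i)\circ\lambda_{A_2}$, and since $\lambda_{A_1}$ and $\lambda_{A_2}$ are isomorphisms (as $A_1,A_2\in\relGproj{P}{\cA}$) the right-hand side, and therefore $i$, is a monomorphism. For $\Coker i$ I apply the same machinery to $0\to A_2\xrightarrow{i}A_1\to\Coker i\to 0$: the long exact sequence for $L_\bullet\nu$ gives $L_1\nu(\Coker i)=\Ker\nu(i)=0$, which is exactly where the hypothesis that $\nu(i)$ is a monomorphism enters, together with $L_i\nu(\Coker i)=0$ for $i\geq 2$; the resulting exactness of $0\to\nu A_2\to\nu A_1\to\nu(\Coker i)\to 0$ feeds the $R^\bullet\nu^-$ sequence to give $R^i\nu^-\nu(\Coker i)=0$ for $i>0$, and the five lemma gives that $\lambda_{\Coker i}$ is an isomorphism. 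Hence $\Coker i\in\relGproj{P}{\cA}$.

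The main obstacle is the homological characterization itself, which carries the real content; granting it, the remaining steps are diagram chases. Within these, the one nonformal point is the edge term in the kernel-of-epimorphism case: showing that $R^1\nu^-$ of the kernel vanishes requires knowing that $A\to A''$ being an epimorphism forces $\nu^-\nu(A)\to\nu^-\nu(A'')$ to be an epimorphism, which one reads off from $\lambda_A$ and $\lambda_{A''}$ being isomorphisms. If one instead insists on arguing directly from complete resolutions and avoids the characterization, the genuine difficulty migrates to closure under direct summands, which would then require an Eilenberg-swindle-type argument, and the naive horseshoe construction for extensions fails because $P$-projectives need not lift along an arbitrary epimorphism.
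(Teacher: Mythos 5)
Your proof is correct, but it does not follow the paper's route, because the paper gives no argument at all here: its ``proof'' of Proposition \ref{Closure of Gorenstein P-projective} is a citation of \cite[Proposition 4.1.5]{Kva17} and \cite[Lemma 4.1.6]{Kva17}. You instead derive everything from the three-condition characterization of $\relGproj{P}{\cA}$ (Theorem \ref{Theorem:2.5}\ref{Theorem:2.5:1}, itself imported from \cite[Proposition 4.1.3]{Kva17}); this is legitimate and non-circular even though Theorem \ref{Theorem:2.5} is printed after the Proposition in this paper, since in \cite{Kva17} the characterization precedes the closure results, and your argument very likely parallels the proofs there. What your route buys is a uniform mechanism: summands, extensions, kernels of epimorphisms, and the cokernel statement in (ii) all reduce to the two long exact sequences plus the five lemma, with the only non-formal steps being exactly the two you flag (the edge term $R^1\nu^-$ of the kernel, killed because $\lambda_A$ and $\lambda_{A''}$ being isomorphisms force $\nu^-\nu(A)\to\nu^-\nu(A'')$ to be an epimorphism; and $L_1\nu(\Coker i)=\Ker\nu(i)=0$, where the hypothesis on $\nu(i)$ enters). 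The one piece of infrastructure you assert in passing and should justify is that $L_\bullet\nu$ and $R^\bullet\nu^-$ admit long exact sequences on \emph{arbitrary} short exact sequences of objects --- relative derived functors do not do this in general. It holds here because $P=f_!\circ f^*$, $I=\nu\circ P=f_*\circ f^*$, and $\nu^-\circ I\cong P$ are exact: the bar resolution $\cdots\to P^2(A)\to P(A)\to A\to 0$ (exact since it acquires a contracting homotopy after applying the faithful exact $f^*$) is functorial and, $P^n$ being exact, turns a short exact sequence of objects into a short exact sequence of complexes, which stays short exact after applying $\nu$ since $\nu\circ P^{n}=I\circ P^{n-1}$ is exact; dually for the cobar resolution and $\nu^-$. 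This is consistent with the paper's own identification of $L_i\nu$ with comonad homology relative to $\sfS=f_!f^*$ in the proof of Lemma \ref{Lemma:5**} (via \cite{BB69}), and the same dimension-shifting argument justifies your side remark that any exact resolution by $P$-projectives computes $L_i\nu$, since $P$-projectives are $L_i\nu$-acyclic by Proposition \ref{Right and left derived functors} and closure of acyclics under summands. With that gloss added, all remaining steps --- including the monomorphism claim in (ii) from left exactness of the right adjoint $\nu^-$ and naturality of $\lambda$ --- are complete and correct.
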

\begin{proof}
This follows from \cite[Proposition 4.1.5]{Kva17} and \cite[Lemma 4.1.6]{Kva17}.
\end{proof}

\begin{Definition}[Definition 4.2.1 in \cite{Kva17}]\label{Definition:9}
Assume $\nu$ is a Nakayama functor relative to $f_!\dashv f^*\colon \cD\to \cA$. We say that $P$ is \emphbf{Iwanaga-Gorenstein} if there exists an integer $n\geq 0$ such that $L_i\nu=0$ and $R^i\nu^-=0$ for all $i>n$.
\end{Definition}

\begin{Theorem}[Theorem 4.2.6 in \cite{Kva17}]\label{Theorem:2}
Assume $\nu$ is a Nakayama functor relative to $f_!\dashv f^*\colon \cD\to \cA$, and that $P$ is Iwanaga-Gorenstein. Then the following numbers coincide: 
\begin{enumerate}
\item\label{Theorem:2,1} $\dim_{\relGproj{P}{\cA}}(\cA)$;
\item\label{Theorem:2,2} The smallest integer $r$ such that $L_i\nu=0$ for all $i>r$;
\item\label{Theorem:2,3} The smallest integer $s$ such that $R^i\nu^-=0$ for all $i>s$.
\end{enumerate}
\end{Theorem}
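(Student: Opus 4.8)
Set $d:=\dim_{\relGproj{P}{\cA}}(\cA)$ for the quantity in \ref{Theorem:2,1} and let $r$, $s$ be the thresholds in \ref{Theorem:2,2} and \ref{Theorem:2,3}. The Iwanaga-Gorenstein hypothesis (Definition \ref{Definition:9}) makes $r$ and $s$ finite, both $\le n$. The plan is to prove $r\le d$ by an elementary dimension shift, and then to identify the total derived functors $\mathbf{L}\nu$ and $\mathbf{R}\nu^-$ as mutually inverse autoequivalences of $\mathbf{D}(\cA)$; the latter yields both $r=s$ and the reverse inequality $d\le r$ at once.

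For $r\le d$, I first observe that every $X\in\relGproj{P}{\cA}$ is $L\nu$-acyclic: the complex $A_\bullet$ of Definition \ref{Definition:7} restricts to a left resolution $\cdots\to A_1\to A_0\to X\to 0$ by $P$-projectives, which computes $L_i\nu(X)$ since $f_!$ is adapted to $\nu$ (Proposition \ref{Right and left derived functors}), and $\nu(A_\bullet)$ is exact by hypothesis, so $L_i\nu(X)=0$ for $i>0$. Given an arbitrary $A$, if $d<\infty$ choose a resolution $0\to X_d\to\cdots\to X_0\to A\to 0$ with $X_j\in\relGproj{P}{\cA}$ (possible since $\dim_{\relGproj{P}{\cA}}(A)\le d$); breaking it into short exact sequences and feeding them into the long exact sequence of the functors $L_i\nu$, the $L\nu$-acyclicity just proved lets a dimension shift collapse everything to $L_i\nu(A)\cong L_{i-d}\nu(X_d)=0$ for $i>d$. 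Hence $L_i\nu=0$ for $i>d$ (vacuously if $d=\infty$), i.e. $r\le d$.

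The core step is to show that $\mathbf{L}\nu\dashv\mathbf{R}\nu^-$ restricts to an adjoint equivalence of $\mathbf{D}(\cA)$, which is well defined and bounded by the Iwanaga-Gorenstein assumption. The unit $\lambda$ of $\nu\dashv\nu^-$ is invertible on every $P$-projective by Definition \ref{Nakayama functor for adjoint pair}; since $P$-projectives are $\nu$-acyclic and their $\nu$-images $f_*(D)$ are $I$-injective, hence $\nu^-$-acyclic, the derived unit $1\to\mathbf{R}\nu^-\mathbf{L}\nu$ is an isomorphism on $P$-projectives, and as these generate $\mathbf{D}(\cA)$ it is a natural isomorphism; dually for the counit. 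Granting this, two consequences follow. First, an autoequivalence and its inverse have the same amplitude, and these amplitudes are exactly $r$ and $s$, so $r=s$. Second, for the $r$-th $P$-syzygy $K$ of an arbitrary $A$ one has $L_i\nu(K)\cong L_{i+r}\nu(A)=0$ for $i>0$, so $\mathbf{L}\nu(K)\simeq\nu(K)$; applying $\mathbf{R}\nu^-$ and using $\mathbf{R}\nu^-\mathbf{L}\nu\cong\Id$ gives $R^i\nu^-(\nu(K))=0$ for $i>0$ together with an isomorphism $\lambda_K\colon K\xrightarrow{\cong}\nu^-\nu(K)$. These are precisely the two remaining conditions in the characterization of Gorenstein $P$-projectives from \cite{Kva17} (the vanishing of $L_i\nu(K)$ being the third), so $K\in\relGproj{P}{\cA}$ and $\dim_{\relGproj{P}{\cA}}(A)\le r$. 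Taking the supremum over $A$ yields $d\le r$, and combined with the first step $d=r=s$.

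The main obstacle is the core step: making precise that $\mathbf{L}\nu$ and $\mathbf{R}\nu^-$ are everywhere-defined, bounded triangulated endofunctors of $\mathbf{D}(\cA)$, and promoting the pointwise invertibility of $\lambda$ on $P$-projectives to a natural isomorphism of derived functors. One must check that $P$-projective resolutions (resp. $I$-injective coresolutions) compute $\mathbf{L}\nu$ (resp. $\mathbf{R}\nu^-$) even in the unbounded setting, where the Iwanaga-Gorenstein bound is essential, and that generation of $\mathbf{D}(\cA)$ by $P$-projectives suffices to detect isomorphisms of bounded-amplitude functors. If one prefers to avoid unbounded derived categories, the same conclusions can be reached object-wise: the isomorphism $\mathbf{R}\nu^-\mathbf{L}\nu\cong\Id$ is replaced by a Grothendieck-type spectral sequence $R^p\nu^-(L_q\nu(A))\Rightarrow A$ built from the Nakayama axioms and the relative derived-functor formalism of \cite{Kva17,BB69}, from which the vanishing statements and the equality $r=s$ are extracted directly.
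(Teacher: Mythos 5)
You should first note that this paper contains no proof of Theorem \ref{Theorem:2} at all: it is imported verbatim from \cite[Theorem 4.2.6]{Kva17}, so your attempt can only be measured against that source, not against anything printed here. Your first step ($r\le d$ via $L\nu$-acyclicity of $\relGproj{P}{\cA}$ and dimension shifting) is correct, and your use of Theorem \ref{Theorem:2.5}(i) at the end is legitimate and non-circular, since that statement is \cite[Proposition 4.1.3 and Theorem 4.2.2]{Kva17} and precedes 4.2.6 there. But the core step has two genuine gaps as written. First, the framing in the unbounded derived category $\mathbf{D}(\cA)$ is problematic: $\cA$ is not assumed to have enough projectives or injectives, only the generating class of $P$-projectives and the cogenerating class of $I$-injectives, and a class that generates the abelian category does not by itself detect isomorphisms of functors on unbounded complexes, nor do everywhere-defined $\mathbf{L}\nu$, $\mathbf{R}\nu^-$ obviously exist there. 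You flag this yourself, which is honest, but flagging is not proving. The repair is to work in $D^b(\cA)$, which is all the theorem needs: bounded-above $P$-projective resolutions exist, the Iwanaga--Gorenstein bound gives finite amplitude, and an induction on amplitude via truncation triangles promotes the unit and counit isomorphisms (valid on $P$-projectives and $I$-injectives exactly as you argue, using $\nu f_!=f_*$, $\nu^-f_*\cong f_!$ and invertibility of $\lambda$ on summands of $f_!(D)$) to natural isomorphisms on all of $D^b(\cA)$. Your fallback spectral sequence $R^p\nu^-(L_q\nu(A))\Rightarrow A$ is circular as stated: its abutment is the hyper-derived functor of the composite $\nu^-\nu$, and identifying that abutment with the identity is precisely the derived-unit isomorphism it was meant to replace.

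Second, and more seriously, the sentence ``an autoequivalence and its inverse have the same amplitude'' is not a formal fact --- it is exactly where $r=s$ is supposed to come from, and you give no argument, so this is a missing idea at the pivot of the proof. It is true in your situation, but it needs a t-structure argument which you should supply: if $t>r$ were maximal with $H^t(\mathbf{R}\nu^-(B))\neq 0$ for some $B\in\cA$, the truncation map $\mathbf{R}\nu^-(B)\to H^t(\mathbf{R}\nu^-(B))[-t]$ is nonzero in $D^b(\cA)$; applying the equivalence $\mathbf{L}\nu$ gives a nonzero map $B\to \mathbf{L}\nu\bigl(H^t(\mathbf{R}\nu^-(B))\bigr)[-t]$, whose target has cohomology concentrated in degrees $\geq t-r\geq 1$ while $B$ sits in degree $0$, contradicting the vanishing $\Hom(D^{\le 0},D^{\ge 1})=0$; hence $s\le r$, and the dual truncation argument gives $r\le s$. (A naive corner argument in the hypercohomology spectral sequence only yields statements like $\nu^-(L_r\nu(A))=0$, not the equality, which is why the truncation argument is really needed.) Finally, note an available shortcut: since the paper already quotes \cite[Theorem 4.2.2]{Kva17} as Theorem \ref{Theorem:2.5}(ii), the equality $d=r$ is a three-line syzygy argument ($L_i\nu(K)\cong L_{i+r}\nu(A)=0$ for the $r$-th $P$-syzygy $K$, so $K\in\relGproj{P}{\cA}$) with no derived categories at all; your derived-equivalence machinery, once repaired as above, is then needed only for $r=s$. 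With those two repairs your route goes through and is a genuinely different, more structural argument than the relative-homological induction one expects in \cite{Kva17}.
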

If this common number is $n$ we say that $P$ is $n$\emphbf{-Gorenstein}. We also say that $n$ is the Gorenstein dimension of $P$.

The following theorem is useful for computing examples.

\begin{Theorem}\label{Theorem:2.5}
Assume $\nu$ is a Nakayama functor relative to $f_!\dashv f^*\colon \cD\to \cA$. The following holds:
\begin{enumerate}
\item\label{Theorem:2.5:1} $A\in \relGproj{P}{\cA}$ if and only if 
\begin{enumerate}
\item $L_i\nu(A)=0$ for all $i>0$;
\item $R^i\nu^-(\nu(A))=0$ for all $i>0$;
\item $\lambda_A\colon A\to \nu^-\nu(A)$ is an isomorphism.
\end{enumerate}
\item\label{Theorem:2.5:2} If $P$ is Iwanaga-Gorenstein, then 
\[
\relGproj{P}{\cA}=\{A\in \cA \mid  L_i\nu(A)=0 \text{ for all }i>0\}.
\]
\end{enumerate}
\end{Theorem}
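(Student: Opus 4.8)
The plan is to characterize membership in $\relGproj{P}{\cA}$ by explicitly building and dismantling the complete $P$-projective resolutions of Definition~\ref{Definition:7}. Two preliminary observations make this possible. Since $f^*$ is faithful, the counit $P=f_!f^*\to 1_{\cA}$ is an epimorphism and the unit $1_{\cA}\to f_*f^*=I$ is a monomorphism, so $\cA$ has enough $P$-projectives and enough $I$-injectives. Moreover, property~(2) of the Nakayama functor says $\lambda$ is an isomorphism on every $P$-projective, and dually the triangle identity $\sigma_{\nu X}\circ\nu\lambda_X=1_{\nu X}$ together with property~(2) shows that $\sigma$ is an isomorphism on every $I$-injective; finally $\nu^-$ carries $I$-injectives to $P$-projectives because $\nu^-\nu f_!\cong f_!$. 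These are the structural facts I lean on throughout, using that $P$-projectives are $\nu$-acyclic and $I$-injectives are $\nu^-$-acyclic.

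For the forward direction of part~\ref{Theorem:2.5:1}, suppose $A\in\relGproj{P}{\cA}$ with complete resolution $A_\bullet$ and $A=\Ker f_0$. Shifting $A_\bullet$ shows every syzygy and cosyzygy again lies in $\relGproj{P}{\cA}$. Reading off the left half $\cdots\to A_2\to A_1\to A\to 0$ as a $P$-projective resolution and using that $\nu(A_\bullet)$ is exact gives $L_i\nu(A)=0$ for $i>0$, which is~(a); applied to the cosyzygies, the same vanishing supplies the monomorphisms identifying $\nu(A)$ with the cycle $\Ker(\nu(A_0)\to\nu(A_{-1}))$. Consequently the right half $0\to\nu(A)\to\nu(A_0)\to\nu(A_{-1})\to\cdots$ is a coresolution of $\nu(A)$ by $\nu^-$-acyclic $I$-injectives. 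Applying $\nu^-$ and using that each $\lambda_{A_i}$ is an isomorphism identifies the result with $A_0\to A_{-1}\to\cdots$; its cohomology computes $R^i\nu^-(\nu(A))$, which is $0$ for $i>0$ (this is~(b)) and $\nu^-\nu(A)\cong\Ker(A_0\to A_{-1})=A$, the comparison map being $\lambda_A$ by naturality (this is~(c)).

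For the converse of part~\ref{Theorem:2.5:1}, assume (a), (b), (c). Condition~(a) makes $\nu$ of a chosen $P$-projective resolution $\cdots\to P_1\to P_0\to A\to 0$ exact, supplying the left half. For the right half I coresolve $\nu(A)$ by $I$-injectives $0\to\nu(A)\to J_0\to J_1\to\cdots$ and apply $\nu^-$: condition~(b) makes this exact, condition~(c) identifies its initial term with $A$, and each $\nu^-(J_i)$ is $P$-projective, yielding a $P$-projective coresolution $0\to A\to\nu^-(J_0)\to\cdots$. Splicing the halves along $P_0\twoheadrightarrow A\hookrightarrow\nu^-(J_0)$ produces an exact complex of $P$-projectives with $\Ker f_0=A$. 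The one genuinely delicate point---and the main obstacle---is verifying that $\nu$ sends this spliced complex to an exact complex: on the right half one must show $\nu\nu^-(J_i)\cong J_i$ and that the restored coaugmentation is the original one. This is exactly where $\sigma$ being invertible on $I$-injectives, the triangle identity $\sigma_{\nu(A)}\circ\nu\lambda_A=1_{\nu(A)}$, and naturality of $\sigma$ are used. Definition~\ref{Definition:7} then gives $A\in\relGproj{P}{\cA}$.

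Part~\ref{Theorem:2.5:2} follows from part~\ref{Theorem:2.5:1}. The inclusion $\subseteq$ is immediate from~(a). For $\supseteq$, let $P$ be Iwanaga-Gorenstein of Gorenstein dimension $n$ (Theorem~\ref{Theorem:2}); the case $n=0$ is trivial since then $\cA=\relGproj{P}{\cA}$, so take $n\geq1$. As $P$-projectives lie in $\relGproj{P}{\cA}$, a truncated $P$-projective resolution together with the uniqueness property of resolution dimension (\cite[Proposition 2.3]{Sto14}) shows the $n$-th syzygy $\Omega^n A$ lies in $\relGproj{P}{\cA}$, hence satisfies (b) and (c). Given $A$ with $L_i\nu(A)=0$ for $i>0$ (a condition inherited by all syzygies via dimension shifting), I propagate (b) and (c) up from $\Omega^n A$: applying $\nu^-$ to the sequences $0\to\Omega^{j+1}A\to P_j\to\Omega^j A\to 0$, whose $\nu$-images are short exact by~(a), gives $R^i\nu^-(\nu\Omega^j A)\cong R^{i+1}\nu^-(\nu\Omega^{j+1}A)$ for $i\geq1$ (using that $\nu(P_j)$ is $\nu^-$-acyclic and $\lambda_{P_j}$ is an isomorphism), whence $R^i\nu^-(\nu A)\cong R^{i+n}\nu^-(\nu\Omega^n A)=0$ for $i>0$. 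The accompanying five-lemma comparison of the $\lambda$-ladders, with the connecting $R^1\nu^-$-terms vanishing for the same reason, shows inductively that $\lambda_A$ is an isomorphism. Thus $A$ satisfies (a), (b), (c), and part~\ref{Theorem:2.5:1} gives $A\in\relGproj{P}{\cA}$.
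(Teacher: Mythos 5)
Your proposal is correct, but note that the paper offers no argument of its own here: its ``proof'' of Theorem~\ref{Theorem:2.5} is the citation of \cite[Proposition 4.1.3]{Kva17} and \cite[Theorem 4.2.2]{Kva17}. What you have written is a self-contained reconstruction from the two defining axioms of the Nakayama functor, and it holds up. The structural facts you isolate at the outset are exactly the right currency: $\lambda$ invertible on $P$-projectives (axiom (2) plus summands), $\sigma$ invertible on $I$-injectives via the triangle identity $\sigma_{\nu X}\circ\nu(\lambda_X)=1_{\nu X}$, and $\nu^-$ exchanging $I$-injectives for $P$-projectives through $\nu^-\nu f_!\cong f_!$; with these, your forward direction (cutting the complete resolution, using that all cosyzygies again lie in $\relGproj{P}{\cA}$ so their $L_1\nu$ vanishes) and your converse (splicing a $P$-projective resolution with $\nu^-$ of an $I$-injective coresolution of $\nu(A)$) both go through, and you correctly identify the one delicate verification---that $\nu$ of the spliced complex is exact on the coresolution half---and resolve it with naturality of $\sigma$ plus the triangle identity, which indeed forces $\sigma_{J_0}\circ\nu\nu^-(\epsilon)\circ\nu(\lambda_A)=\epsilon$. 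Your part~\ref{Theorem:2.5:2} is also sound: Theorem~\ref{Theorem:2} gives $\dim_{\relGproj{P}{\cA}}(\cA)=n$, \cite[Proposition 2.3]{Sto14} applies because $\relGproj{P}{\cA}$ is resolving (Proposition~\ref{Closure of Gorenstein P-projective}), so $\syzygy{}{n}{A}\in\relGproj{P}{\cA}$, and the shift isomorphisms $R^i\nu^-(\nu\syzygy{}{j}{A})\cong R^{i+1}\nu^-(\nu\syzygy{}{j+1}{A})$ supply both condition (b) and the vanishing connecting terms needed for the short-five-lemma ladders that propagate (c) down to $\lambda_A$. Two harmless remarks. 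First, you implicitly use that $L_i\nu$ and $R^i\nu^-$ admit long exact sequences on short exact sequences of objects and are computed by resolutions with $\nu$-acyclic (respectively $\nu^-$-acyclic) terms; this is part of the framework of \cite{Kva17}, justified because $P$, $I=\nu\circ P$ and $\nu^-\circ I\cong P$ are exact, and the paper itself uses it without comment (for instance in the proofs of Proposition~\ref{Proposition:4} and Lemma~\ref{Lemma:15}), so you are entitled to it as well. Second, at the splice point there is a cosmetic indexing slip: with the convention $Z_0(A_\bullet)=\Ker f_0$ of Definition~\ref{Definition:7} one must take $f_0$ to be the differential $\nu^-(J_0)\to\nu^-(J_1)$ of the coresolution half, so that $\Ker f_0=\im\bigl(P_0\to\nu^-(J_0)\bigr)=A$; your phrasing glosses over this relabeling but nothing is at stake. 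Compared with the paper, your route buys a proof readable without access to \cite{Kva17}, at the cost of presupposing the derived-functor calculus just mentioned.
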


\begin{proof}
This is \cite[Proposition 4.1.3]{Kva17} and \cite[Theorem 4.2.2]{Kva17}.
\end{proof}

\begin{Example}[Example 3.2.6 in \cite{Kva17}]\label{Example:3}
Let $k$ be a commutative ring, and let $\Lambda_1$ and $\Lambda_2$ be $k$-algebras. Consider the adjoint pair $f_!\dashv f^*$ where $f^*$ is the restriction functor
\[
f^*:= \res^{\Lambda_1\otimes_k \Lambda_2}_{\Lambda_2}\colon (\Lambda_1\otimes_k \Lambda_2) \text{-}\Md \to \Lambda_2 \text{-}\Md
\] 
and $f_!:=\Lambda_1\otimes_k -\colon \Lambda_2 \text{-}\Md \to (\Lambda_1\otimes_k \Lambda_2) \text{-}\Md$. If $\Lambda_1$ is finitely generated projective as a $k$-module, then the functor
\[
\nu:=\Hom_k(\Lambda_1,k)\otimes_{\Lambda_1} -\colon (\Lambda_1\otimes_k \Lambda_2)\text{-} \Md \to (\Lambda_1\otimes_k \Lambda_2) \text{-} \Md.  
\]
is a Nakayama functor relative to $f_!\dashv f^*$. 
\end{Example}

\begin{Example}\label{finitely presented}
Assume $k$, $\Lambda_1$ and $\Lambda_2$ are as in Example \ref{Example:3}. If in addition $\Lambda_2$ is left coherent, then the categories $\Lambda_2 \text{-}\md$ and $(\Lambda_1\otimes_k \Lambda_2)\text{-}\md$ of finitely presented left modules are abelian. In this case $f^*$, $f_!$ and $\nu$ restrict to functors 
\begin{align*}
& f^*:=\res^{\Lambda_1\otimes_k \Lambda_2}_{\Lambda_2}\colon (\Lambda_1\otimes_k \Lambda_2) \text{-}\md \to \Lambda_2 \text{-}\md \\
& f_!:=\Lambda_1\otimes_k -\colon \Lambda_2 \text{-}\md \to (\Lambda_1\otimes_k \Lambda_2) \text{-}\md \\
& \nu:=\Hom_k(\Lambda_1,k)\otimes_{\Lambda_1} -\colon (\Lambda_1\otimes_k \Lambda_2)\text{-} \md \to (\Lambda_1\otimes_k \Lambda_2) \text{-} \md.
\end{align*} 
and $\nu$ is still a Nakayama functor relative to $f_!\dashv f^*$ in this case. 
\end{Example}

\section{Lifting Frobenius exact subcategories}\label{Lifting Frobenius exact subcategories}

In this section we fix abelian categories $\cA$ and $\cB$, a faithful functor $f^*\colon \cA\to \cB$ with left adjoint $f_!\colon \cB\to \cA$, and we assume $f_!\dashv f^*$ has a Nakayama functor $\nu\colon \cA\to \cA$.  Our goal is to investigate when the subcategory $(f^*\circ \nu)^{-1}(\Gproj(\cB))\cap \relGproj{P}{\cA}$ is equal to $\Gproj(\cA)$ if $\cA$ and $\cB$ have enough projectives. In the first part we show that $(f^*\circ \nu)^{-1}(\cF)\cap \cX$ is an admissible subcategory of $\Gproj(\cA)$ if $\cX$ is a $P$-admissible subcategory of $\relGproj{P}{\cA}$ and $\cF$ is an admissible subcategory of $\Gproj(\cB)$.

\subsection{\texorpdfstring{$P$}{}-admissible subcategories of \texorpdfstring{$\relGproj{P}{\cA}$}{}}\label{Admissible subcategories}

\begin{Definition}\label{Definition:10}
A full subcategory $\cX\subset \cA$ is called a $P$-\emphbf{admissible subcategory of} $\relGproj{P}{\cA}$ if it is closed under extensions, direct summands, and satisfies the following properties:
\begin{enumerate}
  \item \label{Definition:10,1}$\cX$ contains all the $P$-projective objects of $\cA$;
	\item \label{Definition:10,2}$L_1\nu(X)=0$ for all $X\in \cX$;
  \item \label{Definition:10,3}For all $X\in \cX$ there exists a short exact sequence $0\to X'\xrightarrow{} A\xrightarrow{} X\to 0$ with $A$ being $P$-projective and $X'\in \cX$;
	\item \label{Definition:10,4}For all $X\in \cX$ there exists a short exact sequence $0\to X\xrightarrow{} A\xrightarrow{} X'\to 0$ with $A$ being $P$-projective and $X'\in \cX$. 
\end{enumerate}
\end{Definition}

The following result is immediate from the definition. 

\begin{Proposition}\label{Proposition:2*}
The following hold:
\begin{enumerate}
	\item\label{Proposition:2,1*} $\relGproj{P}{\cA}$ is a $P$-admissible subcategory of $\relGproj{P}{\cA}$;
	\item\label{Proposition:2,2*} Assume $\cX$ is a $P$-admissible subcategory of $\relGproj{P}{\cA}$. Then $\cX\subset \relGproj{P}{\cA}$.
\end{enumerate} 
\end{Proposition}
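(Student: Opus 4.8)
The plan is to handle the two parts separately, using the defining complex of a Gorenstein $P$-projective object (Definition \ref{Definition:7}) as the main tool, read in one direction for part \ref{Proposition:2,1*} and reconstructed in the other direction for part \ref{Proposition:2,2*}.

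For part \ref{Proposition:2,1*}, closure under extensions and direct summands is immediate from part (i) of Proposition \ref{Closure of Gorenstein P-projective}, since $\relGproj{P}{\cA}$ is resolving. To see that every $P$-projective $A$ lies in $\relGproj{P}{\cA}$ (condition \ref{Definition:10,1}), I would exhibit the trivial complex $\cdots \to 0\to A\xrightarrow{1_A}A\to 0\to \cdots$ with $A$ placed in degrees $1$ and $0$: it is exact, has $P$-projective terms, stays exact after applying $\nu$, and has $Z_0=A$. Condition \ref{Definition:10,2} is exactly the vanishing $L_i\nu=0$ on $\relGproj{P}{\cA}$ recorded in Theorem \ref{Theorem:2.5}\ref{Theorem:2.5:1}(a). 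For conditions \ref{Definition:10,3} and \ref{Definition:10,4}, given $X\in \relGproj{P}{\cA}$ with defining complex $A_{\bullet}$, I would factor the differentials through their images to extract the short exact sequences $0\to Z_1(A_{\bullet})\to A_1\to X\to 0$ and $0\to X\to A_0\to Z_{-1}(A_{\bullet})\to 0$; the middle terms $A_1,A_0$ are $P$-projective, and the cycle objects $Z_{\pm 1}(A_{\bullet})$ are again Gorenstein $P$-projective, being the degree-zero cycles of the shifted complexes $A_{\bullet}[\pm 1]$.

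For part \ref{Proposition:2,2*}, let $X\in \cX$; the strategy is to build a complex witnessing $X\in \relGproj{P}{\cA}$ by growing $X$ in both directions. Applying condition \ref{Definition:10,3} repeatedly gives short exact sequences $0\to X_{i+1}\to A_{i+1}\to X_i\to 0$ with $X_0=X$, each $A_{i+1}$ being $P$-projective and each $X_i\in \cX$; applying condition \ref{Definition:10,4} repeatedly gives the analogous sequences $0\to X_{-i}\to A_{-i}\to X_{-i-1}\to 0$. Splicing these along $X$ produces an exact complex $A_{\bullet}$ of $P$-projectives with $Z_0(A_{\bullet})=X$ and all cycles in $\cX$.

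It then remains to verify that $\nu(A_{\bullet})$ is exact, which I expect to be the main obstacle. Since $\nu$ is right adjoint-admitting, hence right exact, it suffices to show that each building block $0\to X_{i+1}\to A_{i+1}\to X_i\to 0$ remains short exact after applying $\nu$, as splicing such sequences yields an exact complex. The only point to check is injectivity of $\nu(X_{i+1})\to \nu(A_{i+1})$, and here I would invoke the long exact sequence of the left derived functors of $\nu$, which exist because $f_!$ is adapted to $\nu$. The relevant segment is $L_1\nu(X_i)\to \nu(X_{i+1})\to \nu(A_{i+1})$, in which the kernel of the right-hand map is the image of the connecting map out of $L_1\nu(X_i)$; since $X_i\in \cX$, condition \ref{Definition:10,2} gives $L_1\nu(X_i)=0$, so the map is injective. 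Hence $\nu(A_{\bullet})$ is exact, and $X=Z_0(A_{\bullet})\in \relGproj{P}{\cA}$ by Definition \ref{Definition:7}. The delicate step is precisely this passage from the pointwise vanishing $L_1\nu(X_i)=0$ to exactness of the whole complex $\nu(A_{\bullet})$ via the derived long exact sequence; the rest is a formal splicing argument.
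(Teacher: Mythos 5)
Your proof is correct and fills in exactly the details the paper treats as immediate: the paper offers no argument beyond ``this is immediate from the definition,'' and your write-up (the trivial two-term complex for condition \ref{Definition:10,1}, the shifted cycles $Z_{\pm 1}(A_{\bullet})$ for conditions \ref{Definition:10,3} and \ref{Definition:10,4}, and the splicing argument with $L_1\nu(X_i)=0$ forcing $\nu$-exactness of each building block) is the standard unfolding of Definitions \ref{Definition:7} and \ref{Definition:10}. Your use of the long exact sequence for $L_{\bullet}\nu$ on an arbitrary short exact sequence is legitimate in this framework and is used in the same way by the paper itself, e.g.\ in the proof of Proposition \ref{Proposition:4}.
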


\begin{Example}\label{Example:4}
Let $\Lambda$ be a finite-dimensional algebra over a field $k$. Furthermore, let $g^*\colon \Lambda\text{-}\Md\to k\text{-}\Md$ be the restriction functor and $g_!=\Lambda \otimes_k -\colon k\text{-}\Md\to \Lambda\text{-}\Md$ its left adjoint. As stated in Example \ref{Example:3}, the adjoint pair $g_!\dashv g^*$ has Nakayama functor 
\[
\nu'=\Hom_k(\Lambda,k)\otimes_{\Lambda}-\colon \Lambda\text{-}\Md\to \Lambda\text{-}\Md
\]
In this case the $P'$-projective objects are just the projective $\Lambda$-modules, where $P':= g_!\circ g^*$. Also, $L_1\nu'(M)= \Tor^{\Lambda}_1(\Hom_k(\Lambda,k),M)=0$ if and only if 
\[
\Hom_k(\Tor^{\Lambda}_1(\Hom_k(\Lambda,k),M),k)\cong \Ext^1_{\Lambda}(M,\Lambda )=0.
\]
In this case $\Ext^1_{\Lambda}(M,\prod\Lambda )\cong \prod \Ext^1_{\Lambda}(M,\Lambda ) = 0$.  Since any projective $\Lambda$-module is a direct summand of a product $\prod \Lambda$ when $\Lambda$ is finite-dimensional, it follows that $L_1\nu'(M)=0$ if and only if $\Ext^1_{\Lambda}(M,Q)=0$ for any $Q\in \Proj (\Lambda\text{-}\Md)$. Hence, the $P'$-admissible subcategories of $\relGproj{P'}{\Lambda\text{-}\Md}$ are precisely the admissible subcategories of $\Gproj(\Lambda\text{-}\Md)$. In particular, it follows that
\[
\Gproj(\Lambda\text{-}\Md)=\relGproj{P'}{\Lambda\text{-}\Md}.
\]
\end{Example} 

In the following we consider the adjunctions 
\begin{align*}
& \adjiso{f^*\circ \nu}{f_!}\colon \cB(f^*\nu(A),B)\xrightarrow{\cong}\cA(A,f_!(B)) \\ & \adjiso{f_!}{f^*}\colon \cA(f_!(B),A)\xrightarrow{\cong}\cB(B,f^*(A))
\end{align*} 
with units and counits
\begin{align*}
& \unit{f^*\circ \nu}{f_!}\colon 1_{\cA}\to f_!\circ f^*\circ \nu \quad \counit{f^*\circ \nu}{f_!}\colon f^*\circ \nu\circ f_! \to 1_{\cB} \\
& \unit{f_!}{f^*}\colon 1_{\cB}\to f^*\circ f_! \quad \counit{f_!}{f^*}\colon f_!\circ f^* \to 1_{\cA}
\end{align*} 
Since $f^*$ is faithful, it follows that $\counit{f_!}{f^*}$ is an epimorphism. 

\begin{Lemma}\label{Lemma:6}
Let $\cX$ be a $P$-admissible subcategory of $\relGproj{P}{\cA}$, and let $X\in \cX$. The following holds:
\begin{enumerate}
	\item\label{Lemma:6,1} $\unit{f^*\circ \nu}{f_!}_X$ is a monomorphism and $\Coker \unit{f^*\circ \nu}{f_!}_X\in \cX$;
	\item\label{Lemma:6,2}$\Ker \counit{f_!}{f^*}_X\in \cX$.
\end{enumerate} 
\end{Lemma}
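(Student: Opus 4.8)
The plan is to handle both parts by the same device used in the proof of Lemma \ref{Lemma:1}: build a morphism of short exact sequences whose left (resp. right) vertical map is the identity, invoke Lemma 5.2 in \cite{Pop73} to turn the resulting pushout/pullback square into a short exact sequence, and then apply closure of $\cX$ under extensions and direct summands. The crucial input in each case is that $\cX$ contains the $P$-projective objects, and these are exactly the summands of objects $f_!(B)$; this lets me use the universal properties of the counit $\counit{f_!}{f^*}$ and the unit $\unit{f^*\circ \nu}{f_!}$ to factor maps out of, resp. into, a $P$-projective.

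For part \ref{Lemma:6,2} I would begin with the short exact sequence $0\to K\to P(X)\xrightarrow{\counit{f_!}{f^*}_X} X\to 0$, where $K=\Ker\counit{f_!}{f^*}_X$ and the counit is an epimorphism since $f^*$ is faithful. Using Definition \ref{Definition:10}\ref{Definition:10,3} I choose a short exact sequence $0\to X'\to A\xrightarrow{g} X\to 0$ with $A$ being $P$-projective and $X'\in\cX$. As $A$ is a summand of some $f_!(B)$, the universal property of the counit of $f_!\dashv f^*$ factors $g$ as $\counit{f_!}{f^*}_X\circ h$ for a suitable $h\colon A\to P(X)$ (corrected by the summand's idempotent); this gives a morphism of short exact sequences that is the identity on the right-hand $X$ and induces $X'\to K$ on kernels. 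By Lemma 5.2 in \cite{Pop73} the left-hand square is a pushout, so $0\to X'\to K\oplus A\to P(X)\to 0$ is exact. Since $X'$ and $P(X)=f_!f^*(X)$ lie in $\cX$ and $\cX$ is closed under extensions, $K\oplus A\in\cX$, whence $K\in\cX$ by closure under direct summands.

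For part \ref{Lemma:6,1} I would dualize. By Definition \ref{Definition:10}\ref{Definition:10,4} pick $0\to X\xrightarrow{u} A\to X'\to 0$ with $A$ being $P$-projective and $X'\in\cX$. Writing $A$ as a summand of $f_!(B)$, the universal property of the unit of $f^*\circ\nu\dashv f_!$ factors $u$ as $v\circ\unit{f^*\circ \nu}{f_!}_X$ for some $v\colon f_!f^*\nu(X)\to A$; since $u$ is a monomorphism, so is $\unit{f^*\circ \nu}{f_!}_X$, which proves the first assertion. Putting $C=\Coker\unit{f^*\circ \nu}{f_!}_X$, the map $v$ assembles into a morphism of short exact sequences from $0\to X\to f_!f^*\nu(X)\to C\to 0$ to $0\to X\to A\to X'\to 0$ that is the identity on the left-hand $X$, so by the dual of Lemma 5.2 in \cite{Pop73} the right-hand square is a pullback and $0\to f_!f^*\nu(X)\to C\oplus A\to X'\to 0$ is exact. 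As $f_!f^*\nu(X)$ is $P$-projective and $X'\in\cX$, closure under extensions gives $C\oplus A\in\cX$, and closure under direct summands gives $C\in\cX$.

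I expect the only delicate point to be the two factorizations through the unit and counit: one must verify that applying the relevant adjunction universal property to a map into/out of an ambient $f_!(B)$, and then composing with the idempotent exhibiting $A$ as a summand, really produces the comparison maps $h$ and $v$ making the squares commute. Once these are established, the pushout/pullback extraction and the closure arguments are routine and parallel the proof of Lemma \ref{Lemma:1}.
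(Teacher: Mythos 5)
Your proof is correct and follows essentially the same route as the paper's: both parts use Definition \ref{Definition:10}\ref{Definition:10,3}/\ref{Definition:10,4} to produce a comparison short exact sequence, factor the map through the counit $\counit{f_!}{f^*}_X$ (resp.\ the unit $\unit{f^*\circ \nu}{f_!}_X$) via the adjunction isomorphism, and extract the bicartesian square via Lemma 5.2 of \cite{Pop73} to conclude by closure under extensions and summands. The only difference is cosmetic: you handle a general $P$-projective $A$ by composing with the idempotent exhibiting it as a summand of $f_!(B)$, whereas the paper takes the middle term to be $f_!(B)$ itself (implicitly absorbing the complement), so your version is if anything slightly more explicit on that point.
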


\begin{proof}
We prove \ref{Lemma:6,1}. Since $X\in \cX$ there exists an exact sequence $0\to X\xrightarrow{i} f_!(B)\xrightarrow{} X'\to 0$ with $X'\in \cX$ and $B\in \cB$. Since $i=f_!((\adjiso{f^*\circ \nu}{f_!})^{-1}(i))\circ \unit{f^*\circ \nu}{f_!}_X$, it follows that $\unit{f^*\circ \nu}{f_!}_X$ is a monomorphism. We therefore have a commutative diagram
\begin{equation*}
\begin{tikzpicture}[description/.style={fill=white,inner sep=2pt}]
\matrix(m) [matrix of math nodes,row sep=3.5em,column sep=5.0em,text height=1.5ex, text depth=0.25ex] 
{ X & f_!f^*\nu(X) & \Coker \unit{f^*\circ \nu}{f_!}_X  \\
  X & f_!(B) & X'  \\};
\path[->]
(m-1-1) edge node[auto] {$\unit{f^*\circ \nu}{f_!}_X$} 	    				    (m-1-2)
(m-1-2) edge node[auto] {$$} 	    											(m-1-3)
(m-2-1) edge node[auto] {$i$} 	    										    (m-2-2)
(m-2-2) edge node[auto] {$$} 	    										  	(m-2-3)

(m-1-1) edge node[auto] {$1_X$} 	    												    (m-2-1)
(m-1-2) edge node[auto] {$f_!((\adjiso{f^*\circ \nu}{f_!})^{-1}(i))$} 	    						(m-2-2)
(m-1-3) edge node[auto] {$$} 	    								    					(m-2-3);	
\end{tikzpicture}
\end{equation*}
where the rows are short exact sequences. By the dual of Lemma 5.2 in \cite{Pop73} it follows that the right square is a pushforward and a pullback square. Hence we get a short exact sequence
\[
0\to f_!f^*\nu(X)\to f_!(B)\oplus \Coker \unit{f^*\circ \nu}{f_!}_X \to X'\to 0.
\] 
Since $\cX$ is closed under extensions and direct summands, it follows that $\Coker \unit{f^*\circ \nu}{f_!}_X\in \cX$. 

For \ref{Lemma:6,2}, choose an exact sequence $0\to X''\xrightarrow{} f_!(B')\xrightarrow{p} X\to 0$ with $X''\in \cX$ and $B'\in \cB$. We then get a commutative diagram

\begin{equation*}
\begin{tikzpicture}[description/.style={fill=white,inner sep=2pt}]
\matrix(m) [matrix of math nodes,row sep=3.5em,column sep=5.0em,text height=1.5ex, text depth=0.25ex] 
{ \Ker \counit{f_!}{f^*}_X & f_!f^*(X) & X  \\
  X'' & f_!(B') & X  \\};
\path[->]
(m-1-1) edge node[auto] {$$} 	    													    (m-1-2)
(m-1-2) edge node[auto] {$\counit{f_!}{f^*}_X$} 	    										(m-1-3)
(m-2-1) edge node[auto] {$$} 	    													    (m-2-2)
(m-2-2) edge node[auto] {$p$} 	    										  			  (m-2-3)

(m-2-1) edge node[auto] {$$} 	    												      (m-1-1)
(m-2-2) edge node[auto] {$f_!(\adjiso{f_!}{f^*}(p))$} 	    							  (m-1-2)
(m-2-3) edge node[auto] {$1_X$} 	    								    			(m-1-3);	
\end{tikzpicture}
\end{equation*}
where the rows are short exact sequences. The left square is a pushforward and a pullback square, and therefore gives rise to an exact sequence
\[
0\to X''\to f_!(B')\oplus \Ker \counit{f_!}{f^*}_X \to f_!f^*(X)\to 0.
\]
Since $\cX$ is closed under extensions and direct summands, it follows that $\Ker \counit{f_!}{f^*}_X\in \cX$.
\end{proof}

\begin{Lemma}\label{Lemma:10}
Let $\cX$ be a $P$-admissible subcategory of $\relGproj{P}{\cA}$. The following holds:
\begin{enumerate}
\item\label{Lemma:10:1} Let $s\colon X\to f_!(B)$ be a morphism in $\cA$ with $X\in \cX$ and $B\in \cB$. Assume $(\adjiso{f^*\circ \nu}{f_!})^{-1}(s)\colon f^*\nu(X)\to B$ is a monomorphism. Then $s$ is a monomorphism and $\Coker s\in \cX$;
\item\label{Lemma:10:2} Let $s'\colon f_!(B)\to X$ be a morphism in $\cA$ with $X\in \cX$ and $B\in \cB$. Assume that $\adjiso{f_!}{f^*}(s')\colon B\to f^*(X)$ is an epimorphism. Then $s'$ is an epimorphism and $\Ker s'\in \cX$.
\end{enumerate}
\end{Lemma}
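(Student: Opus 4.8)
The plan is to prove part \ref{Lemma:10:1} directly and then obtain part \ref{Lemma:10:2} by the parallel argument with units replaced by counits. The starting point for \ref{Lemma:10:1} is the factorization of $s$ through the unit of $f^*\circ\nu\dashv f_!$: writing $g=(\adjiso{f^*\circ\nu}{f_!})^{-1}(s)\colon f^*\nu(X)\to B$, the adjunction formula gives
\[
s=f_!(g)\circ\unit{f^*\circ\nu}{f_!}_X,
\]
exactly as in the proof of Lemma \ref{Lemma:6}. The crucial structural observation is that $f_!$ is \emph{exact}: it sits in the adjunction chain $f^*\circ\nu\dashv f_!\dashv f^*$, so it is simultaneously a left adjoint (hence right exact) and a right adjoint (hence left exact). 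In particular $f_!$ preserves monomorphisms, so the hypothesis that $g$ is monic forces $f_!(g)$ to be monic; since $\unit{f^*\circ\nu}{f_!}_X$ is monic by Lemma \ref{Lemma:6}\ref{Lemma:6,1}, the composite $s$ is monic, which is the first assertion.

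For the cokernel I would assemble two short exact sequences. First, Lemma \ref{Lemma:6}\ref{Lemma:6,1} supplies $0\to X\xrightarrow{\unit{f^*\circ\nu}{f_!}_X} f_!f^*\nu(X)\to C\to 0$ with $C\in\cX$. Second, applying the exact functor $f_!$ to $0\to f^*\nu(X)\xrightarrow{g}B\to N\to 0$, where $N=\Coker g$, yields $0\to f_!f^*\nu(X)\xrightarrow{f_!(g)}f_!(B)\to f_!(N)\to 0$, using that $f_!$ preserves cokernels. For the composite of the two monomorphisms $\unit{f^*\circ\nu}{f_!}_X$ and $f_!(g)$ there is the standard short exact sequence of cokernels
\[
0\to C\to \Coker s\to f_!(N)\to 0.
\]
Now $f_!(N)$ is $P$-projective, hence lies in $\cX$ by Definition \ref{Definition:10}\ref{Definition:10,1}, and $C\in\cX$; since $\cX$ is closed under extensions, $\Coker s\in\cX$.

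For part \ref{Lemma:10:2} I would run the dual argument. Writing $u=\adjiso{f_!}{f^*}(s')$, the formula $s'=\counit{f_!}{f^*}_X\circ f_!(u)$ exhibits $s'$ as a composite: $\counit{f_!}{f^*}_X$ is an epimorphism because $f^*$ is faithful, and $f_!(u)$ is an epimorphism because $u$ is epic and $f_!$ is exact, so $s'$ is epic. Applying $f_!$ to $0\to M\to B\xrightarrow{u}f^*(X)\to 0$, with $M=\Ker u$, and combining the resulting short exact sequence with $0\to K\to f_!f^*(X)\xrightarrow{\counit{f_!}{f^*}_X}X\to 0$, where $K=\Ker\counit{f_!}{f^*}_X\in\cX$ by Lemma \ref{Lemma:6}\ref{Lemma:6,2}, produces via the kernel-of-a-composite sequence $0\to f_!(M)\to\Ker s'\to K\to 0$. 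Again $f_!(M)$ is $P$-projective and $K\in\cX$, so closure under extensions gives $\Ker s'\in\cX$.

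The only real subtlety, and the step I would be most careful about, is the exactness of $f_!$: the whole argument hinges on $f_!$ preserving both monomorphisms and epimorphisms, and this is precisely what the two-sided adjunction $f^*\circ\nu\dashv f_!\dashv f^*$ buys us (without it $f_!(g)$ need not be monic and the argument collapses). Everything else is bookkeeping with the standard cokernel/kernel sequences of a composite, together with the fact that objects of the form $f_!(-)$ are $P$-projective and the closure of $\cX$ under extensions.
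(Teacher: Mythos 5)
Your proof is correct and follows essentially the same route as the paper: both factor $s$ as $f_!((\adjiso{f^*\circ \nu}{f_!})^{-1}(s))\circ \unit{f^*\circ \nu}{f_!}_X$, arrive at the short exact sequence $0\to \Coker \unit{f^*\circ \nu}{f_!}_X\to \Coker s\to f_!(\Coker (\adjiso{f^*\circ \nu}{f_!})^{-1}(s))\to 0$, and conclude using $P$-projectivity of objects in the image of $f_!$ together with closure of $\cX$ under extensions, the only cosmetic difference being that you invoke the standard cokernel-of-a-composite sequence where the paper runs the snake lemma on an explicit commutative diagram (and dually for part (ii), which the paper dismisses as dual). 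Your explicit appeal to the exactness of $f_!$, coming from the adjunction chain $f^*\circ \nu\dashv f_!\dashv f^*$, is a welcome clarification of a step the paper leaves implicit when it asserts that $s$ is a monomorphism.
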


\begin{proof}
We only prove part \ref{Lemma:10:1}, part \ref{Lemma:10:2} is proved dually. Consider the commutative diagram
\[
\begin{tikzpicture}[description/.style={fill=white,inner sep=2pt}]
\matrix(m) [matrix of math nodes,row sep=3.5em,column sep=5.0em,text height=1.5ex, text depth=0.25ex] 
{ X & f_!f^*\nu(X) & \Coker \unit{f^*\circ \nu}{f_!}_X  \\
   X &  f_!(B)   & \Coker s       \\};
\path[->]
(m-1-1) edge node[auto] {$\unit{f^*\circ \nu}{f_!}_X$} 	    						(m-1-2)
(m-1-2) edge node[auto] {$$} 	    												(m-1-3)
(m-2-1) edge node[auto] {$s$} 	    												(m-2-2)
(m-2-2) edge node[auto] {$$} 	    												(m-2-3)

(m-1-1) edge node[auto] {$1_X$} 	    								   					  (m-2-1)
(m-1-2) edge node[auto] {$f_!((\adjiso{f^*\circ \nu}{f_!})^{-1}(s))$} 	    									  (m-2-2)
(m-1-3) edge node[auto] {$t$} 	    								 						  	(m-2-3);	
\end{tikzpicture}
\]
where $t$ is induced from the commutativity of the left square. Since $\unit{f^*\circ \nu}{f_!}_X$ is a monomorphism by Lemma \ref{Lemma:6}, we get that $s$ is a monomorphism. Hence, the upper and lower row are short exact sequences. Therefore, by the snake lemma $t$ is a monomorphism and
\[
\Coker t \cong \Coker f_!((\adjiso{f^*\circ \nu}{f_!})^{-1}(s)) \cong f_! (\Coker (\adjiso{f^*\circ \nu}{f_!})^{-1}(s))
\]
Hence, we get an exact sequence
\[
0\to \Coker \unit{f^*\circ \nu}{f_!}_X\xrightarrow{t} \Coker s\to f_! (\Coker (\adjiso{f^*\circ \nu}{f_!})^{-1}(s))\to 0. 
\]
Since $\cX$ is closed under extensions, $f_! (\Coker (\adjiso{f^*\circ \nu}{f_!})^{-1}(s))$ is $P$-projective, and $\Coker \unit{f^*\circ \nu}{f_!}_X\in \cX$ by Lemma \ref{Lemma:6}, we get that $\Coker s\in \cX$. 
\end{proof}

\begin{Example}\label{Example:5}
Let $k$ be a field, let $\Lambda_1$ be a finite-dimensional $k$-algebra, and let $\Lambda_2$ be a $k$-algebra which is left coherent. Let $f_!\dashv f^*$ be the adjoint pair with Nakayama functor $\nu$ as in Example \ref{finitely presented}. Let $\cF\subset \Gproj(\Lambda_1\text{-}\Md)$ be an admissible subcategory. We claim that the category 
\[
\cX =\{ M\in (\Lambda_1\otimes_k \Lambda_2)\text{-}\md \mid\text{ } {}_{\Lambda_1}M\in \cF\}
\]
is a $P$-admissible subcategory of $\relGproj{P}{(\Lambda_1\otimes_k \Lambda_2)\text{-}\md}$, where $P:=f_!\circ f^*$: Indeed, the $P$-projective objects are summands of modules of the form $\Lambda_1\otimes_k M$. Since they are projective when restricted to $\Lambda_1\text{-}\Md$, they are contained in $\cX$, which shows \ref{Definition:10,1}. Furthermore, for $M\in \cX$ we have $L_1\nu(M)= \Tor^{\Lambda_1}_1(\Hom_k(\Lambda_1,k),M)$, and this is $0$ since ${}_{\Lambda_1}M\in \cF \subset \Gproj(\Lambda_1\text{-}\Md)$ and $\Hom_k(\Tor^{\Lambda_1}_1(\Hom_k(\Lambda_1,k),M),k)\cong \Ext^1_{\Lambda_1}(M,\Lambda_1)$. This shows \ref{Definition:10,2}. Also, $\cX$ is closed under kernels of epimorphisms by Lemma \ref{Lemma:1}, and hence it satisfies \ref{Definition:10,3}. It only remains to show \ref{Definition:10,4}: By Example \ref{Example:4} we know that $\cF$ is a $P'$-admissible subcategory of $\relGproj{P'}{\Lambda_1\text{-}\Md}$, where $P'=g_!\circ g^*$ and $g^*\colon \Lambda_1\text{-}\Md\to k\text{-}\Md$ is the restriction with left adjoint $g_!=\Lambda_1\otimes_k-\colon k\text{-}\Md\to \Lambda_1\text{-}\Md$. Consider the exact sequence
\[
0\to M\xrightarrow{\unit{f^*\circ \nu}{f_!}_M} f_!f^*\nu (M) \to \Coker \unit{f^*\circ \nu}{f_!}_M \to 0
\] 
of $\Lambda_1\otimes_k \Lambda_2$-modules, Restricting to $\Lambda_1\text{-}\Md$ gives the exact sequence 
\[
0\to {}_{\Lambda_1}M\xrightarrow{\unit{g^*\circ \nu'}{g_!}_{{}_{\Lambda_1}M}} g_!g^*\nu' ({}_{\Lambda_1}M) \to \Coker \unit{g^*\circ \nu'}{g_!}_{{}_{\Lambda_1}M} \to 0
\]
 It follows from Lemma \ref{Lemma:6} that $\Coker \unit{g^*\circ \nu'}{g_!}_{{}_{\Lambda_1}M}\in \cF$. Therefore, we have that $\Coker \unit{f^*\circ \nu}{f_!}_M\in \cX$. This implies that $\cX$ satisfies \ref{Definition:10,4}, which proves the claim. In particular, $\cX$ is a $P$-admissible subcategory of $\relGproj{P}{(\Lambda_1\otimes_k \Lambda_2)\text{-}\md}$ when $\cF =\Gproj (\Lambda_1 \text{-}\Md)$ or $\cF= \Proj (\Lambda_1 \text{-}\Md  )$.

Now assume $\cF =\Gproj (\Lambda_1 \text{-}\Md)$. We claim that $\cX=\relGproj{P}{(\Lambda_1\otimes_k \Lambda_2)\text{-}\md}$. By the argument above we know that $\cX\subset \relGproj{P}{(\Lambda_1\otimes_k \Lambda_2)\text{-}\md}$, so we only need to show the other inclusion. Assume $M\in \relGproj{P}{(\Lambda_1\otimes_k \Lambda_2)\text{-}\md}$, and let $A_{\bullet}$ be an exact sequence in $(\Lambda_1\otimes_k \Lambda_2)\text{-}\md$ with $Z_0(A_{\bullet})=M$ as in Definition \ref{Definition:7}. Note that the components of ${}_{\Lambda_1}A_{\bullet}$ are projective $\Lambda_1$-modules. Furthermore, since the sequence $\nu(A_{\bullet})$ is exact, the sequence 
\[
\Hom_k(\nu(A_{\bullet}),k)=\Hom_k(\Hom_k(\Lambda_1,k)\otimes_{\Lambda_1}A_{\bullet},k) \cong \Hom_{\Lambda_1}(A_{\bullet},\Lambda_1)
\]
is exact. Since any projective $\Lambda_1$-module is a summand of a product of $\Lambda_1$, and $\Hom_{\Lambda_1}(A_{\bullet},\prod \Lambda_1)\cong \prod \Hom_{\Lambda_1}(A_{\bullet},\Lambda_1)$ is exact, it follows that ${}_{\Lambda_1}A_{\bullet}$ is a totally acyclic complex of $\Lambda_1$-modules. This shows that ${}_{\Lambda_1}M\in \Gproj(\Lambda_1\text{-}\Md)$, and the claim follows.
\end{Example} 

\begin{Example}\label{Example:6}
Let $k$ be a field, let $\Lambda_1$ be a finite-dimensional $k$-algebra, and let $\Lambda_2$ be a $k$-algebra. Let $f_!\dashv f^*$ be the adjoint pair with Nakayama functor $\nu$ as in Example \ref{Example:3}. By a similar argument as in Example \ref{Example:5} we get that if $\cF\subset \Lambda_1\text{-}\Md$ is an admissible subcategory of $\Gproj(\Lambda_1\text{-}\Md)$, then 
\[
\cX =\{ M\in (\Lambda_1\otimes_k \Lambda_2)\text{-}\Md \mid \text{ } {}_{\Lambda_1}M\in \cF\}
\]
is a $P$-admissible subcategory of $\relGproj{P}{(\Lambda_1\otimes_k \Lambda_2)\text{-}\Md}$, where $P=f_!\circ f^*$. Also, we get that
\[
\relGproj{P}{(\Lambda_1\otimes_k\Lambda_2)\text{-}\Md) =\{ M\in (\Lambda_1\otimes_k \Lambda_2)\text{-}\Md \mid \text{ } {}_{\Lambda_1}M\in \Gproj(\Lambda_1\text{-}\Md}\}.
\]

\end{Example}

\subsection{Lifting admissible subcategories}\label{Lifting admissible subcategories}

Note that $f_!$ preserves projective objects since it has an exact right adjoint. In fact, we have the following result.

\begin{Lemma}
Assume $\cB$ has enough projectives. Then the full subcategory 
\[
f_!(\Proj(\cB)):= \{f_!(Q)\mid Q\in \Proj(\cB)\}
\] 
is generating in $\cA$. In particular, $\cA$ has enough projectives.
\end{Lemma}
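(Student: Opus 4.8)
The plan is to reduce everything to two facts already recorded above: the counit $\counit{f_!}{f^*}\colon f_!\circ f^*\to 1_{\cA}$ is an epimorphism (noted earlier, since $f^*$ is faithful), and $f_!$, being a left adjoint, is right exact and hence preserves epimorphisms. Combining these with the hypothesis that $\cB$ has enough projectives immediately produces the desired covers.

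First I would fix an arbitrary object $A\in\cA$. Since $\cB$ has enough projectives, I can choose a projective $Q\in\Proj(\cB)$ together with an epimorphism $\pi\colon Q\to f^*(A)$ in $\cB$. Applying the right exact functor $f_!$ then yields an epimorphism $f_!(\pi)\colon f_!(Q)\to f_!f^*(A)$ in $\cA$.

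Next I would compose with the counit to form $f_!(Q)\xrightarrow{f_!(\pi)} f_!f^*(A)\xrightarrow{\counit{f_!}{f^*}_A} A$. Both maps are epimorphisms, so the composite is an epimorphism, and its source $f_!(Q)$ lies in $f_!(\Proj(\cB))$. Since $A$ was arbitrary, this shows that $f_!(\Proj(\cB))$ is generating in $\cA$.

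Finally, for the ``in particular'' clause I would invoke the observation preceding the lemma that $f_!$ preserves projective objects, because its right adjoint $f^*$ is exact (being simultaneously a left and a right adjoint in the chain $f^*\circ\nu\dashv f_!\dashv f^*\dashv f_*\dashv f^*\circ\nu^-$). Hence each $f_!(Q)$ with $Q\in\Proj(\cB)$ is a projective object of $\cA$, and the epimorphism $f_!(Q)\to A$ constructed above exhibits $\cA$ as having enough projectives. There is no serious obstacle in this argument; the only points requiring care are recalling that faithfulness of $f^*$ forces the counit to be epic and that right exactness of $f_!$ transports the surjection $\pi$ to a surjection, both of which are already established in the text.
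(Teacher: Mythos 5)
Your proof is correct and is essentially the paper's own argument: choose an epimorphism $Q\to f^*(A)$ with $Q$ projective, apply $f_!$, and compose with the counit $\counit{f_!}{f^*}_A$, which is an epimorphism because $f^*$ is faithful. The ``in particular'' clause likewise matches the remark preceding the lemma, namely that $f_!$ preserves projectives since its right adjoint $f^*$ is exact.
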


\begin{proof}
For $A\in \cA$ choose an epimorphism $Q\xrightarrow{p} f^*(A)$ in $\cB$ with $Q$ projective. The composition $f_!(Q)\xrightarrow{f_!(p)} f_! f^*(A)\xrightarrow{\counit{f_!}{f^*}_A} A$ is then an epimorphism in $\cA$. This proves the claim.
\end{proof}

For the remainder of this section we assume $\cB$ has enough projective objects. Furthermore, we fix a $P$-admissible subcategory $\cX$ of $\relGproj{P}{\cA}$ and an admissible subcategory $\cF$ of $\Gproj(\cB)$. Let 
\[
(f^*\circ \nu)^{-1}(\cF):= \{ A\in \cA \mid f^*\nu(A)\in \cF\}
\] 
Our goal is to show that $(f^*\circ \nu)^{-1}(\cF)\cap \cX$ is an admissible subcategory of $\Gproj(\cA)$.

\begin{Lemma}\label{Lemma:12}
The category $(f^*\circ \nu)^{-1}(\cF)\cap \cX$ is closed under extensions and direct summands in $\cA$.
\end{Lemma}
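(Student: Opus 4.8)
The plan is to establish the two closure properties separately: closure under direct summands follows from a direct additivity argument, while closure under extensions is obtained by transporting the defining short exact sequence first through $\nu$ and then through $f^*$, landing in $\cF$.

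For direct summands, suppose $A\cong A_1\oplus A_2$ lies in $(f^*\circ \nu)^{-1}(\cF)\cap \cX$. Since $\cX$ is closed under direct summands by virtue of being $P$-admissible, both summands belong to $\cX$. Moreover $f^*$ and $\nu$ are additive, so $f^*\nu(A)\cong f^*\nu(A_1)\oplus f^*\nu(A_2)$; as $f^*\nu(A)\in\cF$ and $\cF$ is closed under direct summands, we get $f^*\nu(A_1)\in \cF$, hence $A_1\in (f^*\circ \nu)^{-1}(\cF)\cap \cX$. This step is routine.

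For extensions, let $0\to A'\to A\to A''\to 0$ be short exact in $\cA$ with $A',A''\in (f^*\circ \nu)^{-1}(\cF)\cap \cX$. Since $\cX$ is closed under extensions, $A\in \cX$, so it remains to check $f^*\nu(A)\in \cF$. As $\nu$ is right exact and its left derived functors exist, the long exact sequence reads
\[
\cdots \to L_1\nu(A'')\to \nu(A')\to \nu(A)\to \nu(A'')\to 0.
\]
Because $A''\in \cX$, property \ref{Definition:10,2} of Definition \ref{Definition:10} gives $L_1\nu(A'')=0$, so the connecting map vanishes and $\nu(A')\to \nu(A)$ is a monomorphism, yielding a short exact sequence $0\to \nu(A')\to \nu(A)\to \nu(A'')\to 0$. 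Now $f^*$ sits in the adjunction chain $f^*\circ \nu \dashv f_!\dashv f^*\dashv f_*\dashv f^*\circ \nu^-$, hence has both a left and a right adjoint and is therefore exact; applying it gives
\[
0\to f^*\nu(A')\to f^*\nu(A)\to f^*\nu(A'')\to 0.
\]
Since $f^*\nu(A'),f^*\nu(A'')\in \cF$ by hypothesis and $\cF$ is closed under extensions, we conclude $f^*\nu(A)\in \cF$, i.e. $A\in (f^*\circ \nu)^{-1}(\cF)$. Together with $A\in \cX$ this proves closure under extensions.

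The only delicate point is guaranteeing that applying the merely right exact functor $\nu$ produces a genuine short exact sequence rather than just a right exact one; this is precisely where the $P$-admissibility hypothesis $L_1\nu(A'')=0$ enters. Everything else reduces to additivity, the exactness of $f^*$ (a consequence of its having both adjoints), and the closure properties built into the definitions of $\cX$ and $\cF$.
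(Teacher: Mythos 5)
Your proof is correct and follows essentially the same route as the paper: the paper likewise dismisses direct summands as immediate, obtains the short exact sequence $0\to f^*\nu(A')\to f^*\nu(A)\to f^*\nu(A'')\to 0$ from the vanishing $L_1\nu(A'')=0$ guaranteed by condition \ref{Definition:10,2} of Definition \ref{Definition:10}, and concludes by closure of $\cF$ under extensions. Your added remarks on the long exact sequence of left derived functors and on the exactness of $f^*$ (via the adjunction chain $f_!\dashv f^*\dashv f_*$) merely make explicit what the paper uses tacitly.
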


\begin{proof}
It is immediate that $(f^*\circ \nu)^{-1}(\cF)\cap \cX$ is closed under direct summands. We show that it is closed under extensions. Let $0\to A_1\xrightarrow{s} A_2\xrightarrow{t} A_3\to 0$ be an exact sequence in $\cA$ with $A_1,A_3\in (f^*\circ \nu)^{-1}(\cF)\cap \cX$. Since $\cX$ is closed under extensions, it follows that $A_2\in \cX$. Also, since $L_1\nu(A_3)=0$, we have an exact sequence
\[
0\to f^*\nu(A_1)\xrightarrow{f^*\nu(s)} f^*\nu(A_2)\xrightarrow{f^*\nu(t)} f^*\nu(A_3)\to 0
\]
in $\cB$. Since $\cF$ is closed under extensions, it follows that $f^*\nu(A_2)\in \cF$. This proves the claim.
\end{proof}

Since $(f^*\circ \nu)^{-1}(\cF)\cap \cX$ is closed under extensions, it inherits an exact structure from $\cA$.

\begin{Lemma}\label{Lemma:13}
The category $(f^*\circ \nu)^{-1}(\cF)\cap \cX$ contains the projective objects in $\cA$.
\end{Lemma}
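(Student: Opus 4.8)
The plan is to identify the projective objects of $\cA$ concretely and then verify the two defining membership conditions for $(f^*\circ \nu)^{-1}(\cF)\cap \cX$ separately. First I would recall that, by the lemma preceding Lemma \ref{Lemma:12}, the subcategory $f_!(\Proj(\cB))$ is generating in $\cA$, and its objects are projective (since $f^*$ is exact, being both right adjoint to $f_!$ and left adjoint to $f_*$, the functor $f_!$ preserves projectives). Since $\cA$ has enough projectives, any projective $A\in \cA$ admits an epimorphism $f_!(Q)\to A$ with $Q\in \Proj(\cB)$, and this epimorphism splits; hence every projective object of $\cA$ is a direct summand of some $f_!(Q)$, and in particular is $P$-projective. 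Because $\cX$ contains all $P$-projective objects by Definition \ref{Definition:10}\ref{Definition:10,1}, this immediately gives $\Proj(\cA)\subset \cX$.

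For membership in $(f^*\circ \nu)^{-1}(\cF)$, the key observation is that $f^*\circ \nu$ preserves projective objects. This I would deduce from the adjunction chain $f^*\circ \nu \dashv f_!\dashv f^*\dashv f_*\dashv f^*\circ \nu^-$: the functor $f_!$ is simultaneously a left adjoint (of $f^*$) and a right adjoint (of $f^*\circ \nu$), hence both right and left exact, i.e.\ exact. A left adjoint whose right adjoint is exact preserves projectives, so $f^*\circ \nu$, being left adjoint to the exact functor $f_!$, preserves projectives. Consequently, for $A\in \Proj(\cA)$ the object $f^*\nu(A)$ is projective in $\cB$, and since $\cF$ contains all projective objects of $\cB$ by Definition \ref{Definition:1.5}\ref{Definition:1.5,1}, we obtain $A\in (f^*\circ \nu)^{-1}(\cF)$.

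Combining the two inclusions yields $\Proj(\cA)\subset (f^*\circ \nu)^{-1}(\cF)\cap \cX$, as desired. The only genuine content in the argument is the exactness of $f_!$ and the resulting fact that $f^*\circ \nu$ preserves projectives; once this is in hand everything else is formal, so I expect no serious obstacle.
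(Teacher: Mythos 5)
Your proposal is correct and follows essentially the same route as the paper: both arguments reduce to the adjunction $f^*\circ\nu\dashv f_!$ together with the exactness of $f_!$ (equivalently of $f^*\circ f_!$) to get projectivity in $\cB$, and to the fact that every projective of $\cA$ is a summand of some $f_!(Q)$, hence $P$-projective and in $\cX$. The only cosmetic difference is that you prove outright that $f^*\circ\nu$ preserves all projectives, whereas the paper checks projectivity of $f^*\nu f_!(Q)$ via $\cB(f^*\nu f_!(Q),-)\cong\cB(Q,f^*f_!(-))$ and then passes to direct summands.
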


\begin{proof}
Let $Q\in \cB$ be projective. Then $f^*\nu f_! (Q)$ is projective since the functor $\cB(f^*\nu f_! (Q),-)\cong \cB(Q,f^*f_!(-))$ is exact. Since $\cX$ contains all the $P$-projective objects of $\cA$ and $\cF$ contains all the projective objects of $\cB$, it follows that $f_!(Q)\in (f^*\circ \nu)^{-1}(\cF)\cap \cX$. Since any projective object in $\cA$ is a summand of an object of the form $f_!(Q)$, the claim follows. 
\end{proof}

\begin{Lemma}\label{Lemma:15}
We have $\Ext^{i}_{\cA}(A,Q)=0$ for all $i>0$, $A\in (f^*\circ \nu)^{-1}(\cF)\cap \cX$, and $Q\in \cA$ projective. 
\end{Lemma}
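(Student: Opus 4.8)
We must show that $\Ext^i_{\cA}(A,Q)=0$ for all $i>0$, whenever $A\in (f^*\circ \nu)^{-1}(\cF)\cap \cX$ and $Q$ is projective in $\cA$. The plan is to reduce this to the corresponding vanishing in $\cB$, using the adjunctions and the good homological behaviour of the objects involved. The central observation is that $A$ lies in $\cX$, so by Definition \ref{Definition:10}\ref{Definition:10,4} there is a short exact sequence $0\to A\to A^0\to X\to 0$ with $A^0$ being $P$-projective and $X\in \cX$; iterating, we obtain a long exact sequence $0\to A\to A^0\to A^1\to \cdots$ with each $A^j$ being $P$-projective and all the syzygies lying in $\cX$. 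Since $L_1\nu(X)=0$ for every $X\in \cX$, applying $\nu$ (hence $f^*\circ\nu$) to this resolution keeps it exact, and by Lemma \ref{Lemma:6} each step lifts compatibly on the $\cF$-side. The main point is that this coresolution of $A$ by $P$-projectives behaves like a coresolution by $f^*\circ\nu$-injectives after applying $f^*\circ\nu$.

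\smallskip

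First I would establish the case $i=1$ directly, since the higher cases reduce to it by dimension shifting. For $Q$ projective in $\cA$ it suffices, as in the proof of Lemma \ref{Lemma:13}, to treat $Q=f_!(P)$ with $P\in\Proj(\cB)$, because every projective of $\cA$ is a summand of such an object and $\Ext$ respects summands. The key adjunction is
\[
\adjiso{f^*\circ \nu}{f_!}\colon \cB(f^*\nu(A),P)\xrightarrow{\cong}\cA(A,f_!(P)),
\]
which on the level of derived functors should relate $\Ext^i_{\cA}(A,f_!(P))$ to $\Ext^i_{\cB}(f^*\nu(A),P)$. Concretely, I would compute $\Ext^1_{\cA}(A,f_!(P))$ using the short exact sequence $0\to A\to A^0\to X\to 0$: the connecting map identifies a class in $\Ext^1_{\cA}(A,f_!(P))$ with the obstruction to extending a morphism $A\to f_!(P)$ along $A\to A^0$. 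Transporting this along the adjunction isomorphism, and using that applying $f^*\circ\nu$ to the sequence yields the exact sequence $0\to f^*\nu(A)\to f^*\nu(A^0)\to f^*\nu(X)\to 0$ in $\cB$, the obstruction becomes a class in $\Ext^1_{\cB}(f^*\nu(A),P)$; but $f^*\nu(A)\in\cF$ and $P\in\Proj(\cB)$, so this vanishes by Definition \ref{Definition:1.5}\ref{Definition:1.5,2}.

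\smallskip

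For the inductive step I would argue as follows. Let $X=\Coker(A\to A^0)\in\cX$, which moreover lies in $(f^*\circ\nu)^{-1}(\cF)$: indeed $f^*\nu(X)$ fits in $0\to f^*\nu(A)\to f^*\nu(A^0)\to f^*\nu(X)\to 0$ with the first two terms in $\cF$, and since $\cF$ is a resolving, hence coresolving within $\Gproj(\cB)$, subcategory closed under the relevant cokernels, $f^*\nu(X)\in\cF$. Thus $X\in (f^*\circ \nu)^{-1}(\cF)\cap\cX$ as well, so the inductive hypothesis applies to $X$. Since $A^0$ is $P$-projective, hence a summand of some $f_!(B)$ and therefore projective-acyclic for the present purpose, the long exact $\Ext$ sequence associated to $0\to A\to A^0\to X\to 0$ gives
\[
\Ext^i_{\cA}(X,Q)\xrightarrow{\cong}\Ext^{i+1}_{\cA}(A,Q)\quad\text{for }i\geq 1,
\]
and the $i=1$ case already handled together with this dimension shift yields the vanishing for all $i>0$ by induction on $i$.

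\smallskip

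\textbf{The main obstacle} I expect is making rigorous the identification of $\Ext^i_{\cA}(A,f_!(P))$ with $\Ext^i_{\cB}(f^*\nu(A),P)$ through the adjunction $f^*\circ\nu\dashv f_!$: the adjunction is an isomorphism of Hom-groups, but $f^*\circ\nu$ is only right exact, not exact, so it does not automatically commute with $\Ext$. The clean route is to avoid this identification in higher degrees altogether and instead push everything through the dimension-shift in $\cA$, invoking the adjunction isomorphism only at the level of $\Hom$ (degree zero) to settle $i=1$. For that base case the delicate verification is that the coboundary obstruction genuinely transports across the adjunction and lands in the $\Ext^1_{\cB}$ group that $\cF$ kills; here the functoriality of $\adjiso{f^*\circ \nu}{f_!}$ in both variables, combined with the exactness of $f^*\circ\nu$ on sequences with cokernel in $\cX$ (Definition \ref{Definition:10}\ref{Definition:10,2}), does the work, and I would check that the $P$-projective term $A^0$ is sent by $f^*\nu$ to a projective of $\cB$ exactly as in Lemma \ref{Lemma:13}.
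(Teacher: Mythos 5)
Your proposal has a genuine gap, and it sits precisely in the step you retreated to: the dimension shift inside $\cA$ along sequences with $P$-projective middle term. $P$-projective objects are \emph{not} $\Ext_{\cA}$-acyclic against projectives of $\cA$. Indeed, since $f_!$ is exact and preserves projectives, a projective resolution $Q_\bullet\to B$ in $\cB$ yields the projective resolution $f_!(Q_\bullet)\to f_!(B)$ in $\cA$, whence $\Ext^i_{\cA}(f_!(B),f_!(Q'))\cong\Ext^i_{\cB}(B,f^*f_!(Q'))$, which is nonzero for general $B$ (already for $\cC=k\bA_2$ this is $\Ext^i_{\cB}(B_1,Q_1)\oplus\Ext^i_{\cB}(B_2,Q_1\oplus Q_2)$); were it zero, the hypothesis $f^*\nu(A)\in\cF$ in Lemma \ref{Lemma:15} would be redundant, since every object of $\cX$ admits such resolutions. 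So your claimed isomorphism $\Ext^i_{\cA}(X,Q)\cong\Ext^{i+1}_{\cA}(A,Q)$ has no basis --- and its direction is also inverted: from $0\to A\to A^0\to X\to 0$ the connecting maps run $\Ext^i_{\cA}(A,Q)\to\Ext^{i+1}_{\cA}(X,Q)$, so even granting acyclicity of $A^0$ you would get $\Ext^i_{\cA}(A,Q)\cong\Ext^{i+1}_{\cA}(X,Q)$, pushing the problem to ever higher degrees instead of reducing it to $i=1$. The base case suffers from the same confusion: the obstruction to extending $g\colon A\to f_!(Q')$ along $A\to A^0$ is the image of $g$ under the connecting map into $\Ext^1_{\cA}(X,f_!(Q'))$, not a class in $\Ext^1_{\cA}(A,f_!(Q'))$; killing it proves surjectivity of $\cA(A^0,f_!(Q'))\to\cA(A,f_!(Q'))$ and says nothing about the group you must annihilate. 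Finally, your argument that the cosyzygy $X$ stays in $(f^*\circ\nu)^{-1}(\cF)$ needs both that $f^*\nu(A^0)\in\cF$ (false for a general $P$-projective $A^0$; compare Lemma \ref{Lemma:16}, which must choose a \emph{particular} embedding into $f_!(Q')$ with $Q'$ projective to get this) and that $\cF$ is closed under cokernels of monomorphisms --- but resolving subcategories are closed under kernels of epimorphisms, not cokernels of monomorphisms: any non-projective object of projective dimension one is a cokernel of a monomorphism between projectives, yet is not Gorenstein projective.

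Ironically, the identification you flagged as the main obstacle and deliberately avoided is exactly the paper's proof, and it is legitimate here. Because $\nu$ is a Nakayama functor, $f_!$ has a left adjoint $f^*\circ\nu$ as well as a right adjoint $f^*$, hence is exact; and since $A\in\cX$, conditions \ref{Definition:10,2} and \ref{Definition:10,3} of Definition \ref{Definition:10} give $L_i\nu(A)=0$ for all $i>0$ --- here dimension shifting along $P$-projective resolutions \emph{is} valid, because the functors $L_i\nu$ do vanish on $P$-projectives, even though the analogous shift fails for $\Ext_{\cA}(-,Q)$. Consequently every projective resolution of $A$ stays exact under $f^*\circ\nu$, and by Lemma 6.1 in \cite{HJ16} the adjunction $f^*\circ\nu\dashv f_!$ derives to isomorphisms $\Ext^i_{\cA}(A,f_!(Q'))\cong\Ext^i_{\cB}(f^*\nu(A),Q')$ in all degrees. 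The right-hand side vanishes because $f^*\nu(A)\in\cF$: conditions \ref{Definition:1.5,2} and \ref{Definition:1.5,3} of Definition \ref{Definition:1.5} kill $\Ext^i_{\cB}(-,Q')$ for all $i>0$ by dimension shifting in $\cB$, where the middle terms are genuine projectives. In short: transport to $\cB$ first, and shift dimensions there.
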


\begin{proof}
We only need to show the statement for $Q=f_!(Q')$ where $Q'\in \cB$ is projective. Note first that any exact sequence $0\to f_!(Q')\to \cdots \to A\to 0$ stays exact under the functor $f^*\circ \nu$ since $L_i\nu(A)=0$ for all $i>0$ and as $f^*$ is exact. Since we have an adjunction $f^*\circ \nu\dashv f_!$ and the functor $f_!$ is exact it follows that $\Ext^i_{\cA}(A,f_!(Q'))\cong  \Ext^i_{\cB}(f^*\nu(A),Q')$ by Lemma 6.1 in \cite{HJ16}.  Since the latter is $0$ by the assumption on $A$, the claim follows.
\end{proof}

\begin{Lemma}\label{Lemma:14}
If $A\in (f^*\circ \nu)^{-1}(\cF)\cap \cX$, then there exists a projective object $Q\in \cA$ and an epimorphism $p\colon Q\to A$ such that $\Ker p\in (f^*\circ \nu)^{-1}(\cF)\cap \cX$.  
\end{Lemma}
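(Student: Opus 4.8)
The plan is to produce the required resolution step by lifting a projective cover of $f^*(A)$ along $f_!$, and then to check that the kernel lies in each of the two subcategories $\cX$ and $(f^*\circ \nu)^{-1}(\cF)$ separately. Since $\cB$ has enough projectives, I would first choose an epimorphism $q\colon Q'\to f^*(A)$ in $\cB$ with $Q'\in \Proj(\cB)$, and set $Q:=f_!(Q')$. Because $f_!$ is left adjoint to $f^*$ and $f^*$ is exact, $Q$ is projective in $\cA$. Let $p\colon f_!(Q')\to A$ be the morphism corresponding to $q$ under the adjunction $f_!\dashv f^*$, so that $\adjiso{f_!}{f^*}(p)=q$; explicitly $p=\counit{f_!}{f^*}_A\circ f_!(q)$.

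The first thing to verify is that $p$ is an epimorphism with $\Ker p\in \cX$. This is exactly the content of Lemma \ref{Lemma:10} part \ref{Lemma:10:2} applied to $s'=p$: we have $A\in \cX$ and, by construction, $\adjiso{f_!}{f^*}(p)=q$ is an epimorphism, so the lemma yields both that $p$ is epi and that $\Ker p\in \cX$. This disposes of the $\cX$-part with no further work.

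The remaining, more delicate, step is to show $f^*\nu(\Ker p)\in \cF$, that is, $\Ker p\in (f^*\circ \nu)^{-1}(\cF)$. I would start from the short exact sequence $0\to \Ker p\to f_!(Q')\xrightarrow{p} A\to 0$ and apply $\nu$. Since $A\in \cX$ we have $L_1\nu(A)=0$ by Definition \ref{Definition:10} part \ref{Definition:10,2}, so the long exact sequence of left derived functors of the right exact functor $\nu$ collapses to a short exact sequence $0\to \nu(\Ker p)\to \nu f_!(Q')\to \nu(A)\to 0$. Applying the exact functor $f^*$ then gives a short exact sequence
\[
0\to f^*\nu(\Ker p)\to f^*\nu f_!(Q')\to f^*\nu(A)\to 0
\]
in $\cB$. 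Here $f^*\nu f_!(Q')\in \cF$, because $f_!(Q')\in (f^*\circ \nu)^{-1}(\cF)\cap \cX$ by Lemma \ref{Lemma:13}, and $f^*\nu(A)\in \cF$ by the hypothesis $A\in (f^*\circ \nu)^{-1}(\cF)$.

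Finally, I would use that $\cF$, being an admissible subcategory of $\Gproj(\cB)$, is resolving by Lemma \ref{Lemma:1}, and in particular closed under kernels of epimorphisms. Applied to the displayed sequence this forces $f^*\nu(\Ker p)\in \cF$, whence $\Ker p\in (f^*\circ \nu)^{-1}(\cF)\cap \cX$, which completes the argument. The step I expect to be the crux is the passage through $\nu$: the whole argument hinges on $L_1\nu(A)=0$ to keep the sequence short exact after applying $\nu$, together with the identification of $f^*\nu f_!(Q')$ as an object of $\cF$ via Lemma \ref{Lemma:13}; everything else is a direct application of the $P$-admissible machinery already established.
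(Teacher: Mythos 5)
Your proof is correct and follows essentially the same route as the paper: choose an epimorphism $q\colon Q'\to f^*(A)$ with $Q'\in\Proj(\cB)$, apply Lemma \ref{Lemma:10} part \ref{Lemma:10:2} to $p=(\adjiso{f_!}{f^*})^{-1}(q)$ to handle the $\cX$-part, then use $L_1\nu(A)=0$ to get a short exact sequence under $f^*\circ\nu$ and conclude via the resolving property of $\cF$ from Lemma \ref{Lemma:1}. Your only addition is making explicit, via Lemma \ref{Lemma:13}, why $f^*\nu f_!(Q')\in\cF$, a point the paper leaves implicit in its appeal to $\cF$ being resolving.
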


\begin{proof}
Let $A\in (f^*\circ \nu)^{-1}(\cF)\cap \cX$ be arbitrary, and choose an epimorphism $q\colon Q'\to f^*(A)$ in $\cB$ with $Q'$ projective. By Lemma \ref{Lemma:10} part \ref{Lemma:10:2} the morphism $(\adjiso{f_!}{f^*})^{-1}(q)\colon f_!(Q')\to A$ is an epimorphism and $\Ker (\adjiso{f_!}{f^*})^{-1}(q)\in \cX$. Since $f_!(Q')$ is projective, it only remains to show $\Ker (\adjiso{f_!}{f^*})^{-1}(q) \in (f^*\circ \nu)^{-1}(\cF)$. To this end, note that applying $f^*\circ \nu$ to 
\[
0\to \Ker (\adjiso{f_!}{f^*})^{-1}(q)\to f_!(Q')\xrightarrow{(\adjiso{f_!}{f^*})^{-1}(q)}A\to 0
\]
 gives an exact sequence
\[
0\to f^*\nu(\Ker (\adjiso{f_!}{f^*})^{-1}(q))\to f^*\nu f_!(Q')\xrightarrow{f^*\nu((\adjiso{f_!}{f^*})^{-1}(q))}f^*\nu(A)\to 0
\]
in $\cB$ since $L_1\nu(A)=0$. By Lemma \ref{Lemma:1} we have that $\cF$ is resolving, and therefore $f^*\nu(\Ker (\adjiso{f_!}{f^*})^{-1}(q))\in \cF$. This proves the claim.
\end{proof}

\begin{Lemma}\label{Lemma:16}
If $A\in (f^*\circ \nu)^{-1}(\cF)\cap \cX$, then there exists a projective object $Q\in \cA$ and a monomorphism $j\colon A\to Q$ such that $\Coker j\in (f^*\circ \nu)^{-1}(\cF)\cap \cX$. 
\end{Lemma}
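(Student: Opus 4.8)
The plan is to dualise the proof of Lemma \ref{Lemma:14}, using Lemma \ref{Lemma:10} part \ref{Lemma:10:1} in place of part \ref{Lemma:10:2}. Since $A\in(f^*\circ\nu)^{-1}(\cF)\cap\cX$ we have $f^*\nu(A)\in\cF$, so property \ref{Definition:1.5,4} of the admissible subcategory $\cF$ provides a short exact sequence $0\to f^*\nu(A)\xrightarrow{u}Q'\to C_0\to 0$ in $\cB$ with $Q'$ projective and $C_0\in\cF$. I would then set $j:=\adjiso{f^*\circ\nu}{f_!}(u)\colon A\to f_!(Q')$ and $Q:=f_!(Q')$; the object $Q$ is projective in $\cA$ because $f_!$ preserves projectives, and $(\adjiso{f^*\circ\nu}{f_!})^{-1}(j)=u$ is a monomorphism, so Lemma \ref{Lemma:10} part \ref{Lemma:10:1} shows that $j$ is a monomorphism with $\Coker j\in\cX$. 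It remains to prove $f^*\nu(\Coker j)\in\cF$, which is the only nontrivial point.

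Applying the exact functor $f^*\circ\nu$ to $0\to A\xrightarrow{j}f_!(Q')\to\Coker j\to 0$ and using $L_1\nu(\Coker j)=0$ (valid since $\Coker j\in\cX$), I obtain a short exact sequence in $\cB$
\[
0\to f^*\nu(A)\xrightarrow{f^*\nu(j)}f^*\nu f_!(Q')\to f^*\nu(\Coker j)\to 0,
\]
whose middle term is projective by the computation in the proof of Lemma \ref{Lemma:13}. The key observation is that for the projective object $Q'$ the counit $\counit{f^*\circ\nu}{f_!}_{Q'}\colon f^*\nu f_!(Q')\to Q'$ is an epimorphism: by the triangle identity $f_!(\counit{f^*\circ\nu}{f_!}_{Q'})$ is a split epimorphism, and since $f_!$ is faithful and exact this forces $\counit{f^*\circ\nu}{f_!}_{Q'}$ itself to be epi. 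As both $f^*\nu f_!(Q')$ and $Q'$ are projective, this epimorphism splits and its kernel $K:=\Ker\counit{f^*\circ\nu}{f_!}_{Q'}$ is projective.

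Now I compare the displayed sequence with $0\to f^*\nu(A)\xrightarrow{u}Q'\to C_0\to 0$ via the vertical maps $1_{f^*\nu(A)}$ and $\counit{f^*\circ\nu}{f_!}_{Q'}$. The left square commutes because $\counit{f^*\circ\nu}{f_!}_{Q'}\circ f^*\nu(j)=u$, which follows from naturality of the counit, the triangle identity, and the formula $f^*\nu(j)=(f^*\nu f_!)(u)\circ f^*\nu(\unit{f^*\circ\nu}{f_!}_A)$. Applying the snake lemma to this morphism of short exact sequences, the left vertical map being an isomorphism and the middle one being epi with kernel $K$, yields a short exact sequence
\[
0\to K\to f^*\nu(\Coker j)\to C_0\to 0.
\]
Since $K$ is projective and $C_0\in\cF$, and $\cF$ is closed under extensions and contains the projective objects of $\cB$, it follows that $f^*\nu(\Coker j)\in\cF$, completing the proof.

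The hard part, and the reason this is not a verbatim dual of Lemma \ref{Lemma:14}, is precisely the passage $f^*\nu(\Coker j)\in\cF$. In Lemma \ref{Lemma:14} the relevant object is a \emph{kernel} and lands in $\cF$ immediately because $\cF$ is resolving; here it is a \emph{cokernel} and $\cF$ is not coresolving, so the naive dual fails (indeed $f^*\nu f_!$ need not preserve $\cF$). The splitting of the counit $\counit{f^*\circ\nu}{f_!}_{Q'}$ on projectives is exactly what repairs this: it lets me trade the unwanted term $f^*\nu f_!(C_0)$, which may leave $\cF$, for the chosen cosyzygy $C_0\in\cF$ together with a harmless projective summand. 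Establishing that this counit is epimorphic — equivalently, that $f_!$ is faithful — is therefore the crucial input.
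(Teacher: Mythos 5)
Your construction is the same as the paper's up to the decisive point: you choose the coresolution $0\to f^*\nu(A)\xrightarrow{u}Q'\to C_0\to 0$ in $\cF$, set $j=\adjiso{f^*\circ \nu}{f_!}(u)$, and invoke Lemma \ref{Lemma:10} part \ref{Lemma:10:1} to get $j$ monic with $\Coker j\in \cX$, exactly as in the paper. But the step you yourself single out as the crucial input --- that the counit $\counit{f^*\circ \nu}{f_!}_{Q'}$ is an epimorphism --- is a genuine gap. For any adjunction the counit is pointwise epi if and only if the \emph{right} adjoint is faithful (the paper cites exactly this, via \cite[Theorem IV.3.1]{MLan98}); here the right adjoint is $f_!$, and the standing hypotheses only make $f^*$ faithful. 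Your deduction ``$f_!(\counit{f^*\circ \nu}{f_!}_{Q'})$ is split epi, and $f_!$ is faithful and exact, hence the counit is epi'' assumes precisely the faithfulness of $f_!$ that is in question, and it can fail. Abstractly: take $\cA=\cB_1$, $\cB=\cB_1\times \cB_2$ with $\cB_2\neq 0$, $f^*(A)=(A,0)$ (faithful), $f_!(B_1,B_2)=B_1$, $\nu=\nu^-=\Id$; every axiom of Definition \ref{Nakayama functor for adjoint pair} holds, yet $f_!$ kills $\cB_2$ and the counit $f^*\nu f_!(B_1,B_2)=(B_1,0)\to (B_1,B_2)$ is not epi. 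This is not an artifact of the abstraction: for $k=k_1\times k_2$ and $\cC$ the one-object $k$-linear category with endomorphism ring $k_1$ (Hom-finite and locally bounded), $i_!$ annihilates every $k_2$-module. So your snake-lemma comparison, which needs the middle vertical map to be epi with projective kernel, breaks down in the paper's generality; it is valid only under extra hypotheses (e.g.\ $k$ a field, or whenever the units $k\to \cC(c,c)$ are pure monomorphisms, which do force $f_!$ faithful).

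The paper closes exactly this hole with a different device. It compares the same two short exact sequences by the same vertical maps $1_{f^*\nu(A)}$ and $\counit{f^*\circ \nu}{f_!}_{Q'}$, but instead of the snake lemma it uses that, the left vertical map being an isomorphism, the right-hand square is simultaneously a pushout and a pullback (the dual of \cite[Lemma 5.2]{Pop73}, the same trick as in Lemma \ref{Lemma:6}). The bicartesian square yields the exact sequence
\[
0\to f^*\nu f_!(Q')\to f^*\nu(\Coker j)\oplus Q'\to C_0\to 0,
\]
and since $f^*\nu f_!(Q')$ is projective (hence in $\cF$), $C_0\in \cF$, and $\cF$ is closed under extensions and direct summands, it follows that $f^*\nu(\Coker j)\in \cF$ --- with no epimorphy of the counit and no faithfulness of $f_!$ anywhere. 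In other words, the ``naive dual'' of Lemma \ref{Lemma:14} does go through, not because $\cF$ is coresolving, but because the bicartesian argument trades the problematic cokernel for a summand of an extension of $C_0$ by a projective; your diagnosis that something must repair the asymmetry was right, but the repair should be the pushout--pullback square, not the (generally false) splitting of the counit.
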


\begin{proof}
Let $A\in (f^*\circ \nu)^{-1}(\cF)\cap \cX$ be arbitrary. Choose a projective object $Q'\in \cB$ and an exact sequence
\[
0\to f^*\nu(A)\xrightarrow{i} Q' \xrightarrow{p} B\to 0
\]
with $B\in \cF$. By Lemma \ref{Lemma:10} we get that $j:=\adjiso{f^*\circ \nu}{f_!}(i)\colon A\to f_!(Q')$ is a monomorphism and $\Coker j\in \cX$. Since $f_!(Q')$ is projective, it only remains to show that $\Coker j\in (f^*\circ \nu)^{-1}(\cF)$. To this end, note that we have a commutative diagram 
\[
\begin{tikzpicture}[description/.style={fill=white,inner sep=2pt}]
\matrix(m) [matrix of math nodes,row sep=2.5em,column sep=4em,text height=1.5ex, text depth=0.25ex] 
{ f^*\nu(A) & f^*\nu f_!(Q') & f^*\nu(\Coker j) \\
  f^*\nu (A) & Q' & B  \\};
\path[->]
(m-1-1) edge node[auto] {$f^*\nu(j)$} 	    								(m-1-2)
(m-1-2) edge node[auto] {$$} 	    										(m-1-3)

(m-2-1) edge node[auto] {$i$} 	    													    (m-2-2)
(m-2-2) edge node[auto] {$$} 	    													    (m-2-3)

(m-1-1) edge node[auto] {$1_{f^*\nu (A)}$} 	    								   				(m-2-1)
(m-1-2) edge node[auto] {$\counit{f^*\circ \nu}{f_!}_{Q'}$} 	    									  				(m-2-2)
(m-1-3) edge node[auto] {$$} 	    								 					   (m-2-3);	
\end{tikzpicture}
\]
where the rows are short exact sequences. Hence, the right square is a pullback and a pushout square. Therefore, we get an exact sequence
\[
0\to f^*\nu f_! (Q')\to f^*\nu (\Coker j) \oplus Q'\to B\to 0.
\]
We know that $B\in \cF$, $f^*\nu f_!(Q')$ is projective, and $\cF$ is closed under extensions and direct summands. Therefore, it follows that $f^*\nu(\Coker j)\in \cF$. This proves the claim.
\end{proof}

\begin{Theorem}\label{Theorem:3}
The category $(f^*\circ \nu)^{-1}(\cF)\cap \cX$ is an admissible subcategory of $\Gproj(\cA)$. 
\end{Theorem}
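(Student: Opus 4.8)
The plan is to read off each defining property of an admissible subcategory (Definition \ref{Definition:1.5}) directly from the lemmas that immediately precede the statement, since the category in question is exactly $(f^*\circ \nu)^{-1}(\cF)\cap \cX$ with $\cX$ a $P$-admissible subcategory of $\relGproj{P}{\cA}$ and $\cF$ an admissible subcategory of $\Gproj(\cB)$. First I would note that closure under extensions and direct summands, which is part of the definition, is precisely the content of Lemma \ref{Lemma:12}. Next, condition \ref{Definition:1.5,1}, that the subcategory contain the projective objects of $\cA$, is Lemma \ref{Lemma:13}.

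For condition \ref{Definition:1.5,2}, the vanishing $\Ext^1_{\cA}(A,Q)=0$ for all $A$ in the subcategory and all projective $Q\in \cA$, I would invoke Lemma \ref{Lemma:15}, which in fact gives the stronger statement that $\Ext^i_{\cA}(A,Q)=0$ for all $i>0$; taking $i=1$ yields exactly what is required. Condition \ref{Definition:1.5,3}, the existence for each $A$ of a short exact sequence $0\to A'\to Q\to A\to 0$ with $Q$ projective and $A'$ again in the subcategory, is Lemma \ref{Lemma:14}. Dually, condition \ref{Definition:1.5,4}, the existence of a short exact sequence $0\to A\to Q\to A'\to 0$ with $Q$ projective and $A'$ in the subcategory, is Lemma \ref{Lemma:16}.

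Having matched each clause of Definition \ref{Definition:1.5} with an established lemma, the proof is complete: the subcategory $(f^*\circ \nu)^{-1}(\cF)\cap \cX$ satisfies every requirement and is therefore an admissible subcategory of $\Gproj(\cA)$. I do not expect any genuine obstacle at this stage, because all of the real work has already been distributed into the preparatory lemmas. In particular, the substantive arguments lie behind Lemmas \ref{Lemma:14} and \ref{Lemma:16}, where the pushout-pullback constructions of Lemmas \ref{Lemma:6} and \ref{Lemma:10} are used to control the kernels and cokernels arising from the units and counits of the relevant adjunctions, and behind Lemma \ref{Lemma:15}, which exploits the adjunction $f^*\circ \nu\dashv f_!$ together with the exactness of $f_!$ to transport the $\Ext$-vanishing from $\cF$ in $\cB$ to the subcategory in $\cA$. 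Thus the present statement is the synthesis step that packages these results into the single conclusion needed for the applications.
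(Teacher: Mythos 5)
Your proposal is correct and coincides with the paper's own proof, which likewise just cites Lemmas \ref{Lemma:12}, \ref{Lemma:13}, \ref{Lemma:15}, \ref{Lemma:14} and \ref{Lemma:16} to verify each clause of Definition \ref{Definition:1.5}. Your matching of each lemma to the corresponding condition (including that Lemma \ref{Lemma:15} gives $\Ext^i$-vanishing for all $i>0$, of which only $i=1$ is needed) is exactly the intended argument.
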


\begin{proof}
This follows from Lemma \ref{Lemma:12}, \ref{Lemma:13}, \ref{Lemma:15}, \ref{Lemma:14} and \ref{Lemma:16}. 
\end{proof}

\begin{Example}\label{Example:7}
Let $k$ be a field, let $\Lambda_1$ be a finite-dimensional algebra over $k$, and let $\Lambda_2$ be a left coherent $k$-algebra. Theorem \ref{Theorem:3} together with Example \ref{Example:5} show that the categories
\begin{enumerate}
	\item $\{M\in (\Lambda_1\otimes_k\Lambda_2)\text{-}\md \mid \text{ } _{\Lambda_1}M\in\Gproj(\Lambda_1\text{-}\Md ) \\ \text{ and }  _{\Lambda_2}(\Hom_k(\Lambda_1,k)\otimes_{\Lambda_1} M)\in \Gproj(\Lambda_2\text{-}\md )\}$
	\item $\{ M\in (\Lambda_1\otimes_k\Lambda_2)\text{-}\md \mid \text{ } _{\Lambda_1}M\in\Gproj(\Lambda_1\text{-}\Md ) \\ \text{ and }  _{\Lambda_2}(\Hom_k(\Lambda_1,k)\otimes_{\Lambda_1} M)\in \Proj(\Lambda_2\text{-}\md )\}$
	\item $\{ M\in (\Lambda_1\otimes_k\Lambda_2)\text{-}\md \mid \text{ } _{\Lambda_1}M\in\Proj(\Lambda_1\text{-}\Md ) \\ \text{ and }  _{\Lambda_2}(\Hom_k(\Lambda_1,k)\otimes_{\Lambda_1} M)\in \Gproj(\Lambda_2\text{-}\md )\}$
	\item $\{ M\in (\Lambda_1\otimes_k\Lambda_2)\text{-}\md \mid \text{ } _{\Lambda_1}M\in\Proj(\Lambda_1\text{-}\Md ) \\ \text{ and }   _{\Lambda_2}(\Hom_k(\Lambda_1,k)\otimes_{\Lambda_1} M)\in \Proj(\Lambda_2\text{-}\md )\}$
\end{enumerate}
are admissible subcategories of $\Gproj((\Lambda_1\otimes_k \Lambda_2)\text{-}\md)$.
\end{Example}

\begin{Example}\label{Example:8}
Let $k$ be a field, let $\Lambda_1$ be a finite-dimensional algebra over $k$, and let $\Lambda_2$ be a $k$-algebra. Example \ref{Example:6} together with Theorem \ref{Theorem:3} show that the categories 
\begin{enumerate}
	\item $\{M\in (\Lambda_1\otimes_k\Lambda_2)\text{-}\Md \mid \text{ } _{\Lambda_1}M\in\Gproj(\Lambda_1\text{-}\Md ) \\ \text{ and }  _{\Lambda_2}(\Hom_k(\Lambda_1,k)\otimes_{\Lambda_1} M)\in \Gproj(\Lambda_2\text{-}\Md )\}$
	\item $\{ M\in (\Lambda_1\otimes_k\Lambda_2)\text{-}\Md \mid\text{ } _{\Lambda_1}M\in\Gproj(\Lambda_1\text{-}\Md ) \\ \text{ and }  _{\Lambda_2}(\Hom_k(\Lambda_1,k)\otimes_{\Lambda_1} M)\in \Proj(\Lambda_2\text{-}\Md )\}$
	\item $\{ M\in (\Lambda_1\otimes_k\Lambda_2)\text{-}\Md \mid\text{ } _{\Lambda_1}M\in\Proj(\Lambda_1\text{-}\Md ) \\ \text{ and }  _{\Lambda_2}(\Hom_k(\Lambda_1,k)\otimes_{\Lambda_1} M)\in \Gproj(\Lambda_2\text{-}\Md )\}$
	\item $\{ M\in (\Lambda_1\otimes_k\Lambda_2)\text{-}\Md \mid\text{ } _{\Lambda_1}M\in\Proj(\Lambda_1\text{-}\Md ) \\ \text{ and }   _{\Lambda_2}(\Hom_k(\Lambda_1,k)\otimes_{\Lambda_1} M)\in \Proj(\Lambda_2\text{-}\Md )\}$
\end{enumerate}
are admissible subcategories of $\Gproj((\Lambda_1\otimes_k\Lambda_2)\text{-}\Md)$.
\end{Example}

\subsection{Lifting Gorenstein projectives}\label{Lifting Gorenstein projectives}

Now assume $\cX=\relGproj{P}{\cA}$ and $\cF=\Gproj(\cB)$. We define
\[
\Gproj(\relGproj{P}{\cA}):=(f^*\circ \nu)^{-1}(\Gproj(\cB))\cap \relGproj{P}{\cA}.
\]
 By Theorem \ref{Theorem:3} we know that $\Gproj(\relGproj{P}{\cA})$ is an admissible subcategory of $\Gproj(\cA)$, and therefore
\[
\Gproj(\relGproj{P}{\cA})\subset \Gproj(\cA).
\]
We want to investigate when this inclusion is an equality. We first give a different description of the objects in $\Gproj(\relGproj{P}{\cA})$.

\begin{Proposition}\label{Proposition:3}
Let $A\in \cA$ be arbitrary. Then $A\in \Gproj(\relGproj{P}{\cA})$ if and only if there exists a totally acyclic complex 
\[
Q_{\bullet} = \cdots \xrightarrow{s_{2}} Q_{1}\xrightarrow{s_{1}} Q_0\xrightarrow{s_0} Q_{-1}\xrightarrow{s_{-1}} \cdots
\]
in $\cA$, such that $Z_i(Q_{\bullet})\in \relGproj{P}{\cA}$ for all $i\in \bZ$, and such that $Z_0(Q_{\bullet})=A$.
\end{Proposition}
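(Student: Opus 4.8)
The plan is to prove the two implications separately, relying on the fact (Theorem \ref{Theorem:3}) that $\Gproj(\relGproj{P}{\cA})$ is an admissible subcategory of $\Gproj(\cA)$, so that it satisfies all the axioms of Definition \ref{Definition:1.5}.

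For the implication from right to left, suppose a totally acyclic complex $Q_\bullet$ with all cycles in $\relGproj{P}{\cA}$ and $Z_0(Q_\bullet)=A$ is given. Then $A\in \relGproj{P}{\cA}$ by hypothesis, so it suffices to show $f^*\nu(A)\in \Gproj(\cB)$. First I would note that each $Q_i$ is a summand of some $f_!(Q')$ with $Q'\in \Proj(\cB)$, whence $f^*\nu(Q_i)$ is a summand of $f^*\nu f_!(Q')$ and therefore projective in $\cB$, exactly as in the proof of Lemma \ref{Lemma:13}. Next, splitting $Q_\bullet$ into short exact sequences $0\to Z_{i+1}(Q_\bullet)\to Q_{i+1}\to Z_i(Q_\bullet)\to 0$, the vanishing $L_1\nu(Z_i(Q_\bullet))=0$ (Theorem \ref{Theorem:2.5}) shows that the right exact functor $\nu$, followed by the exact functor $f^*$, keeps each sequence short exact; re-splicing shows $f^*\nu(Q_\bullet)$ is acyclic with cycles $f^*\nu(Z_i(Q_\bullet))$, and in particular $Z_0(f^*\nu(Q_\bullet))=f^*\nu(A)$. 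Total acyclicity then follows from the adjunction isomorphism $\cB(f^*\nu(Q_\bullet),Q)\cong \cA(Q_\bullet,f_!(Q))$ coming from $f^*\nu\dashv f_!$: for any $Q\in \Proj(\cB)$ the object $f_!(Q)$ is projective in $\cA$, so the right-hand complex is acyclic because $Q_\bullet$ is totally acyclic. Hence $f^*\nu(A)\in \Gproj(\cB)$ and $A\in \Gproj(\relGproj{P}{\cA})$.

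For the converse, write $\cF'=\Gproj(\relGproj{P}{\cA})$ and assume $A\in \cF'$. The idea is to build a complete resolution of $A$ whose cycles all lie in $\cF'$. Applying Definition \ref{Definition:1.5}(iii) repeatedly, starting from $A$, I would construct a projective resolution $\cdots\to Q_2\to Q_1\to A\to 0$ with all syzygies in $\cF'$; applying Definition \ref{Definition:1.5}(iv) repeatedly I would construct a coresolution $0\to A\to Q_0\to Q_{-1}\to \cdots$ with all cosyzygies in $\cF'$. Splicing these at $A$ produces an acyclic complex $Q_\bullet$ of projectives with $Z_0(Q_\bullet)=A$ and all cycles in $\cF'\subset \relGproj{P}{\cA}$, so the cycle condition of the statement holds. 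It remains to verify total acyclicity, i.e. that $\cA(Q_\bullet,Q)$ is acyclic for every $Q\in \Proj(\cA)$; by the standard reduction to the cycle short exact sequences this is equivalent to $\Ext^1_{\cA}(Z_i(Q_\bullet),Q)=0$ for all $i$, which is precisely Definition \ref{Definition:1.5}(ii) applied to the admissible subcategory $\cF'$.

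Neither direction hides a deep obstacle, since Theorem \ref{Theorem:3} already supplies the admissibility that drives both constructions. The step requiring the most care is the right-to-left direction, where one must simultaneously check that $f^*\nu$ carries $Q_\bullet$ into a complex of projectives and that it preserves exactness; the latter is exactly where the hypothesis that every cycle, and not merely $Z_0$, lies in $\relGproj{P}{\cA}$ is indispensable, through the vanishing $L_1\nu(Z_i(Q_\bullet))=0$.
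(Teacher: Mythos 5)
Your proof is correct and follows essentially the same route as the paper: the forward direction invokes the admissibility of $\Gproj(\relGproj{P}{\cA})$ from Theorem \ref{Theorem:3} to splice resolutions with cycles in the subcategory and uses the $\Ext^1$-vanishing of Definition \ref{Definition:1.5} for total acyclicity, while the converse uses $L_1\nu$-vanishing on the cycles together with the adjunction isomorphism $\cB(f^*\nu(Q_\bullet),Q)\cong \cA(Q_\bullet,f_!(Q))$, exactly as in the paper. The only difference is that you make explicit some routine steps the paper leaves implicit, such as the re-splicing of short exact sequences and the reduction of total acyclicity to $\Ext^1(Z_i(Q_\bullet),Q)=0$.
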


\begin{proof}
Assume $A\in \Gproj(\relGproj{P}{\cA})$. Since $\Gproj(\relGproj{P}{\cA})$ is an admissible subcategory of $\Gproj(\cA)$, we can find a long exact sequence
\[
Q_{\bullet} = \cdots \to Q_{1}\to Q_0\to Q_{-1}\to \cdots
\]
with $Q_i\in \cA$ projective, $Z_0(Q_{\bullet})=A$, and $Z_i(Q_{\bullet})\in \Gproj(\relGproj{P}{\cA})$ for all $i\in \bZ$. Furthermore, $\Ext^1_{\cA}(A',Q')=0$ for all $A'\in \Gproj(\relGproj{P}{\cA})$ and $Q'\in \Proj(\cA)$ since $\Gproj(\relGproj{P}{\cA})$ is admissible. This shows that $Q_{\bullet}$ is totally acyclic. 

For the converse, assume $Q_{\bullet}$ is totally acyclic, $Z_i(Q_{\bullet})\in \relGproj{P}{\cA}$ for all $i\in \bZ$, and $A=Z_0(Q_{\bullet})$. The sequence 
\[
f^*\nu(Q_{\bullet})=\cdots \xrightarrow{f^*\nu(s_{2})}f^*\nu(Q_{1})\xrightarrow{f^*\nu(s_{1})} f^*\nu(Q_0)\xrightarrow{f^*\nu (s_0)} \cdots
\]
is then exact since $L_1\nu(A')=0$ for all $A'\in \relGproj{P}{\cA}$. Furthermore, the objects $f^*\nu(Q_i)\in \cB$ are projective since $f^*\circ \nu$ preserves projectives. Applying $\cB(-,Q)$ for $Q\in \cB$ projective and using the isomorphism $\cB (f^*\nu(Q_i),Q)\cong \cA(Q_i,f_!(Q))$ gives us the sequence
\[
\cdots \xrightarrow{-\circ s_{-1}}\cA(Q_{-1},f_!(Q))\xrightarrow{-\circ s_0} \cA(Q_{0},f_!(Q))\xrightarrow{-\circ s_{1}} \cA(Q_{1},f_! (Q))\xrightarrow{-\circ s_{2}} \cdots
\]
which is exact since $Q_{\bullet}$ is totally acyclic. Hence, $f^*\nu(Q_{\bullet})$ is totally acyclic, and therefore $f^*\nu (A) = Z_0(f^*\nu(Q_{\bullet}))\in \Gproj(\cB )$. This shows that $A\in \Gproj(\relGproj{P}{\cA})$, and we are done.
\end{proof}

\begin{Remark}\label{Remark:1}
Proposition \ref{Proposition:3} shows that $A\in \Gproj(\relGproj{P}{\cA})$ if and only if $A$ is Gorenstein projective inside the exact category $\relGproj{P}{\cA}$. This is the reason for the notation $\Gproj(\relGproj{P}{\cA})$.
\end{Remark}

\begin{Proposition}\label{Proposition:4}
The following statements are equivalent:
\begin{enumerate}
	\item\label{Proposition:4,1} $\Gproj(\relGproj{P}{\cA}) = \Gproj(\cA)$;  
	\item\label{Proposition:4,2} $\Gproj(\cA)\subset \relGproj{P}{\cA}$;  
	\item\label{Proposition:4,3} $f^*\circ \nu\colon \cA\to \cB$ preserves Gorenstein projectives.
\end{enumerate}
\end{Proposition}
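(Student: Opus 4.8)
The plan is to prove the cycle $\ref{Proposition:4,1}\Rightarrow\ref{Proposition:4,2}\Rightarrow\ref{Proposition:4,3}\Rightarrow\ref{Proposition:4,1}$, using throughout that $\Gproj(\relGproj{P}{\cA})\subset \Gproj(\cA)$ always holds (by Theorem~\ref{Theorem:3} and Proposition~\ref{Proposition:2}), and that by construction $\Gproj(\relGproj{P}{\cA})=(f^*\circ \nu)^{-1}(\Gproj(\cB))\cap \relGproj{P}{\cA}$ is contained in $\relGproj{P}{\cA}$. The implication $\ref{Proposition:4,1}\Rightarrow\ref{Proposition:4,2}$ is then immediate: if equality holds in \ref{Proposition:4,1}, then $\Gproj(\cA)=\Gproj(\relGproj{P}{\cA})\subset \relGproj{P}{\cA}$.

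For $\ref{Proposition:4,2}\Rightarrow\ref{Proposition:4,3}$ I would invoke the intrinsic description of $\Gproj(\relGproj{P}{\cA})$ given in Proposition~\ref{Proposition:3}. Let $A\in \Gproj(\cA)$ and choose a totally acyclic complex $Q_{\bullet}$ of projectives with $Z_0(Q_{\bullet})=A$. Since total acyclicity is stable under shifts, each cycle $Z_i(Q_{\bullet})$ is again Gorenstein projective, hence lies in $\relGproj{P}{\cA}$ by hypothesis \ref{Proposition:4,2}. Proposition~\ref{Proposition:3} then gives $A\in \Gproj(\relGproj{P}{\cA})$, so in particular $f^*\nu(A)\in \Gproj(\cB)$; as $A$ is arbitrary this is exactly \ref{Proposition:4,3}.

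The substance of the proof is $\ref{Proposition:4,3}\Rightarrow\ref{Proposition:4,1}$, and the main obstacle is the following key vanishing: \emph{if $f^*\circ\nu$ preserves Gorenstein projectives, then $L_1\nu(B)=0$ for every $B\in \Gproj(\cA)$.} To prove it I would take a short exact sequence $0\to B'\to Q\to B\to 0$ with $Q$ projective and $B'$ a syzygy of $B$, which is again Gorenstein projective because $\Gproj(\cA)$ is resolving. Since $Q$ is projective, hence $P$-projective, and $f_!$ is adapted to $\nu$, we have $L_i\nu(Q)=0$ for $i>0$, so right exactness of $\nu$ identifies $L_1\nu(B)$ with $\Ker(\nu(B')\to \nu(Q))$. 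Applying the faithful exact functor $f^*$, it suffices to show that $f^*\nu(B')\to f^*\nu(Q)$ is a monomorphism. Now $f^*\nu(B')\in \Gproj(\cB)$ by hypothesis \ref{Proposition:4,3} — this is the only place where \ref{Proposition:4,3} enters — so Lemma~\ref{Lemma:2} reduces the claim to surjectivity of $\cB(f^*\nu(Q),Q')\to \cB(f^*\nu(B'),Q')$ for every projective $Q'\in \cB$. Transporting this along the adjunction isomorphism $\adjiso{f^*\circ \nu}{f_!}\colon \cB(f^*\nu(-),Q')\cong \cA(-,f_!(Q'))$ turns it into the surjectivity of $\cA(Q,f_!(Q'))\to \cA(B',f_!(Q'))$, which is precisely the statement $\Ext^1_{\cA}(B,f_!(Q'))=0$. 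The latter holds since $B\in \Gproj(\cA)$ and $f_!(Q')$ is projective, by the Ext-vanishing property of the admissible subcategory $\Gproj(\cA)$ (Definition~\ref{Definition:1.5} and Proposition~\ref{Proposition:2}). This establishes the key vanishing.

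Granting it, I finish $\ref{Proposition:4,3}\Rightarrow\ref{Proposition:4,1}$ as follows. For $A\in \Gproj(\cA)$ with totally acyclic complex $Q_{\bullet}$ of projectives as above, the $Q_i$ are $P$-projective and each cycle $Z_i(Q_{\bullet})$ is Gorenstein projective. Applying $\nu$ to each short exact sequence $0\to Z_i(Q_{\bullet})\to Q_i\to Z_{i-1}(Q_{\bullet})\to 0$ and using $L_1\nu(Z_{i-1}(Q_{\bullet}))=0$ keeps it short exact; splicing these shows $\nu(Q_{\bullet})$ is exact. By Definition~\ref{Definition:7} this means $A=Z_0(Q_{\bullet})\in \relGproj{P}{\cA}$, so $\Gproj(\cA)\subset \relGproj{P}{\cA}$. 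Combined with hypothesis \ref{Proposition:4,3}, namely $\Gproj(\cA)\subset (f^*\circ\nu)^{-1}(\Gproj(\cB))$, we obtain $\Gproj(\cA)\subset \Gproj(\relGproj{P}{\cA})$, and equality \ref{Proposition:4,1} follows. The only delicate points to verify carefully are the identification of $L_1\nu(B)$ with the kernel above and the naturality of $\adjiso{f^*\circ \nu}{f_!}$ needed to transport the surjectivity; everything else is bookkeeping with the exact and admissible structures already set up.
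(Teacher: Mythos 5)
Your proposal is correct and follows essentially the same route as the paper: the trivial inclusion handles $\ref{Proposition:4,1}\Rightarrow\ref{Proposition:4,2}$, Proposition~\ref{Proposition:3} applied to a totally acyclic complex with Gorenstein projective cycles handles the step out of \ref{Proposition:4,2}, and the heart of the argument --- showing $L_1\nu$ vanishes on $\Gproj(\cA)$ via the four-term sequence $0\to L_1\nu(A)\to\nu(A')\to\nu(Q)\to\nu(A)\to 0$, the adjunction $\adjiso{f^*\circ\nu}{f_!}$, the vanishing $\Ext^1_{\cA}(A,f_!(Q'))=0$, and Lemma~\ref{Lemma:2} --- is exactly the paper's proof of $\ref{Proposition:4,3}\Rightarrow\ref{Proposition:4,1}$. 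The only cosmetic difference is that you organize the implications as a single cycle rather than the paper's separate implications, which changes nothing of substance.
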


\begin{proof}
Obviously, \ref{Proposition:4,1} $\implies$ \ref{Proposition:4,2} and \ref{Proposition:4,1}$\implies$ \ref{Proposition:4,3}. Also, if \ref{Proposition:4,2} holds then any totally acyclic complex satisfies the assumptions in Proposition \ref{Proposition:3}, and therefore \ref{Proposition:4,1} holds. We show the implication \ref{Proposition:4,3}$\implies$ \ref{Proposition:4,1}. Assume $f^*\circ \nu$ preserves Gorenstein projectives, and let $A\in \Gproj(\cA)$ be arbitrary. We only need to show that $L_1\nu(A)=0$ since this implies that if $Q_{\bullet}$ is totally acyclic, then  $\nu(Q_{\bullet})$ is exact, and hence $Z_0(Q_{\bullet})\in \relGproj{P}{\cA}$ by definition since projective objects are $P$-projective. Let 
\[
0\to A'\xrightarrow{s} Q\xrightarrow{t} A\to 0
\]
be an exact sequence in $\cA$ with $Q$ projective and $A'\in \Gproj(\cA)$. Applying $\nu$ gives an exact sequence
\[
0\to L_1\nu(A)\to \nu(A')\xrightarrow{\nu(s)} \nu (Q)\xrightarrow{\nu(t)} \nu (A)\to 0
\]
Hence, $L_1\nu(A)=0$ if and only if $\nu(s)$ is a monomorphism. Let $Q'\in \cB$ be a projective object. We know that the map $\cA(Q,f_!(Q'))\xrightarrow{-\circ s} \cA(A', f_!(Q'))$ is an epimorphism since $\Ext^1_{\cA}(A,f_!(Q'))=0$. Hence, from the adjunction $f^*\circ \nu\dashv f_!$ we get that
\[
\cB(f^*\nu(Q), Q')\xrightarrow{-\circ f^*\nu (s)} \cB(f^*\nu(A'), Q')
\]
is an epimorphism. It follows therefore from Lemma \ref{Lemma:2} that $f^*\nu(s)$ is a monomorphism. Since $f^*$ is faithful, we get that $\nu(s)$ is a monomorphism. This proves the claim.
\end{proof}

The following result gives sufficient criteria for when $\Gproj(\relGproj{P}{\cA})= \Gproj(\cA)$.

\begin{Theorem}\label{Theorem:4}
We have that $\Gproj(\relGproj{P}{\cA})= \Gproj(\cA)$ if either of the following conditions hold:
\begin{enumerate}
	\item\label{Theorem:4,1} For any long exact sequence
	\[
	0\to K\to Q_0\to Q_{-1}\to \cdots
	\]
	with $Q_i\in \cA$ projective for $i\leq 0$, we have $K\in \relGproj{P}{\cA}$;
	\item\label{Theorem:4,2} If $B\in \cB$ satisfy $\Ext^1_{\cB}(B,B')=0$ for all $B'$ of $\pdim B'< \infty$, then $B\in \Gproj(\cB)$.
\end{enumerate}
\end{Theorem}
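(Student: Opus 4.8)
The plan is to reduce both cases to Proposition \ref{Proposition:4}, which tells us that the equality $\Gproj(\relGproj{P}{\cA})=\Gproj(\cA)$ is equivalent to the inclusion $\Gproj(\cA)\subset\relGproj{P}{\cA}$ and also to $f^*\circ\nu$ preserving Gorenstein projectives. I would verify the inclusion under hypothesis \ref{Theorem:4,1}, and verify the preservation property under hypothesis \ref{Theorem:4,2}.

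For condition \ref{Theorem:4,1}, let $A\in\Gproj(\cA)$ and choose a totally acyclic complex $Q_{\bullet}=\cdots\to Q_1\xrightarrow{s_1}Q_0\xrightarrow{s_0}Q_{-1}\to\cdots$ with $A=Z_0(Q_{\bullet})=\Ker s_0$. Since $Q_{\bullet}$ is acyclic, its right half $0\to A\to Q_0\xrightarrow{s_0}Q_{-1}\xrightarrow{s_{-1}}\cdots$ is an exact sequence in which every $Q_i$ with $i\leq 0$ is projective. This is precisely the shape occurring in \ref{Theorem:4,1}, so $A$, playing the role of $K$, lies in $\relGproj{P}{\cA}$. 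Hence $\Gproj(\cA)\subset\relGproj{P}{\cA}$, and Proposition \ref{Proposition:4} settles this case.

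For condition \ref{Theorem:4,2}, I would show $f^*\nu(A)\in\Gproj(\cB)$ for every $A\in\Gproj(\cA)$; by \ref{Theorem:4,2} it suffices to check $\Ext^1_{\cB}(f^*\nu(A),B')=0$ for all $B'$ with $\pdim B'<\infty$. The central observation is a natural monomorphism
\[
\Ext^1_{\cB}(f^*\nu(A),B')\hookrightarrow \Ext^1_{\cA}(A,f_!(B')),
\]
valid for arbitrary $A$ and $B'$. To build it, pick a projective presentation $0\to K\to P_0\to A\to 0$ in $\cA$; applying the right exact functor $f^*\circ\nu$ and using that projectives are $\nu$-acyclic yields a four term exact sequence
\[
0\to f^*L_1\nu(A)\to f^*\nu(K)\to f^*\nu(P_0)\to f^*\nu(A)\to 0
\]
in $\cB$ with $f^*\nu(P_0)$ projective. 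Splitting this into two short exact sequences, applying $\Hom_{\cB}(-,B')$, and invoking the adjunction isomorphism $\Hom_{\cA}(-,f_!(B'))\cong\Hom_{\cB}(f^*\nu(-),B')$ together with the exactness of $f_!$ realizes $\Ext^1_{\cB}(f^*\nu(A),B')$ and $\Ext^1_{\cA}(A,f_!(B'))$ as the cokernels of two maps out of $\Hom_{\cB}(f^*\nu(P_0),B')$, the second being the first post-composed with a monomorphism; a short diagram chase then produces the displayed injection on cokernels.

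To finish, note that $f_!(B')$ has finite projective dimension in $\cA$, since a finite projective resolution of $B'$ maps under the exact, projective-preserving functor $f_!$ to a finite projective resolution of $f_!(B')$. As $A$ is Gorenstein projective, $\Ext^1_{\cA}(A,W)=0$ for every $W$ of finite projective dimension, by dimension shifting from the defining vanishing $\Ext^i_{\cA}(A,Q)=0$ for all projective $Q$ and $i>0$; hence the right hand side above vanishes and so does $\Ext^1_{\cB}(f^*\nu(A),B')$. Condition \ref{Theorem:4,2} then gives $f^*\nu(A)\in\Gproj(\cB)$, so $f^*\circ\nu$ preserves Gorenstein projectives, and Proposition \ref{Proposition:4} completes the argument. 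The main obstacle is constructing the injection: one cannot assume $L_1\nu(A)=0$ a priori, and it is precisely the left exactness of $\Hom_{\cB}(-,B')$ applied to the epimorphism $f^*\nu(K)\twoheadrightarrow\im(f^*\nu(K)\to f^*\nu(P_0))$ that confines the contribution of $f^*L_1\nu(A)$ to degrees $\geq 2$.
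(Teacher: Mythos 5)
Your proposal is correct and takes essentially the same route as the paper: condition \ref{Theorem:4,1} is handled by feeding the exact right half $0\to A\to Q_0\to Q_{-1}\to\cdots$ of a totally acyclic complex into part \ref{Proposition:4,2} of Proposition \ref{Proposition:4}, and condition \ref{Theorem:4,2} by using part \ref{Proposition:4,3}, a projective presentation of $A$, the adjunction $f^*\circ \nu\dashv f_!$, and the vanishing $\Ext^1_{\cA}(A,f_!(B'))=0$ for $\pdim B'<\infty$ (since $f_!$ is exact and preserves projectives). Your natural monomorphism $\Ext^1_{\cB}(f^*\nu(A),B')\hookrightarrow \Ext^1_{\cA}(A,f_!(B'))$ is the same diagram chase as in the paper, which instead concludes directly that restriction along the kernel inclusion $K\to f^*\nu(Q)$ is an epimorphism because it factors an epimorphism through the monomorphism induced by the surjection $f^*\nu(A')\twoheadrightarrow K$.
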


\begin{proof}
Proposition \ref{Proposition:4} part \ref{Proposition:4,2} shows that condition \ref{Theorem:4,1} is sufficient. Assume condition \ref{Theorem:4,2} holds. By Proposition \ref{Proposition:4} part \ref{Proposition:4,3} it is sufficient to show that $f^*\nu(A)\in \Gproj(\cB)$ for all $A\in \Gproj(\cA)$. Fix $A\in \Gproj(\cA)$, and let $0\to A'\xrightarrow{s} Q\xrightarrow{t} A\to 0$ be an exact sequence in $\cA$ with $Q\in \Proj(\cA)$. Applying $f^*\circ \nu$ gives an exact sequence $ f^*\nu(A')\xrightarrow{f^*\nu(s)} f^*\nu(Q)\xrightarrow{f^*\nu(t)} f^*\nu(A)\to 0$ in $\cB$. Let $i\colon K \to f^*\nu(Q)$ be the inclusion of the kernel of $f^*\nu(t)$, let $p\colon f^*\nu(A')\to K$ be the surjection induced from $f^*\nu(s)$, and let $B'\in \cB$ be an arbitrary object. Applying $\cB(-,B)$ gives an exact sequences 
\begin{multline*}
0\to \cB(f^*\nu(A),B')\xrightarrow{-\circ f^*\nu(t)} \cB(f^*\nu(Q),B')\xrightarrow{-\circ i}  \cB(K,B') \\
\to \Ext^1_{\cB}(f^*\nu(A),B')\to 0.
\end{multline*}
where $\Ext^1_{\cB}(f^*\nu(Q),B')=0$ since $f^*\nu$ preserves projective objects. Hence, we only need to show that $-\circ i\colon \cB(f^*\nu(Q),B')\to \cB(K,B')$ is an epimorphism if $\pdim B'<\infty$. To this end, note that $\Ext^1_{\cA}(A,f_!(B))=0$ if $\pdim B'<\infty$ since $A\in \Gproj(\cA)$ and $f_!$ preserves objects of finite projective dimension. Therefore, we have an exact sequence
\[
0\to \cA(A,f_!(B'))\xrightarrow{-\circ t} \cA(Q,f_!(B'))\xrightarrow{-\circ s}  \cA(A',f_!(B'))\to 0. 
\]
 Via the adjunction $\cA(-,f_!)\cong \cB(f^*\circ \nu,-)$ the map 
 \[
 \cA(Q,f_!(B'))\xrightarrow{-\circ s}  \cA(A',f_!(B'))
 \]
  corresponds to 
  \[
  \cB(f^*\nu(Q),B')\xrightarrow{-\circ f^*\nu(s)}  \cB(f^*\nu(A'),B')
  \]
  which is therefore also an epimorphism. But $-\circ f^*\nu(s)$ factors as
 \[
 \cB(f^*\nu(Q),B')\xrightarrow{-\circ i} \cB(K,B') \xrightarrow{-\circ p}  \cB(f^*\nu(A'),B').
 \]
Since $\cB(K,B') \xrightarrow{-\circ p}  \cB(f^*\nu(A'),B')$ is a monomorphism, it follows that
\[
\cB(f^*\nu(Q),B')\xrightarrow{-\circ i} \cB(K,B')
\]
is an epimorphism. This proves the claim.
\end{proof}

\begin{Corollary}\label{Gorenstein adjoint pairs lifts Gorenstein projectives}
If $P$ is Iwanaga-Gorenstein, then 
\[
\Gproj(\relGproj{P}{\cA})= \Gproj(\cA).
\]
\end{Corollary}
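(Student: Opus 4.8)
The plan is to verify condition \ref{Theorem:4,1} of Theorem \ref{Theorem:4} and let that theorem supply the conclusion. So I fix a long exact sequence
\[
0\to K\to Q_0\xrightarrow{d_0} Q_{-1}\xrightarrow{d_{-1}} Q_{-2}\to \cdots
\]
with every $Q_{-j}$ projective in $\cA$, and the whole task reduces to showing $K\in \relGproj{P}{\cA}$. Here the Iwanaga-Gorenstein hypothesis pays off immediately: by Theorem \ref{Theorem:2.5} part \ref{Theorem:2.5:2} we have $\relGproj{P}{\cA}=\{A\in\cA\mid L_i\nu(A)=0 \text{ for all } i>0\}$, so I only need to prove that $L_i\nu(K)=0$ for all $i>0$.

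To compute these derived functors I would first split the coresolution into short exact sequences. Setting $K_0:=K$ and $K_{-j}:=\Ker d_{-j}=\im d_{-j+1}$ for $j\geq 1$, exactness yields short exact sequences
\[
0\to K_{-j}\to Q_{-j}\to K_{-j-1}\to 0 \qquad (j\geq 0).
\]
Since each $Q_{-j}$ is projective, $L_m\nu(Q_{-j})=0$ for all $m>0$, and hence the long exact sequence of left derived functors of the right exact functor $\nu$ attached to the $j$th short exact sequence degenerates, for $i\geq 1$, into connecting isomorphisms $L_i\nu(K_{-j})\cong L_{i+1}\nu(K_{-j-1})$. Iterating these gives, for every $i\geq 1$ and every $m\geq 0$,
\[
L_i\nu(K)=L_i\nu(K_0)\cong L_{i+m}\nu(K_{-m}).
\]

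Now the Iwanaga-Gorenstein assumption enters a second time: by Definition \ref{Definition:9} there is an integer $n\geq 0$ with $L_p\nu=0$ for all $p>n$. Given any $i\geq 1$, taking $m=n$ forces $i+m=i+n>n$, so $L_i\nu(K)\cong L_{i+n}\nu(K_{-n})=0$. Therefore $L_i\nu(K)=0$ for all $i>0$, which by Theorem \ref{Theorem:2.5} gives $K\in \relGproj{P}{\cA}$; condition \ref{Theorem:4,1} of Theorem \ref{Theorem:4} thus holds, and the equality $\Gproj(\relGproj{P}{\cA})=\Gproj(\cA)$ follows.

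I do not anticipate a genuine obstacle here: the argument is a straightforward dimension shift, and its only essential feature is that the given coresolution extends infinitely to the right, so the degree of the derived functor can always be pushed above $n$ while the projective terms $Q_{-j}$ continue to annihilate the intermediate terms of the long exact sequences. The only points demanding a little care are the bookkeeping of indices in the iterated connecting isomorphisms and the observation that $\nu$ is right exact (guaranteed by $\nu\dashv\nu^-$), so that the relevant derived functors are indeed the left derived functors $L_i\nu$ used throughout.
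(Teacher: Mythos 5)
Your proof is correct, but it verifies condition \ref{Theorem:4,1} of Theorem \ref{Theorem:4} by a different mechanism than the paper. The paper's proof is one line: it invokes Theorem \ref{Theorem:2}, which says that when $P$ is Iwanaga-Gorenstein the global resolution dimension $\dim_{\relGproj{P}{\cA}}(\cA)$ is finite, say equal to $n$; since $\relGproj{P}{\cA}$ is resolving (Proposition \ref{Closure of Gorenstein P-projective}), one truncates the given coresolution to $0\to K\to Q_0\to\cdots\to Q_{-n+1}\to C\to 0$ and applies the resolution-dimension comparison result quoted from \cite[Proposition 2.3]{Sto14} to conclude $K\in \relGproj{P}{\cA}$. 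You instead bypass Theorem \ref{Theorem:2} entirely: you use only Definition \ref{Definition:9} (the vanishing $L_p\nu=0$ for $p>n$) together with the characterization $\relGproj{P}{\cA}=\{A\mid L_i\nu(A)=0 \text{ for } i>0\}$ from Theorem \ref{Theorem:2.5} part \ref{Theorem:2.5:2}, and run a standard dimension shift along the short exact sequences $0\to K_{-j}\to Q_{-j}\to K_{-j-1}\to 0$. Both routes lean on nontrivial results imported from \cite{Kva17} (the paper on Theorem \ref{Theorem:2}, you on Theorem \ref{Theorem:2.5}), so neither is strictly more self-contained, but yours makes the homological mechanism explicit where the paper delegates it to resolving-subcategory dimension theory. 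Two small points you implicitly use and could make explicit: the long exact sequence for the relative derived functors $L_i\nu$ applied to a short exact sequence of objects is available because $P$ and $\nu\circ P=I$ are exact, so the comonad (bar) resolutions form a short exact sequence of complexes --- the paper uses the same fact in the proof of Proposition \ref{Proposition:4}; and $L_i\nu(Q)=0$ for $Q\in\Proj(\cA)$ holds because, under the standing assumption that $\cB$ has enough projectives, every projective of $\cA$ is a direct summand of some $f_!(Q')$ with $Q'\in\Proj(\cB)$, hence is $P$-projective, and $L_i\nu$ vanishes on $P$-projectives.
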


\begin{proof}
This follows from condition \ref{Theorem:4,1} in Theorem \ref{Theorem:4} and the fact that $\dim_{\relGproj{P}{\cA}}(\cA)<\infty$ when $P$ is Iwanaga-Gorenstein.
\end{proof}

Recall that $\cB$ is $\Proj(\cB)$\emphbf{-Gorenstein} if $\Gpdim (B)<\infty$ for all $B\in \cB$ \cite[Corollary 4.13]{Bel00}. 

\begin{Lemma}\label{Proj Gorenstein}
If $\cB$ is $\Proj(\cB)$-Gorenstein, then 
\[
	\Gproj(\cB)= \{B\in \cB\mid  \Ext^1_{\cB}(B,B')=0  \text{ for all } B' \text{ satisfying } \pdim B'<\infty \}.  
\]
\end{Lemma}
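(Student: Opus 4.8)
The plan is to prove the two inclusions separately, writing $\cW$ for the set on the right-hand side. The inclusion $\Gproj(\cB)\subseteq \cW$ holds without the Gorenstein hypothesis and is the routine half. If $B\in\Gproj(\cB)$ is the zeroth syzygy of a totally acyclic complex $Q_{\bul}$, then the defining acyclicity of $\cB(Q_{\bul},Q)$ gives $\Ext^i_{\cB}(B,Q)=0$ for every projective $Q$ and every $i>0$. One upgrades this to arbitrary $B'$ of finite projective dimension by induction on $\pdim B'$: choosing a short exact sequence $0\to K\to Q_0\to B'\to 0$ with $Q_0$ projective and $\pdim K=\pdim B'-1$, the long exact sequence for $\Ext_{\cB}(B,-)$ together with $\Ext^i_{\cB}(B,Q_0)=0$ forces $\Ext^i_{\cB}(B,B')\cong \Ext^{i+1}_{\cB}(B,K)=0$ for all $i>0$. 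In particular $\Ext^1_{\cB}(B,B')=0$, so $B\in\cW$.

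For the reverse inclusion $\cW\subseteq\Gproj(\cB)$ I would use the hypothesis only to guarantee that $\Gpdim B<\infty$. Given $B\in\cW$, the key input is a \emph{Gorenstein projective approximation}: a short exact sequence $0\to K\to G\to B\to 0$ with $G\in\Gproj(\cB)$ and $\pdim K<\infty$. Granting this, the argument finishes quickly: since $B\in\cW$ and $\pdim K<\infty$ we have $\Ext^1_{\cB}(B,K)=0$, so the sequence splits and $B$ is a direct summand of $G$. As $\Gproj(\cB)$ is closed under direct summands by Lemma \ref{Lemma:0.1}, it follows that $B\in\Gproj(\cB)$, completing the proof.

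The crux, and the only place the $\Proj(\cB)$-Gorenstein hypothesis is used, is the existence of this approximation, which rests on $\Gpdim B<\infty$. I would obtain it from the Auslander--Buchweitz approximation theorem applied to the resolving subcategory $\Gproj(\cB)$: it is resolving by Lemma \ref{Lemma:1}, and by condition \ref{Definition:1.5,4} of Definition \ref{Definition:1.5} every Gorenstein projective object embeds into a projective object with Gorenstein projective cokernel, so $\Proj(\cB)$ is a relative cogenerator. Equivalently, one can invoke the completeness of the cotorsion pair whose left class is $\Gproj(\cB)$ and whose right class is the objects of finite projective dimension, valid on a $\Proj(\cB)$-Gorenstein category as in \cite{Bel00}, whose special precover of $B$ is precisely such a sequence $0\to K\to G\to B\to 0$. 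A hands-on alternative proceeds by induction on $\Gpdim B$, reducing via a short exact sequence $0\to B_1\to G_0\to B\to 0$ with $G_0\in\Gproj(\cB)$ and $\Gpdim B_1=\Gpdim B-1$, and then folding the resulting two-term Gorenstein projective resolution into a single short exact sequence with kernel of finite projective dimension. I expect this existence statement, rather than the splitting argument, to be the substantive step.
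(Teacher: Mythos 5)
Your proof is correct and takes essentially the same route as the paper: for the nontrivial inclusion the paper also uses the $\Proj(\cB)$-Gorenstein hypothesis only to get $\Gpdim B<\infty$, invokes \cite[Theorem 1.1]{AB89} (the Auslander--Buchweitz approximation) to produce $0\to B_2\to B_1\to B\to 0$ with $B_1\in\Gproj(\cB)$ and $\pdim B_2<\infty$, splits this sequence using $\Ext^1_{\cB}(B,B_2)=0$, and concludes by closure of $\Gproj(\cB)$ under direct summands. Your dimension-shifting verification of the easy inclusion, which the paper leaves implicit, is also correct.
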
 

\begin{proof}
Assume $\Ext^1_{\cB}(B,B')=0$ for all $B'$ satisfying $\pdim B<\infty'$. Since $\Gpdim (B)<\infty$, there exists an exact sequence $0\to B_2\to B_1\to B\to 0$ such that $B_1\in \Gproj (\cB)$ and $\pdim B_2<\infty$ by \cite[Theorem 1.1]{AB89}. Since $\Ext^1_{\cB}(B,B_2)=0$ by assumption, the sequence is split. Hence, $B$ is a direct summand of $B_1$, and therefore $B\in \Gproj(\cB)$. This proves the claim.
\end{proof}

\begin{Corollary}
If $\cB$ is $\Proj(\cB)$-Gorenstein, then 
\[
\Gproj(\relGproj{P}{\cA})= \Gproj(\cA).
\]
In particular, this holds if $\cB=\Lambda\text{-}\md$ or $\cB=\Lambda\text{-}\Md$ for an Iwanaga-Gorenstein ring $\Lambda$.
\end{Corollary}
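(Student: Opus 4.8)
The plan is to show that the hypothesis forces condition \ref{Theorem:4,2} of Theorem \ref{Theorem:4} to hold, and then to apply that theorem directly; the corollary is essentially a bookkeeping combination of Theorem \ref{Theorem:4} and Lemma \ref{Proj Gorenstein}. Concretely, I would first invoke Lemma \ref{Proj Gorenstein}: under the assumption that $\cB$ is $\Proj(\cB)$-Gorenstein, that lemma yields the description
\[
\Gproj(\cB) = \{B \in \cB \mid \Ext^1_{\cB}(B,B') = 0 \text{ for all } B' \text{ with } \pdim B' < \infty\}.
\]
In particular, whenever $B \in \cB$ satisfies $\Ext^1_{\cB}(B,B') = 0$ for every $B'$ of finite projective dimension, the displayed equality places $B$ in $\Gproj(\cB)$. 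This is precisely condition \ref{Theorem:4,2} of Theorem \ref{Theorem:4}, so that theorem immediately gives $\Gproj(\relGproj{P}{\cA}) = \Gproj(\cA)$.

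For the final assertion I would recall the classical characterization of Iwanaga-Gorenstein rings: if $\Lambda$ is two-sided Noetherian with finite self-injective dimension $n$ on both sides, then every $\Lambda$-module has Gorenstein projective dimension at most $n$. Consequently $\Gpdim(B) < \infty$ for all $B$ in $\cB = \Lambda\text{-}\md$ and for all $B$ in $\cB = \Lambda\text{-}\Md$, so both of these categories are $\Proj(\cB)$-Gorenstein. Applying the first part of the corollary then finishes the proof.

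The main (and really only) obstacle is recalling the standard fact that finite self-injective dimension on both sides forces every module to have finite Gorenstein projective dimension; once this is in hand there is no genuine difficulty, since the defining property of $\Proj(\cB)$-Gorenstein is exactly $\Gpdim(B) < \infty$ for all $B$, and the remainder is an immediate appeal to the two preceding results.
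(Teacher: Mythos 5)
Your proposal is correct and matches the intended argument exactly: the paper states this corollary without proof precisely because it is the immediate combination of Lemma \ref{Proj Gorenstein} (which shows that $\Proj(\cB)$-Gorensteinness yields condition \ref{Theorem:4,2} of Theorem \ref{Theorem:4}) with Theorem \ref{Theorem:4} itself, and the Iwanaga-Gorenstein case follows, as you say, from the standard fact that finite self-injective dimension on both sides bounds $\Gpdim(B)$ for every module $B$, so that $\Lambda\text{-}\md$ and $\Lambda\text{-}\Md$ are $\Proj(\cB)$-Gorenstein.
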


For an abelian category $\cA$ we let $\Omega^{\infty}(\cA)$ denote the collection of objects $A\in \cA$ such that there exists an exact sequence $0\to A\to Q_0\to Q_{-1}\to \cdots$ with $Q_i\in \cA$ projective for all $i\leq 0$. 

\begin{Example}\label{Example:9}
Let $k$ be a field, let $\Lambda_1$ be a finite-dimensional algebra over $k$, and let $\Lambda_2$ be a left coherent $k$-algebra. From Example \ref{Example:5} we have that 
\begin{align*}
 & \Gproj(\relGproj{P}{(\Lambda_1\otimes_k \Lambda_2)\text{-}\md}  =  \{M\in (\Lambda_1\otimes_k \Lambda_2)\text{-}\md \mid \\
 & \text{ } _{\Lambda_1}M\in \Gproj(\Lambda_1\text{-}\Md) \text{ and } _{\Lambda_2}(\Hom_k(\Lambda_1,k)\otimes_{\Lambda_1} M)\in \Gproj(\Lambda_2\text{-}\md)\}.
\end{align*} 
If $\Omega^{\infty}(\Lambda\text{-}\Md)\subset \Gproj(\Lambda\text{-}\Md)$ or 
\begin{multline*}
\Gproj(\Lambda_2\text{-}\md)=\{M\in \Lambda_2\text{-}\md \mid \Ext^1_{\Lambda}(M,M')=0 \\   \text{ for all } M' \text{ satisfying }\pdim M'<\infty \}.
\end{multline*}
then by Theorem \ref{Theorem:4} we have
\[
\Gproj((\Lambda_1\otimes_k \Lambda_2)\text{-}\md)= \Gproj(\relGproj{P}{(\Lambda_1\otimes_k \Lambda_2)\text{-}\md}).
\]
In particular, the equality holds if $\Lambda_1$ or $\Lambda_2$ is Iwanaga-Gorenstein. This description of $\Gproj((\Lambda_1\otimes_k \Lambda_2)\text{-}\md)$ has previously been obtained in \cite{She16}, but it was only shown to hold under the assumption that $\Lambda_1$ is Iwanaga-Gorenstein. 
\end{Example}

\begin{Example}\label{Example:10}
Let $k$ be a field, let $\Lambda_1$ be a finite-dimensional algebra over $k$, and let $\Lambda_2$ be a $k$-algebra. From Example \ref{Example:6} we get that if $\Omega^{\infty}(\Lambda\text{-}\Md)\subset \Gproj(\Lambda\text{-}\Md)$ or
\begin{multline*}
\Gproj(\Lambda_2\text{-}\Md)=\{M\in \Lambda_2\text{-}\Md\mid \Ext^1_{\Lambda}(M,M')=0 \\
  \text{ for all } M' \text{ satisfying }\pdim M'<\infty \}
\end{multline*} then the criteria in Theorem \ref{Theorem:4} holds, and therefore
\begin{align*}
\Gproj((\Lambda_1\otimes_k \Lambda_2)\text{-}\Md) 
= & \{M\in (\Lambda_1\otimes_k \Lambda_2)\text{-}\Md \mid \text{ } _{\Lambda_1}M\in \Gproj(\Lambda_1\text{-}\Md) \\ &\text{ and } _{\Lambda_2}(\Hom_k(\Lambda_1,k)\otimes_{\Lambda_1} M)\in \Gproj(\Lambda_2\text{-}\Md)\}.
\end{align*} 
In particular, this equality holds if $\Lambda_1$ or $\Lambda_2$ are Iwanaga-Gorenstein.
\end{Example}

Since $\Gproj(\relGproj{P}{\cA})$ is closed under direct summands and contains all the projective objects, the projectively stable category $\underline{\Gproj(\relGproj{P}{\cA})}$ is a thick triangulated subcategory of $\underline{\Gproj(\cA )}$.

\begin{Definition}\label{Definition:12}
We define the \emphbf{Gorenstein discrepancy category} of $P$ to be the Verdier quotient $\Discr{P}{\cA}= \underline{\Gproj(\cA )}/\underline{\Gproj(\relGproj{P}{\cA})}$.
\end{Definition}

The triangulated category $\Discr{P}{\cA}$ measures how far $\Gproj(\relGproj{P}{\cA})$ is from $\Gproj(\cA)$. The following example shows that the Gorenstein discrepancy category can be nonzero. 

\begin{Example}\label{Example:11}
Let $k$ be a field, and let $\Lambda_1$ be the path algebra of the quiver
\begin{equation}\label{eq:12}
\begin{tikzpicture}[description/.style={fill=white,inner sep=2pt}]
\matrix(m) [matrix of math nodes,row sep=2.5em,column sep=2.0em,text height=1.5ex, text depth=0.25ex] 
{ 1 & 2 \\ };
\path[->]
(m-1-1) edge node[auto] {$\alpha$} 	    													(m-1-2)
(m-1-2) edge[loop above] node[auto] {$\beta$} 	    	                  (m-1-2);	
\end{tikzpicture}
\end{equation}
with relations $\beta^2 = \beta \circ \alpha = 0$. Let $e_1$ and $e_2$ be the two primitive idempotents of $\Lambda_1$. Note that $\Gproj(\Lambda_1 \text{-}\md)=\Proj(\Lambda_1 \text{-}\md)$. In fact, up to isomorphism the only indecomposable $\Lambda_1$-modules are the two simple modules $S_1$ and $S_2$ concentrated in vertex $1$ and $2$, the two projective modules $P_1=\Lambda e_1$ and $P_2=\Lambda e_2$, and the two injective modules $I_1=\Hom_k(e_1\Lambda,k)$ and $I_2=\Hom_k(e_2\Lambda,k)$. Furthermore, we have an equality $I_2=S_1$. Now since $I_1$ and $I_2$ are injective but not projective, they can't be Gorenstein projective. Also, $S_2$ is not Gorenstein projective since there exists a non-split exact sequence
\[
0\to P_1\to I_2\to S_2\to 0
\]
This shows that $\Gproj(\Lambda_1 \text{-}\md)=\Proj(\Lambda_1 \text{-}\md)$. Now let $\Lambda_2$ be a finite-dimensional $k$-algebra. A module $M\in (\Lambda_1\otimes_k \Lambda_2)\text{-}\md$ can be identified with a representation 
\[
\begin{tikzpicture}[description/.style={fill=white,inner sep=2pt}]
\matrix(m) [matrix of math nodes,row sep=2.5em,column sep=2.0em,text height=1.5ex, text depth=0.25ex] 
{ M_1 & M_2 \\ };
\path[->]
(m-1-1) edge node[auto] {$u$} 	    													(m-1-2)
(m-1-2) edge[loop above] node[auto] {$v$} 	    	                  (m-1-2);	
\end{tikzpicture}
\]
where $M_1,M_2\in \Lambda_2\text{-}\md$ and $u,v$ are morphisms of $\Lambda_2$-modules satisfying $v^2=0$ and $v\circ u=0$. Let
\begin{align*}
& f^*:=\res^{\Lambda_1\otimes_k \Lambda_2}_{\Lambda_2}\colon (\Lambda_1\otimes_k \Lambda_2) \text{-}\md \to \Lambda_2 \text{-}\md \\
& f_!:=\Lambda_1\otimes_k -\colon \Lambda_2 \text{-}\md \to (\Lambda_1\otimes_k \Lambda_2) \text{-}\md \\
& \nu_1:=\Hom_k(\Lambda_1,k)\otimes_{\Lambda_1} -\colon (\Lambda_1\otimes_k \Lambda_2)\text{-} \md \to (\Lambda_1\otimes_k \Lambda_2) \text{-} \md.
\end{align*}
and
\begin{align*}
& g^*:=\res^{\Lambda_1\otimes_k \Lambda_2}_{\Lambda_1}\colon (\Lambda_1\otimes_k \Lambda_2) \text{-}\md \to \Lambda_1 \text{-}\md \\
& g_!:=\Lambda_2\otimes_k -\colon \Lambda_1 \text{-}\md \to (\Lambda_1\otimes_k \Lambda_2) \text{-}\md \\
& \nu_2:=\Hom_k(\Lambda_2,k)\otimes_{\Lambda_2} -\colon (\Lambda_1\otimes_k \Lambda_2)\text{-} \md \to (\Lambda_1\otimes_k \Lambda_2) \text{-} \md.
\end{align*}
be two adjoint pairs with Nakayama functors as in Example \ref{finitely presented}. Let $P_1:=f_!\circ f^*$ and $P_2:=g_!\circ g^*$. We have that 
\begin{multline*}
\Gproj(\relGproj{P_1}{(\Lambda_1\otimes_k \Lambda_2)\text{-}\md}) 
=\{M\in (\Lambda_1\otimes_k \Lambda_2)\text{-}\md \mid \\
\text{ } _{\Lambda_1}M\in \Gproj(\Lambda_1\text{-}\md) \text{ and } _{\Lambda_2}(\Hom_k(\Lambda_1,k)\otimes_{\Lambda_1} M)\in \Gproj(\Lambda_2\text{-}\md)\}
\end{multline*} 
and
\begin{multline*}
\Gproj(\relGproj{P_2}{(\Lambda_1\otimes_k \Lambda_2)\text{-}\md}) 
=\{M\in (\Lambda_1\otimes_k \Lambda_2)\text{-}\md \mid \\
\text{ } _{\Lambda_2}M\in \Gproj(\Lambda_2\text{-}\md) \text{ and } _{\Lambda_1}(\Hom_k(\Lambda_2,k)\otimes_{\Lambda_2} M)\in \Gproj(\Lambda_1\text{-}\md)\}
\end{multline*} 
as in Example \ref{Example:9}. Note that $_{\Lambda_1}M\in \Gproj(\Lambda_1\text{-}\md)=\Proj(\Lambda_1\text{-}\md)$ if and only if the following holds: 
\begin{enumerate}
\item $u$ is a monomorphism;
\item $\im u \cap \im v = (0)$;
\item $\im u \oplus \im v = \Ker v$.
\end{enumerate} 
Also, a simple computation shows that
\[
_{\Lambda_2}(\Hom_k(\Lambda_1,k)\otimes_{\Lambda_1} M)= \Coker u \oplus \Coker v.
\] 
Hence, $M\in \Gproj(\relGproj{P_1}{(\Lambda_1\otimes_k \Lambda_2) \text{-}\md})$ if and only the following holds: 
\begin{enumerate}
\item $u\colon M_1\to M_2$ is a monomorphism;
\item $\im u\cap \im v=(0)$;
\item $\im u\oplus \im v = \Ker v$;
\item $\Coker u, \Coker v\in \Gproj(\Lambda_2\text{-}\md)$. 
\end{enumerate}
Also, $M\in \Gproj(\relGproj{P_2}{(\Lambda_1\otimes_k \Lambda_2)\text{-}\md})$ if and only the following holds:
\begin{enumerate}
\item $M_1,M_2\in \Gproj(\Lambda_2\md)$;
\item $1\otimes_{}u$ is a monomorphism; 
\item $\im (1\otimes_{}u)\cap \im (1\otimes_{}v) = (0)$; 
\item $\im (1\otimes_{}u)\oplus \im (1\otimes_{}v) = \Ker (1\otimes_{}v)$.
\end{enumerate}
where 
\begin{align*}
& 1\otimes_{}u\colon \Hom_k(\Lambda_2,k)\otimes_{\Lambda_2}M_1\to \Hom_k(\Lambda_2,k)\otimes_{\Lambda_2}M_2 \\
& 1\otimes v\colon \Hom_k(\Lambda_2,k)\otimes_{\Lambda_2}M_2\to \Hom_k(\Lambda_2,k)\otimes_{\Lambda_2}M_2.
\end{align*}
Now set $\Lambda_2:=\Lambda_1\op$, and let $Q_2 = \Lambda_2e_2$ and $J_2= \Hom_k(e_2\Lambda_2,k)$ be the projective and injective left $\Lambda_2$-module corresponding to vertex 2. Furthermore, let $s\colon Q_2\to Q_2$ be a nonzero morphism satisfying $s^2=0$ (there exists a unique one up to scalars).  Let $M\in \Lambda_1\otimes \Lambda_2\text{-}\md$ be given by $M_1=0$, $M_2= Q_2$ and $v=s$. Under the isomorphism $\Hom_k(\Lambda_2,k)\otimes_{\Lambda_2}Q_2\cong J_2$ the map $s$ corresponds to a nonzero map $t\colon J_2\to J_2$ satisfying $t^2=0$. There exists a unique such map up to scalars, and it also satisfies $\im t = \ker t$. This shows that $M\in \Gproj(\relGproj{P_2}{(\Lambda_1\otimes_k \Lambda_2)\text{-}\md})$, and $M$ is therefore Gorenstein projective in $(\Lambda_1\otimes_k \Lambda_2)\text{-}\md$. On the other hand, we have that $\im s \neq \Ker s$, and hence $M\notin \Gproj(\relGproj{P_1}{(\Lambda_1\otimes_k \Lambda_2)\text{-}\md})$. This shows that the discrepancy category corresponding to $P_1$ is nonzero.  
\end{Example}

We end this section with a result on the Gorenstein projective dimension of $\cA$.

\begin{Proposition}\label{Proposition:5}
We have the inequality 
\[
\glGpdim \cA \leq \glGpdim \cB + \dim_{\relGproj{P}{\cA}}\cA.
\]
\end{Proposition}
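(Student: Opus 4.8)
The plan is to prove the inequality in two steps, after disposing of the trivial case: if either $\glGpdim \cB$ or $\dim_{\relGproj{P}{\cA}}\cA$ is infinite, then the right-hand side is infinite and there is nothing to show, so I may assume both are finite and put $m:=\glGpdim\cB$. The crux of the argument is the following claim: every object of $\relGproj{P}{\cA}$ has Gorenstein projective dimension at most its $\cB$-side value, i.e.
\[
\dim_{\Gproj(\cA)}(X)\le \dim_{\Gproj(\cB)}(f^*\nu(X)) \qquad \text{for all } X\in \relGproj{P}{\cA}.
\]
In particular $\dim_{\Gproj(\cA)}(X)\le m$ for all such $X$, since $\dim_{\Gproj(\cB)}(f^*\nu(X))\le\glGpdim\cB=m$. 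Granting this, the asserted inequality follows from a routine fact about resolution dimensions: if $\cX$ is a resolving subcategory of $\cA$ with $\dim_{\Gproj(\cA)}(X)\le m$ for every $X\in\cX$, then $\dim_{\Gproj(\cA)}(A)\le\dim_{\cX}(A)+m$ for all $A$. Applying this with $\cX=\relGproj{P}{\cA}$, which is resolving by Proposition \ref{Closure of Gorenstein P-projective}, and taking the supremum over $A\in\cA$ gives $\glGpdim\cA\le\dim_{\relGproj{P}{\cA}}\cA+m$, as desired.

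For the routine fact I would induct on $d:=\dim_{\cX}(A)$. When $d=0$ we have $A\in\cX$, so $\dim_{\Gproj(\cA)}(A)\le m$. When $d\ge 1$, since $\cX$ is generating I choose a short exact sequence $0\to A'\to X_0\to A\to 0$ with $X_0\in\cX$; by the comparison of $\cX$-resolutions \cite[Proposition 2.3]{Sto14} we get $\dim_{\cX}(A')\le d-1$, whence $\dim_{\Gproj(\cA)}(A')\le m+d-1$ by induction. Feeding the sequence into the standard behaviour of Gorenstein projective dimension under extensions, namely $\dim_{\Gproj(\cA)}(A)\le\max\{\dim_{\Gproj(\cA)}(X_0),\dim_{\Gproj(\cA)}(A')+1\}$ (see \cite{Hol04}), yields $\dim_{\Gproj(\cA)}(A)\le\max\{m,m+d\}=m+d$, completing the induction.

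It remains to prove the claim, and this is the step I expect to carry the real content. I would induct on $d':=\dim_{\Gproj(\cB)}(f^*\nu(X))$. If $d'=0$ then $f^*\nu(X)\in\Gproj(\cB)$, so $X\in(f^*\circ\nu)^{-1}(\Gproj(\cB))\cap\relGproj{P}{\cA}=\Gproj(\relGproj{P}{\cA})\subset\Gproj(\cA)$ and $\dim_{\Gproj(\cA)}(X)=0$. If $d'\ge 1$, I use that $\relGproj{P}{\cA}$ is itself a $P$-admissible subcategory (Proposition \ref{Proposition:2*} part \ref{Proposition:2,1*}) to pick, via Definition \ref{Definition:10} part \ref{Definition:10,3}, a short exact sequence $0\to X''\to f_!(B)\to X\to 0$ with $f_!(B)$ being $P$-projective and $X''\in\relGproj{P}{\cA}$. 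Since $L_1\nu$ vanishes on objects of $\relGproj{P}{\cA}$ by Theorem \ref{Theorem:2.5} and $f^*$ is exact, applying $f^*\circ\nu$ produces a short exact sequence
\[
0\to f^*\nu(X'')\to f^*\nu f_!(B)\to f^*\nu(X)\to 0
\]
in $\cB$ whose middle term is projective by Lemma \ref{Lemma:13}. Thus $f^*\nu(X'')$ is a first syzygy of $f^*\nu(X)$, so $\dim_{\Gproj(\cB)}(f^*\nu(X''))\le d'-1$ and the inductive hypothesis gives $\dim_{\Gproj(\cA)}(X'')\le d'-1$. Moreover $f_!(B)$ lies in $\Gproj(\relGproj{P}{\cA})\subset\Gproj(\cA)$, being $P$-projective with projective image under $f^*\circ\nu$, so $\dim_{\Gproj(\cA)}(f_!(B))=0$. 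The extension inequality applied to $0\to X''\to f_!(B)\to X\to 0$ then gives $\dim_{\Gproj(\cA)}(X)\le\max\{0,(d'-1)+1\}=d'$, which proves the claim.

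The main obstacle is precisely this last step: transferring the filtration from the $\cA$-side to the $\cB$-side. The point that makes it work is that $f^*\circ\nu$ sends a $P$-projective cover sequence inside $\relGproj{P}{\cA}$ to a genuine projective cover sequence in $\cB$ — this combines the exactness of $f^*\circ\nu$ on $\relGproj{P}{\cA}$ (from $L_1\nu=0$ and exactness of $f^*$) with the fact that $f^*\nu$ preserves projectives (Lemma \ref{Lemma:13}) — so that a single syzygy step in $\cA$ strictly lowers the Gorenstein projective dimension measured in $\cB$. Once this linkage is in place, both inductions reduce to the standard extension behaviour of Gorenstein projective dimension, which I take as the external input.
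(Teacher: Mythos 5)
Your overall architecture (a syzygy-transfer claim plus a routine subadditivity of resolution dimensions) is reasonable, but the induction step of your claim contains a genuine error. Definition \ref{Definition:10} part \ref{Definition:10,3} only provides a short exact sequence $0\to X''\to A\to X\to 0$ with $A$ a $P$-projective object, i.e.\ a summand of $f_!(B)$ for an \emph{arbitrary} $B\in\cB$, and for such $B$ the object $f^*\nu f_!(B)$ is not projective in $\cB$. Lemma \ref{Lemma:13} does not say what you attribute to it: its proof establishes projectivity of $f^*\nu f_!(Q')$ only for $Q'\in\Proj(\cB)$, via the isomorphism $\cB(f^*\nu f_!(Q'),-)\cong\cB(Q',f^*f_!(-))$, and this functor is exact only when $Q'$ is projective. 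Concretely, for $\cC=k\bA_2$ one computes $i^*\nu i_!(B_1,B_2)\cong (B_1\oplus B_2,\,B_2)$, which is projective in $\cB\times\cB$ only when $B_1$ and $B_2$ are. Your parallel assertion that $f_!(B)\in\Gproj(\relGproj{P}{\cA})\subset\Gproj(\cA)$ fails for the same reason: in $\Mor(\cB)$ one has $f_!(B_1,B_2)\cong(B_1\xrightarrow{1}B_1)\oplus(0\to B_2)$, which by Example \ref{Example:I1} is Gorenstein projective only if $B_1,B_2\in\Gproj(\cB)$. With these two assertions gone, both the drop $\dim_{\Gproj(\cB)}(f^*\nu(X''))\le d'-1$ and your final extension estimate $\dim_{\Gproj(\cA)}(X)\le d'$ collapse.

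The repair is straightforward and brings you very close to the paper's actual proof: since $\cB$, and hence $\cA$, has enough projectives and $\relGproj{P}{\cA}$ is resolving (Proposition \ref{Closure of Gorenstein P-projective}), choose instead $0\to X''\to Q\to X\to 0$ with $Q\in\Proj(\cA)$; then $X''\in\relGproj{P}{\cA}$ automatically, $f^*\nu(Q)$ is genuinely projective because $f^*\circ\nu$ preserves projectives (as used in the proof of Proposition \ref{Proposition:3}), and $Q\in\Gproj(\cA)$ trivially, so the extension inequalities you need degenerate to splicing Gorenstein projective resolutions across a projective middle term. This also removes your dependence on the extension inequality from \cite{Hol04}, which as cited is a statement about modules over a ring and would itself need justification in the abelian generality of this paper; the only external input left is \cite[Proposition 2.3]{Sto14}, which the paper already quotes. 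After this repair, your two nested inductions become an unwound version of the paper's one-pass argument: the paper takes an $(m+n)$-fold syzygy $K$ of $A$ along a projective resolution, notes via \cite[Proposition 2.3]{Sto14} that the syzygies from degree $m$ on lie in $\relGproj{P}{\cA}$, applies $f^*\circ\nu$ to the exact tail to exhibit $f^*\nu(K)$ as an $n$-th syzygy in $\cB$ along projectives, hence $f^*\nu(K)\in\Gproj(\cB)$, and concludes $K\in\Gproj(\relGproj{P}{\cA})\subset\Gproj(\cA)$.
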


\begin{proof}
It is obviously true if $\glGpdim \cB=\infty$ or $\dim_{\relGproj{P}{\cA}}\cA=\infty$. We therefore assume $\glGpdim \cB =n<\infty$ and $\dim_{\relGproj{P}{\cA}}\cA = m<\infty$. Let $A\in \cA$ be arbitrary, and let
\[
0\to K\xrightarrow{i} Q_{n+m}\xrightarrow{s_{n+m}}Q_{n+m-1}\xrightarrow{s_{n+m-1}}\cdots \xrightarrow{s_2}Q_1\xrightarrow{s_1}A\to 0
\]
be an exact sequence in $\cA$ with $Q_j$ projective for $1\leq j\leq n+m$. Since $Q_j$ is in $\relGproj{P}{\cA}$ and $\dim_{\relGproj{P}{\cA}}A\leq m$, we get that $\Ker s_j\in \relGproj{P}{\cA}$ for $j\geq m$. In particular, this implies that the sequence
\begin{multline*}
0\to f^*\nu(K)\xrightarrow{f^*\nu (i)}f^*\nu (Q_{n+m})\xrightarrow{f^*\nu(s_{n+m})}\cdots \\
 \xrightarrow{f^*\nu(s_{m+2})}f^*\nu (Q_{m+1})\xrightarrow{}f^*\nu (\Ker s_m)\to 0
\end{multline*}
is exact. Since $f^*\nu (Q_j)$ is projective in $\cB$ and $\Gpdim f^*\nu (\Ker s_m)\leq n$, we get that $f^*\nu (K)\in \Gproj(\cB )$. Hence, $K\in \Gproj(\relGproj{P}{\cA}) = \Gproj(\cA )$, and the claim follows.
\end{proof}

\section{Application to functor categories}\label{Application to functor categories}
Our goal in this section is to compute $\Gproj(\cB^{\cC})$ in examples using the theory we have developed. 

\subsection{Preliminaries}\label{Subsection Preliminaries}

Let $k$ be a commutative ring, and let $\cC$ be a small $k$-linear category. Recall that a right $\cC$-module is a $k$-linear functor $\cC\op\to k\text{-}\Md$. We let $\Md\text{-}\cC$ denote the categories of right $\cC$-modules.  A right $\cC$-module $M$ is called \emphbf{finitely presented} if there exists an exact sequence  
\[
\oplus_{i=1}^m\cC(-,c_i) \to \oplus_{j=1}^n\cC(-,d_j)\to M\to 0 
\]
in $\Md \text{-}\cC$ for objects $c_i, d_j\in \cC$. The category of finitely presented right $\cC$-modules is denoted by $\md \text{-}\cC$. Dually, the category of left $\cC$-modules and finitely presented left $\cC$-modules are $\Md \text{-}\cC\op$ and $\md \text{-}\cC\op$, respectively. 

Let $\cB$ be a $k$-linear abelian category, and let $\cB^{\cC}$ denote the category of $k$-linear functors from $\cC$ to $\cB$. Up to isomorphism there exists a unique functor
\begin{align*}
- \otimes_{\cC}- \colon (\md\text{-} \cC) \otimes \cB^{\cC} \to \cB 
\end{align*} 
such that $\cC(c,-)\otimes_{\cC}F=F(c)$ and the induced functor $-\otimes_{\cC} F\colon\md\text{-} \cC \to \cB$ is right exact for all $F\in \cB^{\cC}$, see chapter 3 in \cite{Kel05} or \cite{OR70} for details. If $\cC=k$ we get a functor 
\[
-\otimes_{k}- \colon (k\text{-}\md) \otimes \cB \to \cB
\]
For  $N\in \cC\text{-}\md$ and $B\in \cB$ we have a functor $N\otimes_k B\in \cB^{\cC}$ given by $c\mapsto N(c)\otimes_k B$. If furthermore $M\in \md\text{-}\cC$ then we get a natural isomorphism 
\[
M\otimes_{\cC}(N\otimes_k B) \cong (M\otimes_{\cC}N)\otimes_k B
\]
see (3.23) in \cite{Kel05}.

We use the same terminology as in \cite{DSS17} in the following definition.
\begin{Definition}\label{Definition:12,5}
Let $\cC$ be a small $k$-linear category.
\begin{enumerate}
\item  $\cC$ is \emphbf{locally bounded} if for any object $c\in \cC$ there are only finitely many objects in $\cC$ mapping nontrivially in and out of $c$. This means that for each $c\in \cC$ we have
\begin{align*}
& \cC(c,c')\neq 0 \quad \text{for only finitely many } c'\in \cC \\
& \cC(c'',c)\neq 0 \quad \text{for only finitely many } c''\in \cC.
\end{align*}
\item  $\cC$ is \emphbf{Hom-finite} if $\cC(c,c')$ is a finitely generated projective $k$-module for all $c,c'\in \cC$.
\end{enumerate}
\end{Definition}
If $\cC$ is locally bounded and Hom-finite, and $M\in \Md\text{-}\cC$ satisfies
\begin{enumerate}
\item $M(c)$ is a finitely generated projective $k$-module for all $c\in \cC$
\item $M(c)\neq 0$ for only finitely many $c\in \cC$
\end{enumerate}
then it follows from \cite[Lemma 5.2.2]{Kva17} that $M\in \md\text{-}\cC$.

Let $k(\ob\text{-} \cC)$ be the category with the same objects as $\cC$, and with morphisms
\[
k(\ob \text{-}\cC)(c_1,c_2) =
\begin{cases}
0 & \text{if $c_1\neq c_2$},\\
k & \text{if $c_1=c_2$}.
\end{cases}
\]
The functor category $\cB^{k(\ob\text{-} \cC)}$ is just a product of copies of $\cB$, indexed over the objects of $\cC$. Let $i\colon k(\ob\text{-} \cC)\to \cC$ be the inclusion. We have functors 
\begin{align*}
& i_!\colon \cB^{k(\ob\text{-} \cC)}\to \cB^{\cC} \quad i_!((B^c)_{c\in \cC})= \bigoplus_{c\in \cC}\cC(c,-)\otimes_k B^c \\
& i^*\colon \cB^{\cC}\to \cB^{k(\ob\text{-} \cC)} \quad \quad i^*(F)= (F(c))_{c\in \cC} \\
& \nu\colon \cB^{\cC}\to \cB^{\cC} \quad \nu(F)= D(\cC)\otimes_{\cC}F 
\end{align*} 
where $D=\Hom_k(-,k)$ and $(D(\cC)\otimes_{\cC}F)(c)= D(\cC(c,-))\otimes_{\cC}F$, see Subsection 5.3 in \cite{Kva17} for details.

\begin{Theorem}[Theorem 5.3.3 in \cite{Kva17}]\label{Nakayama functor on functor categories}
Let $\cC$ be a small, $k$-linear, locally bounded and Hom-finite category, let $\cB$ be a $k$-linear abelian category, and let $i_!$, $i^*$ and $\nu$ be as above. Then $\nu$ is a Nakayama functor relative to $i_!\dashv i^*$.
\end{Theorem}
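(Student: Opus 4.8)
The goal is to verify the three defining conditions of a Nakayama functor relative to the adjunction $i_!\dashv i^*$, namely that $\nu$ admits a right adjoint, that $\nu\circ i_!$ is right adjoint to $i^*$, and that the unit of $\nu\dashv\nu^-$ restricts to an isomorphism on objects in the image of $i_!$. The plan is to establish each of these using the explicit formula $\nu(F)=D(\cC)\otimes_{\cC}F$ together with the local boundedness and Hom-finiteness hypotheses on $\cC$, which are exactly what make the relevant Hom- and tensor-functors behave well.

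First I would construct the right adjoint $\nu^-$ explicitly. Since $\nu$ is given by the weighted colimit $D(\cC)\otimes_{\cC}-$, its right adjoint should be the weighted limit $\nu^-(G)=\Hom_{\cC}(D(\cC),G)$, i.e. the functor sending $c$ to an appropriate Hom-object built from $D(\cC(-,c))$. The adjunction $\nu\dashv\nu^-$ then follows from the standard tensor-hom adjunction for modules over a $k$-linear category (see Kelly, chapter 3), provided the objects involved are small enough for these constructions to exist in $\cB$; here the Hom-finiteness of $\cC$ guarantees that $D(\cC(c,-))$ is finitely generated projective over $k$ in each component, and local boundedness guarantees the relevant (co)limits are finite, so no completeness assumption on $\cB$ beyond being abelian is needed.

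Next I would verify condition (i), that $\nu\circ i_!$ is right adjoint to $i^*$. Using the explicit formula for $i_!$ as $\bigoplus_{c}\cC(c,-)\otimes_k B^c$ and the associativity isomorphism $M\otimes_{\cC}(N\otimes_k B)\cong(M\otimes_{\cC}N)\otimes_k B$ recorded just before the theorem, one computes $\nu(i_!((B^c)_c))$ componentwise; the point is that $D(\cC)\otimes_{\cC}(\cC(c,-)\otimes_k B^c)$ collapses, via the Yoneda-type isomorphism $\cC(c,-)\otimes_{\cC}(-)\cong\operatorname{ev}_c$, to something of the form $D(\cC(c,-))\otimes_k B^c$, which is precisely the formula for a right adjoint $i_*$ of $i^*$. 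Checking that this identification is natural and realizes the adjunction $i^*\dashv\nu\circ i_!$ is the computational heart, but it is a routine manipulation of the defining isomorphisms of $\otimes_{\cC}$ once the collapse is in place. For condition (ii), I would apply $\nu^-$ to the above and show the unit $\lambda_{i_!(B)}$ is an isomorphism; concretely $\nu^-\nu i_!((B^c)_c)$ should reduce, by the same tensor-hom and Yoneda computations, back to $i_!((B^c)_c)$, with the unit map being the canonical comparison that is invertible because $D(\cC(c,-))$ is finitely generated projective over $k$ so that the natural map $N\to\Hom_k(D(N),k)$ is an isomorphism.

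The main obstacle I anticipate is bookkeeping with the weighted (co)limits and confirming that the collapse isomorphisms are genuinely natural and compatible with the unit/counit data, rather than merely objectwise. In particular, the delicate point is that $\nu^-$ need not be computed by a naive pointwise formula unless $\cC$ is locally bounded; the finiteness hypotheses are what let one replace infinite products by finite biproducts and interchange $D=\Hom_k(-,k)$ with the relevant colimits. Since the statement cites Theorem 5.3.3 of \cite{Kva17}, I would ultimately defer the full verification to that reference, but the strategy above — exhibit $\nu^-$ as the weighted limit, use associativity and Yoneda to collapse $\nu i_!$, and use finite-dimensionality over $k$ to see the double-dual unit is invertible — is the natural route to an independent proof.
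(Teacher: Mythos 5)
The paper does not actually prove this theorem: it is imported verbatim as Theorem 5.3.3 of \cite{Kva17}, so there is no in-paper argument to compare against. Your outline --- take $\nu^-$ to be the weighted limit $\Hom_{\cC}(D(\cC),-)$, collapse $\nu\circ i_!$ to the right adjoint of $i^*$ via the associativity isomorphism $M\otimes_{\cC}(N\otimes_k B)\cong (M\otimes_{\cC}N)\otimes_k B$ and the co-Yoneda collapse, and deduce invertibility of the unit on the image of $i_!$ from the double-dual isomorphism for the finitely generated projective $k$-modules $\cC(c,d)$, with local boundedness making all pointwise sums and products finite --- is precisely the verification carried out in that reference, so your proposal is correct and takes essentially the same approach as the cited proof.
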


\begin{Theorem}[Theorem 5.3.4 in \cite{Kva17}]\label{Gorenstein locally bounded Hom-finite}
Let $\cC$ be a small, $k$-linear, locally bounded, and Hom-finite category, and let $i_!$, $i^*$ and $\nu$ be as above with $\cB=k\text{-}\Md$. Then  
\[
  \sup_{c\in \cC}(\pdim D(\cC(-,c)))<\infty \quad \text{and} \quad \sup_{c\in \cC}(\pdim D(\cC(c,-)))<\infty
\] 
if and only if the endofunctor $P=i_!\circ i^*\colon \cC\text{-}\Md\to \cC\text{-}\Md$ is Iwanaga-Gorenstein. In this case we have that
\[
\sup_{c\in \cC}(\pdim D(\cC(-,c))) = \sup_{c\in \cC}(\pdim D(\cC(c,-))).
\]
and this number is equal to the Gorenstein dimension of the functor $P$.
\end{Theorem}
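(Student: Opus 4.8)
The plan is to reduce everything to the two derived functors $L_i\nu$ and $R^i\nu^-$, whose vanishing ranges are exactly what Definition \ref{Definition:9} and Theorem \ref{Theorem:2} are phrased in terms of. Throughout I identify $\cB^{\cC}$ with the category $\cC\text{-}\Md$ of left $\cC$-modules (since $\cB=k\text{-}\Md$), under which $\nu=D(\cC)\otimes_{\cC}-$, the $P$-projectives are the summands of the modules $\bigoplus_{c}\cC(c,-)\otimes_k V^c$, and the genuine projectives are recovered by taking each $V^c$ projective over $k$. The goal is to show that the smallest $r$ with $L_i\nu=0$ for $i>r$ equals $\sup_c\pdim D(\cC(c,-))$, that the smallest $s$ with $R^i\nu^-=0$ for $i>s$ equals $\sup_c\pdim D(\cC(-,c))$, and then to read off all three assertions from Theorem \ref{Theorem:2}.

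First I would compute the derived functors explicitly. Every genuine projective is $P$-projective and hence adapted to $\nu$, so $L_i\nu$ may be computed from an ordinary projective resolution, yielding
\[
L_i\nu(F)(c)\;\cong\;\Tor^{\cC}_i\big(D(\cC(c,-)),F\big),
\]
where $D(\cC(c,-))$ is the right $\cC$-module $\Hom_k(\cC(c,-),k)$. Thus the smallest $r$ with $L_i\nu=0$ for $i>r$ equals $\sup_c\flatdim D(\cC(c,-))$. For the other side, the adjunction $\nu\dashv\nu^-$ gives $\nu^-(G)(c)\cong\Hom_{\cC}\big(\nu(\cC(c,-)),G\big)$, and a short computation identifies $\nu(\cC(c,-))$ with the left $\cC$-module $D(\cC(-,c))\colon c'\mapsto D(\cC(c',c))$; hence
\[
R^i\nu^-(G)(c)\;\cong\;\Ext^i_{\cC}\big(D(\cC(-,c)),G\big),
\]
so that the smallest $s$ with $R^i\nu^-=0$ for $i>s$ equals $\sup_c\pdim D(\cC(-,c))$ directly, with no further work.

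The main obstacle is the asymmetry between these two descriptions: the $R^i\nu^-$ side delivers a projective dimension, but the $L_i\nu$ side naturally delivers a \emph{flat} dimension, whereas the statement demands $\sup_c\pdim D(\cC(c,-))$. The technical heart is therefore to prove $\flatdim D(\cC(c,-))=\pdim D(\cC(c,-))$ for each $c$, and this is exactly where Hom-finiteness and local boundedness are used. Since $\cC(c,-)$ is finitely generated projective over $k$ at each object and supported on finitely many objects, \cite[Lemma 5.2.2]{Kva17} shows $D(\cC(c,-))$ is finitely presented; combined with the good behaviour of finitely presented $\cC$-modules under syzygies (and the fact that a finitely presented flat module is projective), a module of finite flat dimension has equal projective dimension. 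Verifying the requisite finiteness of syzygies in this generality is the delicate part of the argument.

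Granting this, the theorem follows formally. If both suprema are finite, then $L_i\nu=0$ and $R^i\nu^-=0$ for $i$ exceeding their maximum, so $P$ is Iwanaga-Gorenstein; conversely, if $P$ is Iwanaga-Gorenstein then $r,s<\infty$, and the flat-equals-projective comparison upgrades $\sup_c\flatdim D(\cC(c,-))=r<\infty$ to $\sup_c\pdim D(\cC(c,-))<\infty$, while $\sup_c\pdim D(\cC(-,c))=s<\infty$ is immediate. Finally, in the Iwanaga-Gorenstein case Theorem \ref{Theorem:2} identifies $r$, $s$ and $\dim_{\relGproj{P}{\cA}}(\cA)$ with the common Gorenstein dimension of $P$; together with $r=\sup_c\pdim D(\cC(c,-))$ and $s=\sup_c\pdim D(\cC(-,c))$ this simultaneously gives the equality of the two suprema and their identification with the Gorenstein dimension.
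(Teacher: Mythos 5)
The paper contains no proof of this statement: it is imported wholesale as Theorem 5.3.4 of \cite{Kva17}, so the only in-paper ``proof'' is the citation. Your reconstruction is correct, and it follows what is essentially the route of the source: identify $L_i\nu(F)(c)\cong \Tor^{\cC}_i(D(\cC(c,-)),F)$ and $R^i\nu^-(G)(c)\cong \Ext^i_{\cC}(D(\cC(-,c)),G)$ (both identifications are legitimate here, since $\cC\text{-}\Md$ has enough projectives and injectives and evaluation at $c$ is exact, so the comonad-derived functors of Proposition \ref{Right and left derived functors} agree with the classical ones), read off that the vanishing bounds $r$ and $s$ of Theorem \ref{Theorem:2} are $\sup_{c}\flatdim D(\cC(c,-))$ and $\sup_{c}\pdim D(\cC(-,c))$ respectively, and then let Theorem \ref{Theorem:2} do the final bookkeeping.

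The one step you flag as delicate --- that $\flatdim D(\cC(c,-))=\pdim D(\cC(c,-))$ --- does close, and it is worth recording why, since it is the only place the hypotheses on $\cC$ are really spent. The module $M=D(\cC(c,-))$ has values that are finitely generated projective over $k$ (Hom-finiteness) and is supported on finitely many objects (local boundedness), hence is finitely presented by \cite[Lemma 5.2.2]{Kva17}. Cover it by $Q=\bigoplus_{c'\colon M(c')\neq 0}\cC(-,c')\otimes_k M(c')$; by local boundedness $Q$ is again of this type, and since each value $M(c'')$ is projective over $k$, the epimorphism $Q(c'')\to M(c'')$ splits $k$-linearly, so the syzygy has values that are direct summands of the finitely generated projective $k$-modules $Q(c'')$ and finite support --- i.e.\ it lies in the same class and is again finitely presented. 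Iterating yields a resolution by finitely generated projectives with all syzygies finitely presented, so if $\flatdim M=n<\infty$ the $n$-th syzygy is finitely presented and flat, hence projective (the usual Lazard-type argument works in $\Md\text{-}\cC$), giving $\pdim M=\flatdim M$; and if $\flatdim M=\infty$ then trivially $\pdim M=\infty$ as well. With this inserted, your final paragraph --- both suprema finite iff $P$ is Iwanaga-Gorenstein, and in that case $r=s$ forces the two suprema to coincide with the Gorenstein dimension of $P$ --- is exactly right.
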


\subsection{Properties of locally bounded and Hom-finite categories}

In this subsection we fix a small, $k$-linear, locally bounded and Hom-finite category $\cC$  and a $k$-linear abelian category $\cB$. Let $M$ be a finitely presented right $\cC$-module. Since
\[
((M\otimes_{\cC}-)\circ i_!\circ i^*)(F)\cong  \bigoplus_{c\in \cC}M(c)\otimes_k F(c)
\]
it follows that the functor $(M\otimes_{\cC}-)\circ i_!\circ i^*\colon \cB^{\cC}\to \cB^{\cC}$ is exact if $M(c)$ is a finitely generated projective $k$-module for all $c\in \cC$. By Proposition \ref{Right and left derived functors} part \ref{Right and left derived functors:2} we have that $i_!$ is adapted to $M\otimes_{\cC}-$, and hence the left derived functor
\[
\Tor^{\cC}_n(M,-):=L_n(M\otimes_{\cC}-).
\]
exists. 

\begin{Lemma}\label{Lemma:23}
Let $0\to M_3 \xrightarrow{f} M_2\xrightarrow{g} M_1\to 0$ be an exact sequence of finitely presented right $\cC$-modules, and assume $M_i(c)$ is a finitely generated projective $k$-module for all $i$ and all $c\in \cC$. Then there exists a long exact sequence of functors
	\begin{multline*}
\cdots \to \Tor_{i+1}^{\cC}(M_1,-)\to \Tor_i^{\cC}(M_3,-)\to \Tor_i^{\cC}(M_2,-)\to \Tor_{i}^{\cC}(M_1,-)\to \\
\cdots \to \Tor_1^{\cC}(M_1,-)\to (M_3\otimes_{\cC}-) \xrightarrow{f\otimes 1} (M_2\otimes_{\cC}-)  \xrightarrow{g\otimes 1} (M_1\otimes_{\cC}-)\to 0.
\end{multline*}  
\end{Lemma}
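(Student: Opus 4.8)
The plan is to construct the long exact sequence by reducing the claim to a known homological fact about derived functors of a right-exact functor, applied to the endofunctor $M \otimes_{\cC} -$ on the abelian category $\cB^{\cC}$. First I would observe that I already have, for each finitely presented right $\cC$-module $N$ with $N(c)$ finitely generated projective over $k$, that $i_!$ is adapted to $N \otimes_{\cC} -$ (by the discussion preceding the lemma, via Proposition \ref{Right and left derived functors} part \ref{Right and left derived functors:2}), so that the left derived functors $\Tor^{\cC}_n(N,-) = L_n(N \otimes_{\cC} -)$ all exist. The three modules $M_1, M_2, M_3$ in the short exact sequence each satisfy this hypothesis, so all three families of derived functors are defined and $i_!$ is adapted to each of $M_i \otimes_{\cC} -$ simultaneously.

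The key step is to apply Lemma \ref{Lemma:5**} part \ref{Lemma:5**:2}. Take $G_i := M_i \otimes_{\cC} -$, which are right exact, with $\eta := f \otimes 1$ and $\epsilon := g \otimes 1$, and take the adjunction $L' \dashv R'$ to be $i_! \dashv i^*$. The hypothesis I must verify is that precomposing the sequence $0 \to G_3 \to G_2 \to G_1 \to 0$ with $L' \circ R' = i_! \circ i^*$ yields a \emph{short} exact sequence of functors. Using the computation
\[
(M_i \otimes_{\cC} -) \circ i_! \circ i^* \, (F) \cong \bigoplus_{c \in \cC} M_i(c) \otimes_k F(c)
\]
recalled just before the lemma, the sequence in question becomes, objectwise at each $F$, the direct sum over $c \in \cC$ of
\[
0 \to M_3(c) \otimes_k F(c) \to M_2(c) \otimes_k F(c) \to M_1(c) \otimes_k F(c) \to 0.
\]
This is exact because $0 \to M_3(c) \to M_2(c) \to M_1(c) \to 0$ is a short exact sequence of finitely generated projective $k$-modules, hence split, so it stays exact after applying $- \otimes_k F(c)$; a direct sum of short exact sequences is short exact. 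Thus the hypothesis of Lemma \ref{Lemma:5**} part \ref{Lemma:5**:2} holds, and its conclusion is precisely the desired long exact sequence
\[
\cdots \to L_1 G_1 \to G_3 \xrightarrow{\eta} G_2 \xrightarrow{\epsilon} G_1 \to 0
\]
which, rewritten with $L_n G_i = \Tor^{\cC}_n(M_i,-)$, is the stated sequence.

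The only genuine point requiring care is the verification that the short exact sequence of modules splits after evaluation at each $c$: this rests on $M_1(c)$ being projective over $k$, which is guaranteed by hypothesis, so that $0 \to M_3(c) \to M_2(c) \to M_1(c) \to 0$ is $k$-split and remains exact under $- \otimes_k F(c)$. I expect this to be the main (and essentially only) obstacle, and it is mild; everything else is a direct invocation of Lemma \ref{Lemma:5**} once the functors and natural transformations are identified. I would also note in passing that the identification $L_n(M_i \otimes_{\cC} -) = \Tor^{\cC}_n(M_i,-)$ is exactly the definition given before the lemma, so no further bookkeeping on the indexing of derived functors is needed.
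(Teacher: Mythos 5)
Your proposal is correct and follows essentially the same route as the paper: evaluate the sequence of functors at the $P$-projective objects $i_!i^*(F)\cong \bigoplus_{c\in\cC}\cC(c,-)\otimes_k F(c)$, check exactness there, and invoke Lemma \ref{Lemma:5**} part \ref{Lemma:5**:2}. The only difference is that you spell out the step the paper leaves implicit, namely that exactness of $0\to M_3(c)\otimes_k F(c)\to M_2(c)\otimes_k F(c)\to M_1(c)\otimes_k F(c)\to 0$ holds because projectivity of $M_1(c)$ makes the module sequence $k$-split, which is a correct and worthwhile addition.
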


\begin{proof}
Consider the sequence 
\[
(M_3\otimes_{\cC}-)\xrightarrow{f\otimes 1} (M_2\otimes_{\cC}-)  \xrightarrow{g\otimes 1} (M_1\otimes_{\cC}-)
\]
of functors. Evaluating at the object $i_!i^*(F)= \bigoplus_{c\in \cC}\cC(c,-)\otimes_k F(c)$ gives the exact sequence
\[
0\to \bigoplus_{c\in \cC}M_3(c)\otimes_k F(c) \xrightarrow{f\otimes 1} \bigoplus_{c\in \cC}M_2(c)\otimes_k F(c)\xrightarrow{g\otimes 1} \bigoplus_{c\in \cC}M_1(c)\otimes_k F(c)\to 0
\]
The claim follows therefore by Lemma \ref{Lemma:5**}. 
\end{proof}

From now on we let $P_{\cB^{\cC}}=i_!\circ i^*$ denote the endofunctor on $\cB^{\cC}$ and $P_{\cC\text{-}\Md}$ the endofunctor on $\cC\text{-}\Md$ in Theorem \ref{Gorenstein locally bounded Hom-finite}. 

\begin{Lemma}\label{Lemma:24}
Assume $P_{\cC\text{-}\Md}$ is $n$-Gorenstein. Then $P_{\cB^{\cC}}$ is $m$-Gorenstein where $m\leq n$.
\end{Lemma}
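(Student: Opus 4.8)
The plan is to deduce the statement from the vanishing of the derived functors of the Nakayama functor on $\cB^{\cC}$, and then to invoke Definition~\ref{Definition:9} together with Theorem~\ref{Theorem:2}. Concretely, it suffices to prove that $L_i\nu = 0$ and $R^i\nu^- = 0$ on $\cB^{\cC}$ for all $i>n$: granting this, $P_{\cB^{\cC}}$ is Iwanaga-Gorenstein by Definition~\ref{Definition:9}, and its Gorenstein dimension $m$ then satisfies $m\leq n$ by Theorem~\ref{Theorem:2}, since $m$ is the smallest $r$ with $L_i\nu=0$ for all $i>r$.

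First I would reduce the computation of $L_i\nu$ to a pointwise $\Tor$-computation. Since $\nu(F)(c)=D(\cC(c,-))\otimes_{\cC}F$ and evaluation $\cB^{\cC}\to\cB$, $F\mapsto F(c)$, is exact, I pick a $P$-projective resolution $Q_{\bullet}\to F$; recall that $i_!$ is adapted both to $\nu$ and, by Proposition~\ref{Right and left derived functors}, to each functor $D(\cC(c,-))\otimes_{\cC}-$ (as $D(\cC(c,-))$ is finitely presented with finitely generated projective $k$-components). Applying $\nu$ and evaluating at $c$ gives
\[
(L_i\nu(F))(c)\cong H_i\big(D(\cC(c,-))\otimes_{\cC}Q_{\bullet}\big)\cong \Tor^{\cC}_i(D(\cC(c,-)),F).
\]
Thus $L_i\nu=0$ on $\cB^{\cC}$ if and only if $\Tor^{\cC}_i(D(\cC(c,-)),F)=0$ for every $c\in\cC$ and every $F\in\cB^{\cC}$.

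The key input is Theorem~\ref{Gorenstein locally bounded Hom-finite}: because $P_{\cC\text{-}\Md}$ is $n$-Gorenstein, we have $\pdim D(\cC(c,-))\leq n$ in $\Md\text{-}\cC$ for every $c$. Fixing $c$ and writing $M:=D(\cC(c,-))$, I choose a projective resolution of $M$ by representable right $\cC$-modules; since $\pdim M\leq n$, the $n$-th syzygy $\Omega^n M$ is projective. As $\cC$ is locally bounded and Hom-finite, each syzygy $\Omega^j M$ has finite support and finitely generated projective $k$-components (the latter because every pointwise surjection of finitely generated projective $k$-modules splits), so by the finite-presentation criterion recalled after Definition~\ref{Definition:12,5} each $\Omega^j M$ lies in $\md\text{-}\cC$. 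Hence the short exact sequences $0\to\Omega^{j+1}M\to Q_j\to\Omega^j M\to 0$ satisfy the hypotheses of Lemma~\ref{Lemma:23}. The resulting long exact sequences of $\Tor^{\cC}(-,F)$-functors, combined with the $\Tor$-acyclicity of the projective modules $Q_j$ for every $F\in\cB^{\cC}$ (tensoring with a representable right $\cC$-module is an evaluation functor $F\mapsto F(d)$, hence exact in $F$), yield the dimension-shifting isomorphisms
\[
\Tor^{\cC}_i(M,F)\cong \Tor^{\cC}_{i-n}(\Omega^n M,F)=0\qquad(i>n),
\]
the last equality because $\Omega^n M$ is projective. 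Therefore $L_i\nu=0$ on $\cB^{\cC}$ for all $i>n$. For $R^i\nu^-$ I would argue dually: passing to $(\cB^{\cC})\op\cong(\cB\op)^{\cC\op}$ turns $\nu^-$ into a Nakayama functor of the above type, the relevant module becomes $D(\cC(-,c))$, and Theorem~\ref{Gorenstein locally bounded Hom-finite} also gives $\sup_{c}\pdim D(\cC(-,c))=n$; the same computation then shows $R^i\nu^-=0$ for $i>n$.

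The step I expect to be the main obstacle is verifying that Lemma~\ref{Lemma:23} applies at every stage of the dimension shift, that is, that all syzygies $\Omega^j M$ are finitely presented right $\cC$-modules with finitely generated projective $k$-components. This is exactly where local boundedness (finite support), Hom-finiteness (projectivity of the components), and the splitting of pointwise surjections of projectives are used, and it is also what lets a single resolution of $M$ compute $\Tor^{\cC}_i(M,F)$ simultaneously for every abelian $\cB$, not just for $\cB=k\text{-}\Md$. Once this is established, the passage from the vanishing of $L_i\nu$ and $R^i\nu^-$ beyond degree $n$ to the bound $m\leq n$ is immediate from Definition~\ref{Definition:9} and Theorem~\ref{Theorem:2}.
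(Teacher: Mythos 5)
Your proof is correct and takes essentially the same route as the paper: both reduce $L_j\nu$ to $\Tor^{\cC}_j(D(\cC(c,-)),-)$, obtain a finite projective resolution of $D(\cC(c,-))$ by finitely presented modules from Theorem \ref{Gorenstein locally bounded Hom-finite}, apply Lemma \ref{Lemma:23} with dimension shifting to get vanishing for $j>n$, argue dually for $R^j\nu^-$, and conclude $m\leq n$ via Theorem \ref{Theorem:2}. Your careful verification that each syzygy lies in $\md\text{-}\cC$ with pointwise finitely generated projective components (so that Lemma \ref{Lemma:23} applies at every stage) simply makes explicit what the paper's appeal to a projective resolution in $\md\text{-}\cC$ leaves implicit.
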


\begin{proof}
Let $c\in \cC$ be arbitrary. By Theorem \ref{Gorenstein locally bounded Hom-finite} there exists an exact sequence 
\[
0\to M_n\to M_{n-1}\to \cdots \to M_1\to M_0\to D(\cC(c,-))\to 0
\]
in $\md\text{-} \cC$ where $M_i$ are projective. By Lemma \ref{Lemma:23} and dimension shifting we get that 
\[
\Tor_j^{\cC}(D(\cC(c,-)),-)\colon \cB^{\cC}\to \cB
\]
is $0$ for all $j\geq n+1$. Since $c\in \cC$ was arbitrary we get that 
\[
L_j\nu= \Tor_j^{\cC}(D(\cC),-)\colon \cB^{\cC}\to \cB^{\cC}
\]
is $0$ for $j\geq n+1$. Dually, we also have that $R^j\nu^-=0$ for $j\geq n+1$. The claim follows.
\end{proof}

A small $k$-linear category $\cC'$ is called \emphbf{left Gorenstein} if 
\[
\glGpdim\cC'\text{-}\Md< \infty. 
\]
Note that by \cite[Theorem 4.16]{Bel00} the category $\cC'$ is left Gorenstein if and only if  $\glGidim \cC'\text{-}\Md <\infty$, where $\glGidim \cC'\text{-}\Md$ is the global Gorenstein injective dimension of $\cC'\text{-}\Md$. Furthermore, if $\cC'$ is left Gorenstein then 
\[
\glGpdim\cC'\text{-}\Md= \glGidim \cC'\text{-}\Md
\] 
and $\cC'$ is called \emphbf{left} $m$\emphbf{-Gorenstein} if this common number is $m$.  

\begin{Theorem}\label{Theorem:6}
Let $\cC'$ be a small $k$-linear category, and assume $\cC'$ is left $m$-Gorenstein. Furthermore, assume the endofunctor $P_{\cC\text{-}\Md}$ is $n$-Gorenstein. Then the category $\cC'\otimes \cC$ is left $p$-Gorenstein where $p\leq m + n$. 
\end{Theorem}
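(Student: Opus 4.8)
The plan is to recognize $(\cC'\otimes\cC)\text{-}\Md$ as a functor category with coefficients in $\cC'\text{-}\Md$, and then to feed the relative resolution dimension bound of Lemma \ref{Lemma:24} into the general estimate of Proposition \ref{Proposition:5}. First I would record the identification of module categories: a left $\cC'\otimes\cC$-module is a $k$-linear functor $\cC'\otimes\cC\to k\text{-}\Md$, so the universal property of the tensor product of $k$-linear categories gives an equivalence of $k$-linear abelian categories
\[
(\cC'\otimes\cC)\text{-}\Md \;\simeq\; (\cC'\text{-}\Md)^{\cC}.
\]
Set $\cB_0:=\cC'\text{-}\Md$. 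This is a $k$-linear abelian category with enough projectives, and since $\cC'$ is left $m$-Gorenstein we have $\glGpdim\cB_0 = m$. Because $\cC$ is locally bounded and Hom-finite, Theorem \ref{Nakayama functor on functor categories} supplies the adjunction $i_!\dashv i^*\colon \prod_{c\in\cC}\cB_0\to \cB_0^{\cC}$ together with a Nakayama functor $\nu$ on $\cB_0^{\cC}$, so the whole apparatus of Section \ref{Lifting Frobenius exact subcategories} applies to $\cA:=\cB_0^{\cC}\simeq(\cC'\otimes\cC)\text{-}\Md$.

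Next I would bound the relative resolution dimension. By hypothesis $P_{\cC\text{-}\Md}$ is $n$-Gorenstein, so Lemma \ref{Lemma:24} gives that $P_{\cB_0^{\cC}}=i_!\circ i^*$ is $m'$-Gorenstein for some $m'\leq n$; the argument in the proof of Lemma \ref{Lemma:24} only manipulates the finitely presented right $\cC$-modules $D(\cC(c,-))$ and their evaluations at $i_!i^*(F)=\bigoplus_{c\in\cC}\cC(c,-)\otimes_k F(c)$, and so applies verbatim with coefficients in $\cB_0$ rather than in $k\text{-}\Md$. By Theorem \ref{Theorem:2} this means
\[
\dim_{\relGproj{P}{\cB_0^{\cC}}}(\cB_0^{\cC}) = m' \leq n.
\]

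Finally I would invoke Proposition \ref{Proposition:5} for the adjunction $i_!\dashv i^*$, whose target category is $\prod_{c\in\cC}\cB_0$. Every object of this product is a tuple of objects of $\cB_0$, and a componentwise Gorenstein projective resolution shows that its Gorenstein projective dimension is at most $\glGpdim\cB_0 = m$; hence $\glGpdim\bigl(\prod_{c\in\cC}\cB_0\bigr)\leq m$. Proposition \ref{Proposition:5} then yields
\[
\glGpdim(\cC'\otimes\cC)\text{-}\Md = \glGpdim\cB_0^{\cC} \;\leq\; \glGpdim\Bigl(\prod_{c\in\cC}\cB_0\Bigr) + \dim_{\relGproj{P}{\cB_0^{\cC}}}(\cB_0^{\cC}) \;\leq\; m+n.
\]
In particular the left side is finite, so $\cC'\otimes\cC$ is left Gorenstein, and it is left $p$-Gorenstein with $p\leq m+n$.

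The two points requiring care — and the only places where anything could go wrong — are that $(\cC'\otimes\cC)\text{-}\Md\simeq(\cC'\text{-}\Md)^{\cC}$ is genuinely an \emph{exact} $k$-linear equivalence (so that the two global Gorenstein projective dimensions coincide, and so that $\cB_0$ inherits enough projectives to license Proposition \ref{Proposition:5}), and that the computation underlying Lemma \ref{Lemma:24} is insensitive to the coefficient category. I expect the former to be the main obstacle to write out cleanly, though it is a standard currying argument and introduces no new ideas; everything else is bookkeeping that reduces the claim to Lemma \ref{Lemma:24} and Proposition \ref{Proposition:5}.
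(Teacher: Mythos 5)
Your proposal is correct and takes essentially the same route as the paper, whose proof is exactly the one-line combination of Proposition \ref{Proposition:5}, Theorem \ref{Nakayama functor on functor categories}, and Lemma \ref{Lemma:24} applied to the identification $(\cC'\otimes_k\cC)\text{-}\Md\simeq(\cC'\text{-}\Md)^{\cC}$; you merely spell out the bookkeeping (the currying equivalence, $\glGpdim\prod_{c\in\cC}\cB_0\leq m$, and the passage from Lemma \ref{Lemma:24} to $\dim_{\relGproj{P}{\cB_0^{\cC}}}(\cB_0^{\cC})\leq n$ via Theorem \ref{Theorem:2}) that the paper leaves implicit. Note only that your caution about Lemma \ref{Lemma:24} being ``insensitive to the coefficient category'' is unnecessary, since that lemma is already stated for an arbitrary $k$-linear abelian category $\cB$.
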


\begin{proof}
This follows from Proposition \ref{Proposition:5}, Theorem \ref{Nakayama functor on functor categories}, and Lemma \ref{Lemma:24} applied to $(\cC'\otimes_k \cC)\text{-}\Md= (\Md\text{-}\cC')^{\cC}$.
\end{proof}

It would be interesting to know when the equality $p=m+n$ in Theorem \ref{Theorem:6} holds.

\begin{Remark}\label{Remark:3}
Following the conventions in \cite{DSS17}, we say that the category $\cC$ has a Serre functor relative to $k$ if there exists an equivalence $S\colon \cC\to \cC$ together with a natural isomorphism
\[
\cC(c_1,c_2)\cong D(\cC(c_2, S(c_1)))
\] 
for all $c_1,c_2\in \cC$. This implies in particular that $P_{\cC\text{-}\Md}$ is $0$-Gorenstein. Theorem \ref{Theorem:6} therefore gives a partial generalization of \cite[Theorem 4.6]{DSS17}. 
\end{Remark}

\subsection{Monic representations of a quiver}\label{Monic representations of a quiver}

Let $Q=(Q_0,Q_1,s,t)$ be a quiver (not necessarily finite) such that for each vertex $i\in Q_0$ there are only finitely many paths starting in $i$ and only finitely many paths ending in $i$. Let $\cC=kQ$ be the $k$-linearization of $Q$. Obviously, $kQ$ is a Hom-finite and locally bounded category. An object $F\in \cB^{kQ}$ is a representation of $Q$ over $\cB$, given by the datum $F=(F(i),f_{\alpha}, i\in Q_0, \alpha\in Q_1)$, where $F(i)\in \cB$ and $f_{\alpha}\colon F(s(\alpha ))\to F(t(\alpha ))$ are morphisms in $\cB$. A morphism 
\[
\phi\colon (F(i),f_{\alpha}, i\in Q_0, \alpha\in Q_1)\to (F'(i),g_{\alpha}, i\in Q_0, \alpha\in Q_1)
\]
 is given by morphisms $\phi_i\colon F(i)\to F'(i)$ for each $i\in Q_0$, such that the diagram
\begin{equation*}
\begin{tikzpicture}[description/.style={fill=white,inner sep=2pt}]
\matrix(m) [matrix of math nodes,row sep=2.5em,column sep=5.0em,text height=1.5ex, text depth=0.25ex] 
{ F(s(\alpha )) & F(t(\alpha )) \\
  F'(s(\alpha )) & F'(t(\alpha )) \\};
\path[->]
(m-1-1) edge node[auto] {$f_{\alpha}$} 	    													    (m-1-2)
(m-2-1) edge node[auto] {$g_{\alpha}$} 	    													    (m-2-2)

(m-1-1) edge node[auto] {$\phi_{s(\alpha )}$} 	    								    (m-2-1)
(m-1-2) edge node[auto] {$\phi_{t(\alpha )}$} 	    									  (m-2-2);
\end{tikzpicture}
\end{equation*} 
commutes for each $\alpha\in Q_1$. We let $kQe_i$ and $e_ikQ$ denote the representable functors $kQ(i,-)$ and $kQ(-,i)$.

\begin{Definition}\label{Definition:13}
A representation $F=(F(i),f_{\alpha}, i\in Q_0, \alpha\in Q_1)$ is \emphbf{monic} if 
\[
(f_{\alpha})_{t(\alpha)=i}\colon \bigoplus_{t(\alpha)=i}F(s(\alpha))\to F(i)
\]
is a monomorphism for all $i\in Q_0$.
\end{Definition} 

Let $\Mon (Q,\cB )$ denote the subcategory of $\cB^{kQ}$ consisting of the monic representations. It was considered in \cite{LZ13} for $Q$ a finite acyclic quiver, $k$ a field, and $\cB =\Lambda\text{-}\md$ the category of finite dimensional modules over a finite dimensional algebra $\Lambda$. It was also considered in \cite{EHS13} for $Q$ a left rooted quiver and $\cB = \Lambda\text{-}\Md$ for $\Lambda$ an arbitrary ring. In both cases it is used to give a description of the Gorenstein projective objects in $\cB^{kQ}$. We recover this description using the theory we have developed.

\begin{Proposition}\label{Proposition:9}
The following holds:
\begin{enumerate}
	\item\label{Proposition:9,1} The endofunctor $P_{kQ\text{-}\Md}$ is $m$-Gorenstein where $m\leq 1$;
	\item\label{Proposition:9,2} A representation $F\in \cB^{kQ}$ is monic if and only if it is Gorenstein $P_{\cB^{kQ}}$-projective.
\end{enumerate}  
\end{Proposition}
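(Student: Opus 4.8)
The plan is to handle the two parts separately: part \ref{Proposition:9,1} is a direct projective-resolution computation feeding into Theorem \ref{Gorenstein locally bounded Hom-finite}, and part \ref{Proposition:9,2} is a derived-functor criterion resting on it.

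For part \ref{Proposition:9,1}, I would exhibit a projective resolution of length at most one of the right $\cC$-module $D(\cC(c,-))$. Since $\cC=kQ$ is the free $k$-linear category on $Q$, every right $\cC$-module $M$ whose values $M(c')$ are finitely generated projective over $k$ admits the standard resolution
\[
0\to \bigoplus_{\alpha\in Q_1} kQ(-,s(\alpha))\otimes_k M(t(\alpha)) \xrightarrow{\partial} \bigoplus_{j\in Q_0} kQ(-,j)\otimes_k M(j) \to M\to 0,
\]
which is exact when evaluated at each object because every positive-length path of $Q$ factors uniquely through its last arrow. Taking $M=D(\cC(c,-))$, Hom-finiteness makes each $M(j)=D(\cC(c,j))$ finitely generated projective over $k$, so the two nonzero terms are projective right $\cC$-modules (images of $k$-projectives under the left adjoint of evaluation at $j$), while local boundedness makes the direct sums finite, so the sequence lies in $\md\text{-}\cC$. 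Hence $\pdim D(\cC(c,-))\leq 1$, and the same bound for $D(\cC(-,c))$ follows by replacing $Q$ with $Q\op$. Theorem \ref{Gorenstein locally bounded Hom-finite} then yields that $P_{kQ\text{-}\Md}$ is Iwanaga-Gorenstein of dimension $m\leq 1$.

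For part \ref{Proposition:9,2}, I would first reduce to a vanishing statement. By part \ref{Proposition:9,1} and Lemma \ref{Lemma:24} the endofunctor $P_{\cB^{kQ}}$ is $m$-Gorenstein with $m\leq 1$, so Theorem \ref{Theorem:2.5} part \ref{Theorem:2.5:2} gives $\relGproj{P}{\cB^{kQ}}=\{F\mid L_i\nu(F)=0\text{ for all }i>0\}$, which collapses to $\{F\mid L_1\nu(F)=0\}$ since $L_{\geq 2}\nu=0$. It therefore suffices to show that $L_1\nu(F)=0$ if and only if $F$ is monic. As evaluation at a vertex $i\in Q_0$ is exact, $L_1\nu(F)(i)\cong \Tor_1^{\cC}(D(\cC(i,-)),F)$; tensoring the length-one resolution above (for $M=D(\cC(i,-))$) with $F$ and using $kQ(-,j)\otimes_{\cC}F\cong F(j)$ together with the tensor identity identifies $L_1\nu(F)(i)$ with the kernel of
\[
\partial\otimes_{\cC}F\colon \bigoplus_{\alpha\in Q_1} D(\cC(i,t(\alpha)))\otimes_k F(s(\alpha)) \to \bigoplus_{j\in Q_0} D(\cC(i,j))\otimes_k F(j).
\]
The crux is to identify this kernel with $\Ker\Phi_i$, where $\Phi_i=(f_\alpha)_{t(\alpha)=i}\colon\bigoplus_{t(\alpha)=i}F(s(\alpha))\to F(i)$ is the map of Definition \ref{Definition:13}. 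The differential $\partial$ has two types of matrix entry: the dual of post-composition with $\alpha$ on the factor $D(\cC(i,-))$, sending the $\alpha$-summand into the target summand $j=s(\alpha)$, and the structure morphism $f_\alpha$ on the factor $F$, sending it into $j=t(\alpha)$. Since the finiteness hypothesis forces $Q$ to be acyclic, for an arrow $\alpha$ with $t(\alpha)=i$ the post-composition entry vanishes (it would factor through a positive-length path $i\to i$), so $\partial$ restricts to $\Phi_i$ on those summands; a filtration by path length out of $i$ then shows $\partial$ is injective on the remaining summands, whence $\Ker(\partial\otimes_{\cC}F)\cong\Ker\Phi_i$. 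Granting this, $L_1\nu(F)=0$ if and only if every $\Phi_i$ is monic, i.e. if and only if $F$ is monic. I expect this triangularity bookkeeping for the differential of the standard resolution to be the main obstacle.

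As an independent check of the inclusion $\relGproj{P}{\cB^{kQ}}\subseteq \Mon(Q,\cB)$, there is a clean argument avoiding the computation: each $P$-projective $kQ(i,-)\otimes_k B$ is monic, since at every vertex its defining map is the inclusion of the positive-length paths, a split $k$-monomorphism preserved by $-\otimes_k B$; monic representations are closed under direct summands and under subobjects (pointwise monomorphisms force each $\Phi_j$ to be monic); and any Gorenstein $P$-projective $F=\Ker(A_0\to A_{-1})$ is a subrepresentation of the $P$-projective $A_0$. This corroborates one direction and isolates the remaining content as the converse, for which the kernel identification of the previous paragraph is the decisive step.
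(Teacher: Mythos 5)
Your part (i) is correct: the standard Koszul-type resolution over the free category $kQ$ does give $\pdim D(kQ(c,-))\leq 1$, and it is a legitimate alternative to the paper's route (the paper instead resolves each simple $S_i$ by the sequence \eqref{Equation:5} and filters $D(kQe_i)$ by simples). Your final paragraph also correctly settles the inclusion $\relGproj{P_{\cB^{kQ}}}{\cB^{kQ}}\subseteq \Mon(Q,\cB)$. The genuine gap is in the converse, namely in the claimed identification $L_1\nu(F)(i)=\Ker(\partial\otimes_{\cC}F)\cong \Ker \Phi_i$: this is false. Take $Q\colon 1\xrightarrow{\alpha}2\xrightarrow{\beta}3$, fix $0\neq B\in \cB$, and let $F=(B\xrightarrow{1_B}B\xrightarrow{0}0)$. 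Then $\Phi_2=1_B$, so $\Ker\Phi_2=0$; but in your complex for $i=2$ the source is $D(kQ(2,2))\otimes_k F(1)\oplus D(kQ(2,3))\otimes_k F(2)\cong B\oplus B$, the only nonzero target summand is $j=2$, namely $D(kQ(2,2))\otimes_k F(2)\cong B$, and $\partial\otimes_{\cC}F$ is $(x,y)\mapsto x\pm y$, whose kernel is $B\neq 0$. So $\Tor_1^{kQ}(D(kQe_2),F)\cong B\neq \Ker\Phi_2$. What you overlooked: you correctly checked that for an arrow $\alpha$ with $t(\alpha)=i$ the entry into $j=s(\alpha)$ vanishes by acyclicity, but for an arrow $\beta$ with $s(\beta)=i$ the entry $\mathrm{id}\otimes D(\beta\circ -)$ lands in the summand $j=i$, that is in $D(kQ(i,i))\otimes_k F(i)\cong F(i)$ — precisely the target of $\Phi_i$. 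Hence $\partial\otimes_{\cC}F$ does not decompose as $\Phi_i\oplus(\text{rest})$, and no path-length triangularity rescues the bookkeeping, because the degree-preserving part of the differential is built from the structure maps $f_\alpha$, which need not be injective; injectivity on the complementary summands (itself false in general) would in any case not compute the kernel of a map whose summands interfere in a common target.

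The statement that is actually true, and the one the paper proves, is only the simultaneous-vanishing equivalence. Pointwise, $\Tor_1^{kQ}(D(kQe_i),F)$ is an iterated extension of the kernels $\Ker\Phi_j$ for vertices $j$ downstream of $i$ (in the example above, $\Tor_1^{kQ}(D(kQe_2),F)\cong \Ker\Phi_3$); the pointwise identity that does hold is $\Tor_1^{kQ}(S_i,F)\cong\Ker\Phi_i$, obtained by tensoring \eqref{Equation:5} with $F$ as in \eqref{Equation:8}. The paper then compares the two families $\{D(kQe_i)\}$ and $\{S_i\}$: the short exact sequence \eqref{Equation:7} (whose right-hand sum, after the passage to $Q\op$, runs over the arrows adjacent to $i$) gives one implication, and the finite filtration \eqref{Equation:6} of $D(kQe_i)$ by simples gives the other, both via the long exact sequences of Lemma \ref{Lemma:23}. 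Your argument can be repaired by substituting this two-family comparison (or an induction on the length of the longest path out of $i$, which is finite by the standing hypothesis on $Q$) for the kernel identification; as written, the decisive step of the implication ``monic $\implies$ Gorenstein $P_{\cB^{kQ}}$-projective'' fails.
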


\begin{proof}
Fix a vertex $i\in Q_0$, and let $S_i\in \Md\text{-}kQ$ be the representation 
$$
S_i(j)=
\begin{cases}
k & \text{if } i=j \\
0 & \text{if } i\neq j.
\end{cases} 
$$
We have a projective resolution of $S_i$ given by
\begin{equation}\label{Equation:5}
0\to \bigoplus_{t(\alpha) =i}e_{s(\alpha )}kQ\to e_i kQ\to S_i\to 0
\end{equation}
where the morphism $e_{s(\alpha )}kQ\to e_i kQ$ is induced from $\alpha\colon s(\alpha)\to i$. This shows that $\pdim S_i\leq 1$ for all $i\in Q_0$. Also, $D(kQe_i)$ has a filtration 
\begin{equation}\label{Equation:6}
0=M_0\subset M_1\subset \cdots \subset M_n=D(kQe_i)
\end{equation}
in $\md\text{-} kQ$ such that $M_{i+1}/M_i\cong S_{j_i}$ for vertices $j_0,j_1,\cdots j_{n-1}\in Q_0$. Therefore, we get that $\pdim D(kQe_i) \leq 1$ for all $i\in Q_0$. Dually, the same argument applied to $Q\op$ shows that $\pdim D(e_ikQ)\leq 1$ for all $i\in Q_0$. This proves that the endofunctor $P_{kQ\text{-}\Md}$ is $m$-Gorenstein where $m\leq 1$.

We now describe the objects which are Gorenstein $P_{\cB^{kQ}}$-projective. By Lemma \ref{Lemma:24} we know that $P_{\cB^{kQ}}$ is Iwanaga-Gorenstein of dimension $0$ or $1$. Hence, by Theorem \ref{Theorem:2} and Theorem \ref{Theorem:2.5} part \ref{Theorem:2.5:2} the Gorenstein $P_{\cB^{kQ}}$-projective functors are precisely the functors $F\in \cB^{kQ}$ such that 
\[
\Tor^{kQ}_1(D(kQe_i),F)=0
\]
 for all $i\in Q_0$. Now for all $i\in Q_0$ we have an exact sequence
\begin{equation}\label{Equation:7}
0\to S_i \to D(kQe_i) \to \bigoplus_{t(\alpha) =i}D(kQe_{s(\alpha )})\to 0
\end{equation}
obtained by applying $D(-)$ to the sequence \eqref{Equation:5} with $Q$ replaced by $Q\op$. Hence, we get that 
\begin{multline*}
\Tor^{kQ}_1(D(kQe_i),F)=0 \text{ }\forall i\in Q_0 
 \implies \Tor^{kQ}_1(S_i,F)=0 \text{ }\forall i\in Q_0
\end{multline*}
by tensoring $F$ with the sequence in \eqref{Equation:7} and using Lemma \ref{Lemma:23}. Conversely, from the filtration \eqref{Equation:6} we get that 
\begin{multline*}
\Tor^{kQ}_1(S_i,F)=0 \text{ }\forall i\in Q_0
 \implies \Tor^{kQ}_1(D(kQe_i),F)=0 \text{ }\forall i\in Q_0
\end{multline*}
by repeated use of Lemma \ref{Lemma:23}. Hence, $F$ is Gorenstein $P_{\cB^{kQ}}$-projective if and only if $\Tor^{kQ}_1(S_i,F)=0$ for all $i\in Q_0$. Tensoring the sequence \eqref{Equation:5} with $F$ gives the exact sequence
\begin{equation}\label{Equation:8}
0\to \Tor^{kQ}_1(S_i,F)\to \bigoplus_{t(\alpha) =i}F(s(\alpha))\xrightarrow{(f_{\alpha})_{t(\alpha)=i}} F(i)\to S_i\otimes_{kQ}F\to 0.
\end{equation}
Hence, $F$ is Gorenstein $P_{\cB^{kQ}}$-projective if and only if it is monic.
\end{proof}

\begin{Proposition}\label{Proposition:10}
Assume $\cB$ has enough projectives. The following holds:
\begin{enumerate}
	\item\label{Proposition:10,1} A functor $F=(F(i),f_{\alpha}, i\in Q_0, \alpha\in Q_1)\in \cB^{kQ}$ is Gorenstein projective if and only if it is monic and the cokernel of the map
\[
(f_{\alpha})_{t(\alpha)=i}\colon  \bigoplus_{t(\alpha)=i}F(s(\alpha))\to F(i)
\]
is Gorenstein projective in $\cB$ for all $i\in Q_0$; 
\item\label{Proposition:10,2} If $F$ is Gorenstein projective in $\cB^{kQ}$, then $F(i)$ is Gorenstein projective in $\cB$ for all $i\in Q_0$. 
\end{enumerate} 
\end{Proposition}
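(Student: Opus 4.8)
The plan is to combine the abstract lifting result of Subsection \ref{Lifting Gorenstein projectives} with the concrete resolutions \eqref{Equation:5}, \eqref{Equation:7} and \eqref{Equation:8} already produced in the proof of Proposition \ref{Proposition:9}. First I would record that, by Proposition \ref{Proposition:9} part \ref{Proposition:9,1} together with Lemma \ref{Lemma:24}, the endofunctor $P_{\cB^{kQ}}$ is Iwanaga-Gorenstein, so Corollary \ref{Gorenstein adjoint pairs lifts Gorenstein projectives} gives $\Gproj(\relGproj{P_{\cB^{kQ}}}{\cB^{kQ}})=\Gproj(\cB^{kQ})$. Unwinding the definition $\Gproj(\relGproj{P_{\cB^{kQ}}}{\cB^{kQ}})=(i^*\circ\nu)^{-1}(\Gproj(\cB))\cap\relGproj{P_{\cB^{kQ}}}{\cB^{kQ}}$ and using Proposition \ref{Proposition:9} part \ref{Proposition:9,2} (monic $=$ Gorenstein $P_{\cB^{kQ}}$-projective), this says that $F\in\Gproj(\cB^{kQ})$ if and only if $F$ is monic and $\nu(F)(i)=D(kQe_i)\otimes_{kQ}F\in\Gproj(\cB)$ for every $i\in Q_0$, since $i^*\nu(F)=(\nu(F)(i))_{i\in Q_0}$. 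Because \eqref{Equation:8} identifies $\Coker\big((f_\alpha)_{t(\alpha)=i}\big)$ with $S_i\otimes_{kQ}F$, part \ref{Proposition:10,1} reduces to the claim that, for monic $F$, one has $\nu(F)(i)\in\Gproj(\cB)$ for all $i$ if and only if $S_i\otimes_{kQ}F\in\Gproj(\cB)$ for all $i$.

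I would prove this equivalence from the sequence obtained by tensoring \eqref{Equation:7} with $F$. Since $F$ is monic it is Gorenstein $P_{\cB^{kQ}}$-projective, so (as in the proof of Proposition \ref{Proposition:9}) $\Tor^{kQ}_1(S_i,F)=L_1\nu(F)(i)=0$ and $\Tor^{kQ}_1(D(kQe_j),F)=0$ for all $i,j$; hence Lemma \ref{Lemma:23} turns \eqref{Equation:7} into the short exact sequence
\[
0\to S_i\otimes_{kQ}F\to \nu(F)(i)\to \bigoplus_{t(\alpha)=i}\nu(F)(s(\alpha))\to 0 .
\]
For the forward direction, if all $\nu(F)(j)\in\Gproj(\cB)$ then $S_i\otimes_{kQ}F$ is the kernel of an epimorphism between Gorenstein projectives, hence lies in $\Gproj(\cB)$ because $\Gproj(\cB)$ is resolving (Lemma \ref{Lemma:1} and Proposition \ref{Proposition:2}). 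For the converse I would induct on $d(i)$, the length of the longest path ending in $i$, which is finite by the standing hypothesis that only finitely many paths end in $i$. If $i$ is a source the last term vanishes and $\nu(F)(i)\cong S_i\otimes_{kQ}F\in\Gproj(\cB)$; otherwise each predecessor $s(\alpha)$ has strictly smaller depth, so $\bigoplus_{t(\alpha)=i}\nu(F)(s(\alpha))\in\Gproj(\cB)$ by induction (a finite sum by local boundedness), and closure under extensions yields $\nu(F)(i)\in\Gproj(\cB)$.

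For part \ref{Proposition:10,2} I would again exploit monicity: tensoring \eqref{Equation:5} with $F$ and using $\Tor^{kQ}_1(S_i,F)=0$ collapses \eqref{Equation:8} into the short exact sequence
\[
0\to \bigoplus_{t(\alpha)=i}F(s(\alpha))\to F(i)\to S_i\otimes_{kQ}F\to 0 ,
\]
in which $S_i\otimes_{kQ}F\in\Gproj(\cB)$ by part \ref{Proposition:10,1}. A second induction on $d(i)$ then finishes: for a source $F(i)\cong S_i\otimes_{kQ}F$, and in general the finite direct sum of the $F(s(\alpha))\in\Gproj(\cB)$ is Gorenstein projective, so $F(i)$ is an extension of two Gorenstein projectives. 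The main obstacle is the equivalence in part \ref{Proposition:10,1}: one must translate between the object $\nu(F)(i)=D(kQe_i)\otimes_{kQ}F$ that the abstract theory controls and the cokernel $S_i\otimes_{kQ}F$ in the statement, the only bridge being the resolution \eqref{Equation:7} together with the vanishing of the relevant $\Tor$-groups, and the well-foundedness needed to run both inductions is precisely the finiteness of paths built into the hypothesis on $Q$.
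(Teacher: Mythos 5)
Your proposal is correct and follows the paper's reduction exactly up to the key equivalence: like the paper, you invoke Proposition \ref{Proposition:9} and Corollary \ref{Gorenstein adjoint pairs lifts Gorenstein projectives} to reduce to ``$F$ monic and $D(kQe_i)\otimes_{kQ}F\in\Gproj(\cB)$ for all $i$,'' and your forward direction (tensoring \eqref{Equation:7} with $F$ and using that $\Gproj(\cB)$ is closed under kernels of epimorphisms) is the paper's argument verbatim. Where you genuinely diverge is in the converse of part \ref{Proposition:10,1} and in part \ref{Proposition:10,2}: the paper never inducts over the vertices of $Q$. Instead it uses the finite filtration \eqref{Equation:6} of $D(kQe_i)$ by the simples $S_{j}$, and the analogous filtration of $e_ikQ$, tensors each layer with $F$ (exactness being preserved since $\Tor^{kQ}_1(S_{j},F)=0$), and concludes by closure of $\Gproj(\cB)$ under extensions; this argument is local to each vertex $i$ and needs no well-founded order on $Q_0$. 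Your replacement, a well-founded induction on path length using the short exact sequences coming from \eqref{Equation:7} and \eqref{Equation:5}, is equally valid, since the standing hypothesis on $Q$ bounds path lengths and excludes oriented cycles, and local boundedness makes the relevant sums finite. One caveat, though: since \eqref{Equation:7} is obtained by dualizing \eqref{Equation:5} for $Q\op$, the index $t(\alpha)=i$ there refers to arrows of $Q\op$, so in the notation of $Q$ the third term is really $\bigoplus_{s(\alpha)=i}D(kQe_{t(\alpha)})$ --- a quick check on $Q=\bA_2$ shows $D(kQe_1)\cong e_2kQ\not\cong S_1$, so the literal reading of \eqref{Equation:7} fails at sources. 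Consequently your induction for the converse of part \ref{Proposition:10,1} should run on the longest path \emph{starting} at $i$, with sinks rather than sources as base case; your induction in part \ref{Proposition:10,2} is unaffected, as \eqref{Equation:5} and \eqref{Equation:8} are indexed over arrows of $Q$ ending at $i$. This is harmless --- the hypothesis on $Q$ is symmetric in sources and targets, so the induction works after flipping the direction --- but it illustrates the trade-off: the paper's filtration argument is insensitive to this orientation bookkeeping, while your quiver-theoretic induction makes the combinatorics explicit at the price of having to get it right.
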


\begin{proof}
We know by Corollary \ref{Gorenstein adjoint pairs lifts Gorenstein projectives} and Proposition \ref{Proposition:9} that $F$ is Gorenstein projective if and only if it is monic and $D(kQe_i)\otimes_{kQ}F\in \Gproj(\cB)$ for all $i\in Q_0$. Assume $F$ is monic, and consider the exact sequence \eqref{Equation:7}. Tensoring with $F$ gives an exact sequence
\[
0\to S_i\otimes_{kQ}F \to D(kQe_i)\otimes_{kQ}F \to (\bigoplus_{t(\alpha) =i}D(kQe_{s(\alpha )}))\otimes_{kQ}F\to 0
\]
since $\Tor^1_{kQ}(\bigoplus_{t(\alpha) =i}D(kQe_{s(\alpha )}),F) =0$. Hence, we get that 
\[
D(kQe_i)\otimes_{kQ}F\in \Gproj(\cB) \text{ }\forall i\in Q_0 \implies S_i\otimes_{kQ} F\in \Gproj(\cB) \text{ }\forall i\in Q_0
\]
since $\Gproj(\cB )$ is closed under kernels of epimorphisms. Also, from the filtration in  \eqref{Equation:6} we have an exact sequence
\[
0\to M_i\to M_{i+1}\to S_{j_i}\to 0
\]
for each $0\leq i\leq n-1$. Tensoring this with $F$ gives an exact sequence
\[
0\to M_i\otimes_{kQ}F\to M_{i+1}\otimes_{kQ}F\to S_{j_i}\otimes_{kQ}F\to 0
\]
since $\Tor^1_{kQ}(S_{j_i},F)=0$. Therefore,
\[
S_i\otimes_{kQ}F\in \Gproj(\cB) \text{ }\forall i\in Q_0 \implies D(kQe_i)\otimes_{kQ} F\in \Gproj(\cB) \text{ }\forall i\in Q_0
\]
since $\Gproj(\cB )$ is closed under extensions. Hence, a functor $F\in \cB^{kQ}$ is Gorenstein projective if and only if it is monic and $S_i\otimes_{kQ}F\in \Gproj(\cB)$ for all $i\in Q_0$. By the exact sequence in \eqref{Equation:8} we see that $S_i\otimes_{kQ}F$ is the cokernel of the map 
\[
(f_{\alpha})_{t(\alpha)=i}\colon  \bigoplus_{t(\alpha)=i}F(s(\alpha))\to F(i)
\]
 and the claim follows.

For statement \ref{Proposition:10,2}, note that $e_ikQ$ has a filtration $0=M_0\subset M_1\subset \cdots \subset M_{n'}=e_ikQ$ such that $M_{i+1}/M_i\cong S_{j'_i}$ for $j'_0,j'_1,\cdots j'_{n'-1}\in Q_0$. Hence, if $F$ is Gorenstein projective, then $e_ikQ\otimes_{kQ}F\cong F(i)$ is Gorenstein projective for all $i\in Q_0$. This proves the claim.   
\end{proof}

\subsection{More examples}\label{More examples}

In this subsection we calculate the Gorenstein projective objects in examples for representation of quiver with relations over $\cB$.

\begin{Example}\label{Example:12}
Let $\cC$ be the $k$-linear category generated by the quiver 
\[
\cdots \xrightarrow{d_{i+2}} c_{i+1}\xrightarrow{d_{i+1}} c_{i}\xrightarrow{d_{i}} \cdots
\]
with vertex set $\{c_i \mid i\in \bZ/n \bZ\}$ and relations $d_i\circ d_{i+1}=0$. The category $\cB^{\cC}$ can be identified with $n$-periodic complexes over $\cB$ (for $n=0$ this is just unbounded complexes over $\cB$). It was shown in \cite[Proposition 4.12]{DSS17} that $\cC$ has a relative Serre functor $S$ given by $S(c_i)=c_{i-1}$ and $S(d_i)=d_{i-1}$. Therefore, the endofunctor $P_{\cC\text{-}\Md}$ is $0$-Gorenstein. Hence, by Theorem \ref{Theorem:2.5} we get that $\relGproj{P_{\cB^{\cC}}}{\cB^{\cC}} = \cB^{\cC}$. If $\cB$ has enough projectives, then the Gorenstein projective objects in $\cB^{\cC}$ are precisely the functors $F$ such that 
\[
D\cC(c_{i+1},-)\otimes_{\cC}F \cong \cC(-,c_{i})\otimes_{\cC}F \cong F(c_{i})\in \Gproj(\cB )
\]
for all $c_i\in \cC$. Note that for $n=0$ this recovers the description obtained in \cite[Theorem 2.2]{YL11}. Also, if we put $\cX = \relGproj{P_{\cB^{\cC}}}{\cB^{\cC}}$ and $\cY = \Proj(\cB)$ in Theorem \ref{Theorem:3} we recover the result that the collection of $n$-periodic complexes over $\cB$ with projective components form a Frobenius exact category.  
\end{Example}

\begin{Example}\label{Example:13}
Let $\cC$ be the $k$-linear category generated by the quiver 
\[
c_n\xrightarrow{d_n}c_{n-1}\xrightarrow{d_{n-1}}\cdots \xrightarrow{d_{1}}c_0
\]
with relations $d_{i}\circ d_{i+1}=0$ for $1\leq i\leq n-1$. Then $D(\cC(c_i,-))\cong \cC(-,c_{i-1})$ in $\Md\text{-}\cC$ for $1\leq i\leq n$  and $D(\cC(-,c_i))\cong \cC(c_{i+1},-)$ in $\cC\text{-} \Md$ for $0\leq i\leq n-1$. Furthermore, we have an exact sequence
\begin{equation}\label{Equation:9}
0\to \cC(-,c_n)\to \cC(-,c_{n-1})\to \cdots \to \cC(-,c_{0})\to D(\cC(c_{0},-))\to 0
\end{equation}
in $\Md\text{-} \cC$ and an exact sequence
\[
0\to \cC(c_{0},-)\to \cC(c_{1},-)\to \cdots \to \cC(c_n,-)\to D(\cC(-,c_n))\to 0
\]
in $\cC\text{-} \Md$. Hence, the endofunctor $P_{\cC\text{-}\Md}$ is $n$-Gorenstein. Let $F\in \cB^{\cC}$ be a functor. We can identify $F$ with a complex
\[
F(c_n)\xrightarrow{f_n} F(c_{n-1})\xrightarrow{f_{n-1}} \cdots \xrightarrow{f_{1}} F(c_0).
\]
with $n+1$ terms. Tensoring the sequence \eqref{Equation:9} with $F$ gives a sequence 
\[
F(c_n)\xrightarrow{f_n} F(c_{n-1})\xrightarrow{f_{n-1}} \cdots \xrightarrow{f_{1}} F(c_0) \to D\cC(c_{0},-)\otimes_{\cC}F.
\]
By Theorem \ref{Theorem:2.5} part \ref{Theorem:2.5:2} we get that $F$ is Gorenstein $P_{\cB^{\cC}}$-projective if and only if $\Tor^{kQ}_j(D\cC(c_{0},-),F)=0$ for all $1\leq j\leq n$. Since 
\begin{align*} 
& \Tor^{kQ}_j(D\cC(c_{0},-),F)= \Ker f_{j}/\im f_{j+1} \quad \text{for} \quad 1\leq j\leq n-1 \\
& \Tor^{kQ}_n(D\cC(c_{0},-),F)=\Ker f_n
\end{align*}
 it follows that  $F$ is Gorenstein $P_{\cB^{\cC}}$-projective if and only if the sequence 
\begin{equation}\label{Equation:10}
0\to F(c_n)\xrightarrow{f_n} F(c_{n-1})\xrightarrow{f_{n-1}} \cdots \xrightarrow{f_{1}} F(c_0)
\end{equation}
is exact. Now assume $\cB$ has enough projectives. Then $\Gproj(\relGproj{P_{\cB^{\cC}}}{\cB^{\cC}}) = \Gproj(\cB^{\cC})$ by Corollary \ref{Gorenstein adjoint pairs lifts Gorenstein projectives}. Therefore, the Gorenstein projective objects in $\cB^{\cC}$ are precisely the functors $F$ such that sequence \eqref{Equation:10} is exact and 
\begin{align*}
& D(\cC(c_i,-))\otimes_{\cC}F\cong F(c_{i-1})\in \Gproj (\cB) \quad \text{for } 1\leq i \leq n \\
& D(\cC(c_0,-))\otimes_{\cC}F \cong \Coker f_{1}\in \Gproj(\cB).
\end{align*}
\end{Example}

\begin{Example}\label{Example:14}
Let $\cC$ be the $k$-linear category generated by the quiver
\begin{equation*}
\begin{tikzpicture}[description/.style={fill=white,inner sep=2pt}]
\matrix(m) [matrix of math nodes,row sep=2.5em,column sep=5.0em,text height=1.5ex, text depth=0.25ex] 
{ c_1 & c_2 \\
  c_3 & c_4 \\};
\path[->]
(m-1-1) edge node[auto] {$\alpha$} 	    													    (m-1-2)
(m-2-1) edge node[auto] {$\gamma$} 	    													    (m-2-2)

(m-1-1) edge node[auto] {$\mu$} 	    								    (m-2-1)
(m-1-2) edge node[auto] {$\beta$} 	    									  (m-2-2);
\end{tikzpicture}
\end{equation*} 
with relations $\beta \circ \alpha = \gamma \circ \mu$. A functor $F\in \cB^{\cC}$ is just a commutative diagram in $\cB$. Note that $\cC(-,c_4)\cong D\cC(c_1,-)$. Also, there are exact sequences
\begin{align*}
& 0\to \cC(-,c_3)\xrightarrow{\gamma\circ -} \cC(-,c_4)\to D\cC(c_2,-)\to 0 \\
& 0\to \cC(-,c_2)\xrightarrow{\beta\circ -} \cC(-,c_4)\to D\cC(c_3,-)\to 0
\end{align*}
and
\begin{multline*}
0\to \cC(-,c_1)\xrightarrow{\begin{bmatrix}-(\alpha\circ -) \\  \mu\circ -\end{bmatrix}} \cC(-,c_2)\oplus \cC(-,c_3)\xrightarrow{\begin{bmatrix}\beta\circ -&  \gamma\circ -\end{bmatrix}} \cC(-,c_4) \\
\to D\cC(c_4,-)\to 0
\end{multline*}
in $\Md\text{-} \cC$. Since $\cC$ is isomorphic to $\cC\op$ the same holds for $\cC\op$. Hence, the endofunctor $P_{\cC\text{-}\Md}$ is $2$-Gorenstein. By Theorem \ref{Theorem:2.5} part \ref{Theorem:2.5:2} we get that $F\in \cB^{\cC}$ is Gorenstein $P_{\cB^{\cC}}$-projective if and only if $\Tor^{\cC}_j(D(\cC(c_i,-)),F)=0$ for $1\leq j\leq 2$ and $1\leq i\leq 4$. Tensoring $F$ with the exact sequences above shows that $F\in \cB^{\cC}$ is Gorenstein $P_{\cB^{\cC}}$-projective if and only if 
\[
F(c_3)\xrightarrow{F(\gamma)} F(c_4) \quad  \text{and} \quad  F(c_2)\xrightarrow{F(\beta)} F(c_4)
\]
are monomorphisms and the diagram
\begin{equation*}
\begin{tikzpicture}[description/.style={fill=white,inner sep=2pt}]
\matrix(m) [matrix of math nodes,row sep=2.5em,column sep=5.0em,text height=1.5ex, text depth=0.25ex] 
{ F(c_1) & F(c_2) \\
  F(c_3) & F(c_4) \\};
\path[->]
(m-1-1) edge node[auto] {$F(\alpha)$} 	    													    (m-1-2)
(m-2-1) edge node[auto] {$F(\gamma)$} 	    													    (m-2-2)

(m-1-1) edge node[auto] {$F(\mu)$} 	    								    							(m-2-1)
(m-1-2) edge node[auto] {$F(\beta)$} 	    									  						(m-2-2);
\end{tikzpicture}
\end{equation*}
is a pullback square. If $\cB$ has enough projectives, then a functor $F\in \cB^{\cC}$ is Gorenstein projective if and only if it is Gorenstein $P_{\cB^{\cC}}$-projective and 
\begin{align*}
& D\cC(c_1,-)\otimes_{\cC}F \cong F(c_4)\in \Gproj(\cB) \\
& D\cC(c_2,-)\otimes_{\cC}F\cong \Coker (F(c_3)\xrightarrow{F(\gamma)}F(c_4))\in \Gproj(\cB) \\
& D\cC(c_3,-)\otimes_{\cC}F \cong \Coker (F(c_2)\xrightarrow{F(\beta)}F(c_4))\in \Gproj(\cB) \\
& D\cC(c_4,-)\otimes_{\cC}F \cong \Coker (F(c_2)\oplus F(c_3)\xrightarrow{\begin{bmatrix}F(\beta )&  F(\gamma )\end{bmatrix}}F(c_4))\in \Gproj(\cB). 
\end{align*}
\end{Example}

\bibliography{Mybibtex}
\bibliographystyle{plain} 

\end{document}